\documentclass[11pt,reqno]{NumPDEsArticle}

\usepackage[english]{babel}
\usepackage{csquotes}
\usepackage{microtype}
\usepackage{accents}
\DeclareRobustCommand{\starred}[1]{\accentset{\star}{#1}}
\usepackage{enumitem}
\usepackage{amsmath, amssymb}
\usepackage{orcidlink}
\usepackage[basicdelimiters]{NumPDEsMacros}
\newcommand{\Eta}{{\sf H}}
\newcommand{\Zeta}{{\sf Z}}

\newcommand{\coarse}{H}
\newcommand{\fine}{h}

\newcommand{\kk}{{\underline{k}}}

\newcommand{\kmax}{{\underline{k}}}

\newcommand{\lmax}{{\underline{\ell}}}
\newcommand{\qctr}{q_{\textup{ctr}}}
\newcommand{\lctr}{\lambda}

\newcommand{\thetaopt}{\const{\theta}{opt}}
\newcommand{\thetamark}{\starred{\theta}}

\newcommand{\Caux}{\const{C}{aux}}

\newcommand{\ellu}{{\underline{\ell}}}
\newcommand{\Ccost}{\const{C}{cost}}
\newcommand{\Cstabeta}{\Cstab}
\newcommand{\Cstabzeta}{C_{\textup{stab}}^{\zeta}}
\newcommand{\Creleta}{\Crel}
\newcommand{\Crelzeta}{C_{\textup{rel}}^{\zeta}}
\newcommand{\Cdreleta}{\Cdrel}
\newcommand{\Cmoneta}{\Cmon}

\newcommand{\Ceqv}{\const{C}{eqv}}

\newcommand\f{\boldsymbol{f}}

\DeclareMathOperator*{\esssup}{\operatorname{ess\,sup}}
\DeclareMathOperator*{\essinf}{\operatorname{ess\,inf}}
\newcommand{\cost}{\operatorname{\textnormal{\texttt{cost}}}}

\newcommand{\myspace}{\;\,}
\newcommand{\eqpad}{\mathrel{\myspace = \myspace}}
\newcommand{\leqpad}{\mathrel{\myspace\leq\myspace}}
\newcommand{\lesssimpad}{\mathrel{\myspace\lesssim\myspace}}

\newcommand{\lepad}{\mathrel{\myspace < \myspace}}

\definecolor{TUBlue}{HTML}{006699}
\definecolor{TUGreen}{HTML}{007E71}
\definecolor{TUGrey}{HTML}{646363}
\definecolor{TUMagenta}{HTML}{BA4682}
\definecolor{TUYellow}{HTML}{E18922}
\definecolor{TULightGrey}{RGB}{238,238,238}
\definecolor{TURed}{RGB}{252,25,25}

\definecolor{hiBlue}{rgb}{0.0239,0.6287,0.8038}
\definecolor{hiYellow}{rgb}{0.9598,0.9218,0.0948}
\definecolor{hiGreen}{rgb}{0.6099,0.7473,0.4337}
\def\hb#1{{\colorbox{hiBlue}{#1}}}
\def\hy#1{{\colorbox{hiYellow}{#1}}}
\def\hg#1{{\colorbox{hiGreen}{#1}}}

\definecolor{psychedelicpurple}{rgb}{0.87, 0.0, 1.0}
\definecolor{cobalt}{rgb}{0, 0.28, 0.67}

\usepackage{graphicx}
\usepackage[labelformat=simple]{subcaption}

\usepackage{tikz}
\usetikzlibrary{calc,matrix}

\usepackage{pgfplots}
\pgfplotsset{compat=newest}
\usepackage{pgfplotstable}

\usepackage{tabularray}
\UseTblrLibrary{booktabs}
\SetTblrInner[booktabs]{
  colsep=4pt,
  leftsep=4pt,
  rightsep=4pt,
  abovesep = 0ex,
  belowsep = 0ex,
  rowsep = 0pt
}
\usepackage{diagbox}

\definecolor{pyBlue}{HTML}{1f77b4}
\definecolor{pyRed}{HTML}{d62728}
\definecolor{pyGreen}{HTML}{2ca02c}
\definecolor{pyOrange}{HTML}{ff7f0e}
\definecolor{pyPurple}{HTML}{9467bd}
\definecolor{pyYellow}{HTML}{bcbd22}
\definecolor{pyGrey}{HTML}{7f7f7f}
\definecolor{pyCyan}{HTML}{17becf}
\definecolor{pyBrown}{HTML}{8c564b}
\definecolor{pyPink}{HTML}{e377c2}

\colorlet{col1}{pyGrey}
\colorlet{col2}{pyBlue}
\colorlet{col3}{pyRed}
\colorlet{col4}{pyGreen}
\colorlet{col5}{pyOrange}
\colorlet{col6}{pyPurple}
\colorlet{col7}{pyCyan}
\colorlet{col8}{pyYellow}
\colorlet{col9}{pyBrown}
\colorlet{col10}{pyPink}

\definecolor{2col1}{HTML}{332288}
\definecolor{2col2}{HTML}{88CCEE}
\definecolor{2col3}{HTML}{44AA99}
\definecolor{2col4}{HTML}{117733}
\definecolor{2col5}{HTML}{999933}
\definecolor{2col6}{HTML}{DDCC77}
\definecolor{2col7}{HTML}{CC6677}
\definecolor{2col8}{HTML}{882255}
\definecolor{2col9}{HTML}{AA4499}

\pgfplotsset{%
    degdefault/.style = {%
        mark = *,%
        mark size = 2pt,%
        every mark/.append style = {solid},%
        gray,%
        every mark/.append style = {fill = gray!60!white}%
    },%
    p1L1/.style = {%
        degdefault,%
        mark = square*,%
        mark size = 1.66pt,%
        col1,%
        every mark/.append style = {fill = col1!40!white}%
    },%
    p2L1/.style = {%
        degdefault,%
        mark = triangle*,%
        mark size = 2.75pt,%
        col1,%
        every mark/.append style = {fill = col1!40!white}%
    },%
    p3L1/.style = {%
        degdefault,%
        mark = pentagon*,%
        mark size = 2.2pt,%
        col1,%
        every mark/.append style = {fill = col1!40!white}%
    },%
    p4L1/.style = {%
        degdefault,%
        col1,%
        every mark/.append style = {fill = col1!40!white}%
    },%
    p1L2/.style = {%
        degdefault,%
        col2,%
        every mark/.append style = {fill = col2!60!white}%
    },%
    p1L3/.style = {%
        degdefault,%
        mark = triangle*,%
        mark size = 2.75pt,%
        col3,%
        every mark/.append style = {fill = col3!60!white}%
    },%
    p2L2/.style = {%
        degdefault,%
        mark = square*,%
        mark size = 1.66pt,%
        col4,%
        every mark/.append style = {fill = col4!60!white}%
    },%
    p2L3/.style = {%
        degdefault,%
        mark = pentagon*,%
        mark size = 2.2pt,%
        col5,%
        every mark/.append style = {fill = col5!60!white}%
    },%
    p3L2/.style = {%
        degdefault,%
        mark = diamond*,%
        mark size = 2.75pt,%
        col6,%
        every mark/.append style = {fill = col6!60!white}%
    },%
    p3L3/.style = {%
        degdefault,%
        mark = halfcircle*,%
        every mark/.append style = {rotate = 315},%
        mark size = 2pt,%
        col7,%
        every mark/.append style = {fill = col7!60!white}%
    },%
    p4L2/.style = {%
        degdefault,%
        mark = halfsquare*,%
        every mark/.append style = {rotate = 135},%
        mark size = 2.35pt,%
        col8,%
        every mark/.append style = {fill = col8!60!white}%
    },%
    p4L3/.style = {%
        degdefault,%
        mark = halfdiamond*,%
        every mark/.append style = {rotate = 180},%
        mark size = 2.75pt,%
        col9,%
        every mark/.append style = {fill = col9!60!white}%
    },%
    direct/.style = {%
        dashed,%
        every mark/.append style = {%
            black!50!white,%
            fill = black!20!white
        }%
    },%
    reference/.style = {%
        dashed,%
    },%
    iterative/.style = {%
            solid%
    },%
}

\pgfplotsset{%
    colormap={parula}{%
        rgb=(0.2081,0.1663,0.5292)
        rgb=(0.2116,0.1898,0.5777)
        rgb=(0.2123,0.2138,0.627)
        rgb=(0.2081,0.2386,0.6771)
        rgb=(0.1959,0.2645,0.7279)
        rgb=(0.1707,0.2919,0.7792)
        rgb=(0.1253,0.3242,0.8303)
        rgb=(0.0591,0.3598,0.8683)
        rgb=(0.0117,0.3875,0.882)
        rgb=(0.006,0.4086,0.8828)
        rgb=(0.0165,0.4266,0.8786)
        rgb=(0.0329,0.443,0.872)
        rgb=(0.0498,0.4586,0.8641)
        rgb=(0.0629,0.4737,0.8554)
        rgb=(0.0723,0.4887,0.8467)
        rgb=(0.0779,0.504,0.8384)
        rgb=(0.0793,0.52,0.8312)
        rgb=(0.0749,0.5375,0.8263)
        rgb=(0.0641,0.557,0.824)
        rgb=(0.0488,0.5772,0.8228)
        rgb=(0.0343,0.5966,0.8199)
        rgb=(0.0265,0.6137,0.8135)
        rgb=(0.0239,0.6287,0.8038)
        rgb=(0.0231,0.6418,0.7913)
        rgb=(0.0228,0.6535,0.7768)
        rgb=(0.0267,0.6642,0.7607)
        rgb=(0.0384,0.6743,0.7436)
        rgb=(0.059,0.6838,0.7254)
        rgb=(0.0843,0.6928,0.7062)
        rgb=(0.1133,0.7015,0.6859)
        rgb=(0.1453,0.7098,0.6646)
        rgb=(0.1801,0.7177,0.6424)
        rgb=(0.2178,0.725,0.6193)
        rgb=(0.2586,0.7317,0.5954)
        rgb=(0.3022,0.7376,0.5712)
        rgb=(0.3482,0.7424,0.5473)
        rgb=(0.3953,0.7459,0.5244)
        rgb=(0.442,0.7481,0.5033)
        rgb=(0.4871,0.7491,0.484)
        rgb=(0.53,0.7491,0.4661)
        rgb=(0.5709,0.7485,0.4494)
        rgb=(0.6099,0.7473,0.4337)
        rgb=(0.6473,0.7456,0.4188)
        rgb=(0.6834,0.7435,0.4044)
        rgb=(0.7184,0.7411,0.3905)
        rgb=(0.7525,0.7384,0.3768)
        rgb=(0.7858,0.7356,0.3633)
        rgb=(0.8185,0.7327,0.3498)
        rgb=(0.8507,0.7299,0.336)
        rgb=(0.8824,0.7274,0.3217)
        rgb=(0.9139,0.7258,0.3063)
        rgb=(0.945,0.7261,0.2886)
        rgb=(0.9739,0.7314,0.2666)
        rgb=(0.9938,0.7455,0.2403)
        rgb=(0.999,0.7653,0.2164)
        rgb=(0.9955,0.7861,0.1967)
        rgb=(0.988,0.8066,0.1794)
        rgb=(0.9789,0.8271,0.1633)
        rgb=(0.9697,0.8481,0.1475)
        rgb=(0.9626,0.8705,0.1309)
        rgb=(0.9589,0.8949,0.1132)
        rgb=(0.9598,0.9218,0.0948)
        rgb=(0.9661,0.9514,0.0755)
        rgb=(0.9763,0.9831,0.0538)
    }
}

\newcommand\drawslopetriangle[4][ST]{
  \pgfplotsextra
  {%
    \pgfkeys{/pgf/fpu=true}%
    \pgfmathsetmacro\leftcoord{#3}%
    \pgfmathsetmacro\rightcoord{10*#3}%
    \pgfmathsetmacro\bottomcoord{#4}%
    \pgfmathsetmacro\topcoord{10^(#2)*#4}%
    \pgfkeys{/pgf/fpu=false}%

    \coordinate (#1-BL) at (axis cs:\leftcoord,\bottomcoord);
    \coordinate (#1-BR) at (axis cs:\rightcoord,\bottomcoord);
    \coordinate (#1-TL) at (axis cs:\leftcoord,\topcoord);

    \shadedraw[%
      bottom color = black!20,%
      middle color = black!5,%
      top color    = white,%
      draw         = black%
    ]
      (#1-TL) -- (#1-BL) node[midway, left=-2pt] {\scriptsize\(#2\)}
      -- (#1-BR) node[midway, below=-2pt] {\scriptsize\(1\)} -- (#1-TL);
  }
}
\newcommand\drawswappedslopetriangle[4][SST]{
  \pgfplotsextra
  {%
    \pgfkeys{/pgf/fpu=true}%
    \pgfmathsetmacro\leftcoord{#3/10}%
    \pgfmathsetmacro\rightcoord{#3}%
    \pgfmathsetmacro\topcoord{#4}%
    \pgfmathsetmacro\bottomcoord{10^(-#2)*#4}%
    \pgfkeys{/pgf/fpu=false}%

    \coordinate (#1-TR) at (axis cs:\rightcoord,\topcoord);
    \coordinate (#1-BR) at (axis cs:\rightcoord,\bottomcoord);
    \coordinate (#1-TL) at (axis cs:\leftcoord,\topcoord);

    \shadedraw[%
      bottom color = black!20,%
      middle color = black!5,%
      top color    = white,%
      draw         = black%
    ]
      (#1-BR) -- (#1-TR) node[midway, right=-2pt] {\scriptsize\(#2\)}
      -- (#1-TL) node[midway, above=-2pt] {\scriptsize\(1\)} -- (#1-BR);
  }
}

\newlength{\convergenceWidth}
\newlength{\meshWidth}
\title[Quasi-optimal complexity of iterative Galerkin methods]{Quasi-optimal complexity of iterative Galerkin methods\\ driven by an elliptic reconstruction error estimator}

\author{Maximilian Brunner~\orcidlink{0000-0003-0636-1491}}
\author{Gregor Gantner~\orcidlink{0000-0002-0324-5674}}
\author{Christoph Lietz~\orcidlink{0009-0005-1857-0954}}
\author{Dirk Praetorius~\orcidlink{0000-0002-1977-9830}}

\address{TU Wien, Institute of Analysis and Scientific Computing, Wiedner Hauptstr. 8--10/E101/4, 1040 Vienna, Austria}

\email{maximilian.brunner@asc.tuwien.ac.at}
\email{christoph.lietz@asc.tuwien.ac.at \quad {\normalfont{(corresponding author)}}}
\email{dirk.praetorius@asc.tuwien.ac.at}

\address{University of Twente, Department of Applied Mathematics, P.O.\ Box 217, 7500 AE Enschede, The Netherlands}
\email{gregor.gantner@utwente.nl}

\keywords{adaptive finite element method, quasilinear PDEs, elliptic reconstruction, a posteriori error estimation, iterative linearization, convergence analysis, optimal convergence rates}
\subjclass[2010]{65N30, 65N50, 65N15, 65Y20, 41A25}
\thanks{%
  This research was funded in whole or in part by the Austrian Science Fund (FWF)
  [\href{https://www.fwf.ac.at/en/research-radar/10.55776/I6802}{10.55776/I6802} and \href{https://www.fwf.ac.at/en/research-radar/10.55776/PAT3446525}{10.55776/PAT3446525}]
  and the German Research Foundation (DFG) under the individual research grant 545527047.
  Additionally, Christoph Lietz is supported by the Vienna School of Mathematics.
  For open access purposes, the authors have applied a CC BY public copyright license
  to any author accepted manuscript version arising from this submission.
}

\begin{document}

\maketitle
\thispagestyle{fancy}

\begin{abstract}
  We study an iterative Galerkin method for quasilinear elliptic problems in the Browder--Minty setting.
  The resulting discrete nonlinear systems are solved by linearization via a (damped) Zarantonello iteration.
  Unlike prior work, adaptive mesh refinement is driven by an elliptic reconstruction error estimator, which is natural in the sense that the \textsl{a~posteriori} bounds for the linearization and discretization errors are well separated.
  For this setting, we present the first comprehensive convergence analysis of the corresponding algorithm.
  We prove unconditional full R-linear convergence of a suitable quasi-error that combines linearization and discretization errors.
  For sufficiently small adaptivity parameters, we further establish optimal convergence rates with respect to the number of degrees of freedom and quasi-optimal complexity, i.e., optimal convergence rates with respect to the overall computational cost.
  Numerical experiments underpin the theoretical findings.
\end{abstract}

\section{Introduction}

\subsection{Motivation and context}

This paper concerns the numerical approximation of the solution to the quasilinear elliptic partial differential equation (PDE)
\begin{equation}\label{eq:PDEstrong}
  -\div (\mu(|\nabla \starred{u}|^2) \, \nabla \starred{u}) = f - \div \vec{f}
  \;\:\text{in }\Omega\quad\text{ subject to }\quad
  \starred{u} = 0\;\:\text{on }\partial\Omega,
\end{equation}
where $\mu\colon[0, +\infty)\to [0, +\infty)$ is a continuously differentiable scalar nonlinearity and $\Omega \subset \R^d$ is a bounded polyhedral Lipschitz domain with $d\in\N$.
More precisely, let $\XX\coloneq H_0^1(\Omega)$ with its dual space $\XX'=H^{-1}(\Omega)$ and duality pairing $\dual{\,\cdot\,}{\,\cdot\,} \coloneqq \dual{\,\cdot\,}{\,\cdot\,}_{\XX' \times \XX}$ be equipped with an equivalent norm $\enorm{ \, \cdot \,} \coloneqq a(\, \cdot \,, \, \cdot \,)^{1/2}$ that is induced by a scalar product $a(\, \cdot \,, \, \cdot \,)$.
Given $f \in L^2(\Omega)$ and $\f \in [L^2(\Omega)]^d$, we aim to approximate the weak solution $\starred{u}\in \XX$ to~\eqref{eq:PDEstrong}, which satisfies
\begin{equation}\label{eq:nonlinearPDE}
  \dual{\AA \starred{u}}{v}
  \coloneq
  \dual{\mu(|\nabla \starred{u}|^2) \nabla \starred{u}}{\nabla v}_{L^2(\Omega)}
  =
  \dual{f}{v}_{L^2(\Omega)} + \dual{\f}{\nabla v}_{L^2(\Omega)}
  \eqqcolon \dual{F}{v}
  \quad \text{for all } v \in \XX.
\end{equation}
The nonlinearity $\mu$ is assumed to fulfill, for constants $0<\widetilde{\alpha}\leq \widetilde{L}$, the growth condition
\begin{equation}\label{assumption:mu}
  \widetilde{\alpha} \, (t - s) \leq \mu(t^2) t - \mu(s^2) s \leq \widetilde{L} \, (t - s)
  \quad \text{for all } 0 \leq s \leq t.
\end{equation}
Under condition~\eqref{assumption:mu}, the (possibly) nonlinear operator $\AA \colon \XX \to \XX'$ fits into the Browder--Minty setting; cf.~\cite[Theorem~26.A]{zeidler}.
That is, $\AA$ is \emph{strongly monotone} and \emph{Lipschitz continuous}, i.e., there exist $0<\alpha\leq L$ such that
\begin{equation}\label{assumption:BM}
  \alpha \,
  \enorm{v-w}^2
  \le
  \dual{\AA v - \AA w}{v - w}
  \quad\text{and}\quad
  \dual{\AA v - \AA w}{u}
  \le L \, \enorm{v-w} \, \enorm{u}
  \quad \text{ for all } u, v, w \in \XX.
\end{equation}
We note that for $\enorm{\,\cdot\,}=\norm{\nabla (\,\cdot \,)}_{L^2(\Omega)}$, there holds $\alpha=\widetilde{\alpha}$ and $L=\widetilde{L}$, whereas $\alpha$ and $L$ depend also on norm equivalence for general $\enorm{\,\cdot\,}$; see Appendix~\ref{appendix:sharpLipschitz}.
This setting yields existence and uniqueness of the weak solution $\starred{u}\in\XX$ to~\eqref{eq:nonlinearPDE}.

This work presents an \emph{iterative Galerkin method} for approximating the solution $\starred{u}$ to~\eqref{eq:nonlinearPDE} according to the principle \emph{discretize first, linearize second}, i.e., the nonlinear PDE is first discretized and the resulting finite-dimensional nonlinear system is solved via iterative linearization.
The discretization employs a conforming adaptive finite element method (AFEM) in the spirit of~\cite{cw2017}.
The numerical treatment of strongly monotone and Lipschitz continuous problems, steered by residual-based or equilibrated-flux \textsl{a~posteriori} error estimators associated with~\eqref{eq:nonlinearPDE}, has significantly matured in the last years; see, e.g.,~\cite{gmz2011, gmz2012, ev2013, hw2020:ailfem, ghps2021, hpw2021,hpsv2021, h2023, MV23, MPS25, BPS25, harnist:hal-04033438} and the references therein.
The main novelty of this paper is a thorough convergence analysis of an algorithm that is steered by an \emph{elliptic reconstruction} estimator.
This estimator originates from the literature on parabolic problems~\cite{MN03} and is a main tool in the \textsl{a posteriori} error analysis presented in~\cite{cw2017, hw2020:convergence}.
As explained in section~\ref{subsec:reconstructionIntro} below, it yields an \textsl{a~posteriori} bound in which the linearization and discretization errors are well separated and both contributions are computable.
The reconstruction estimator arises naturally as the residual-based error estimator associated with the current linearized problem and thus may also be employed to drive adaptive mesh refinement in Uzawa-type methods used, e.g., in optimization.
Such methods adopt the reverse order \emph{linearize first, discretize second}, i.e., the nonlinear PDE is linearized at the continuous level and the resulting linearized problem is subsequently discretized.

\subsection{Iterative Galerkin method and elliptic reconstruction}
\label{subsec:reconstructionIntro}
The discretization employs conforming Lagrange finite element spaces $\XX_\coarse\subset\XX$ associated to a triangulation $\TT_H$ of $\Omega$; see section~\ref{subsection:mesh-refinement} below.
Then, the discrete formulation of~\eqref{eq:nonlinearPDE} seeks $\starred{u}_\coarse \in \XX_\coarse$ such that
\begin{equation}\label{eq:nonlinearPDE:discrete}
  \dual{\AA \starred{u}_\coarse}{v_\coarse}
  =
  \dual{F}{v_\coarse}
  \quad \text{ for all } v_\coarse \in \XX_\coarse.
\end{equation}
Existence and uniqueness of both $\starred{u} \in \XX$ and $\starred{u}_\coarse \in \XX_\coarse$ follow from the Browder--Minty theorem, cf.~\cite[Theorem~26.A]{zeidler}.
Iterative Galerkin methods solve~\eqref{eq:nonlinearPDE:discrete} by linearization via fixed-point or Newton-type iterations; see, e.g.,~\cite{hw2020:convergence, ghps2021, hpw2021} for a Zarantonello approach,~\cite{gmz2011, hpw2021, DS25} for a Ka\v{c}anov approach, or~\cite{hw2020:convergence, h2023, BBP25} for Newton's method.

In this work, we focus on a (damped) Zarantonello iteration~\cite{zarantonello1960} with damping parameter $\delta>0$.
Starting from an initial guess $u_H^0\in\XX_H$, the iteration generates a sequence of approximations $(u_\coarse^k)_{k\in\N_0}$.
For $k\geq 1$, new iterates are defined by $u_\coarse^k\coloneq u_\coarse^{k-1}+\delta \starred{z}_H^k$, where the update $\starred{z}_H^k=\starred{z}_H[u_\coarse^{k-1}] \in \XX_\coarse$ solves the linear problem
\begin{equation}\label{eq:iterativeGalerkin}
  a(\starred{z}_H^k, v_\coarse)
  =
  \dual{F-\AA u_\coarse^{k-1}}{v_\coarse}
  \quad \text{ for all }v_\coarse \in \XX_\coarse.
\end{equation}
For a sufficiently small damping parameter $0<\delta<2\alpha/L^2$ depending only on the constants in~\eqref{assumption:BM}, the iteration contracts the error in the $a(\, \cdot \,, \, \cdot \,)^{1/2}$-induced energy norm $\enorm{\,\cdot\,}$ and ensures convergence to $\starred{u}_\coarse \in \XX_\coarse$; cf.
\ section~\ref{subsection:linearization} below for details.

The key ingredient of our analysis is the following elliptic reconstruction~associated with~\eqref{eq:iterativeGalerkin}.
For $k\geq 1$, find $\starred{z}^k \in \XX$ such that
\begin{equation}\label{eq:ellipticReconstruction}
  a(\starred{z}^k, v)
  =
  \dual{F-\AA u_\coarse^{k-1}}{v}
  \quad \text{ for all }v \in \XX.
\end{equation}
The term \emph{reconstruction} reflects the fact that $\starred{z}_H^k \in \XX_\coarse$ from~\eqref{eq:iterativeGalerkin} is the Galerkin approximation of $\starred{z}^k \in \XX$ from~\eqref{eq:ellipticReconstruction}.
This is used to adaptively steer the mesh refinement.
We employ the \emph{residual}-based \textsl{a~posteriori} error estimator $\zeta_\coarse(u_\coarse^{k-1}; \starred{z}_H^k)$ corresponding to the linearized problems~\eqref{eq:iterativeGalerkin}--\eqref{eq:ellipticReconstruction}; see section~\ref{subsection:reconstruction} below.
Here, the first argument specifies the linearization point and the second argument is the discrete update whose discretization error is estimated.
Following~\cite[Proposition~2.2]{cw2017}, the update-based reconstruction estimator allows to control the error of the iterate $u_\coarse^k$ to the exact solution $\starred{u}$ of~\eqref{eq:nonlinearPDE} via
\begin{equation}\label{eq:aposteriorioEstimate}
  \enorm{\starred{u} - u_\coarse^k}
  \le
  C \, \big( \enorm{\starred{z}_H^k}
  +
  \zeta_\coarse(u_\coarse^{k-1}; \starred{z}_H^k) \big) \quad \text{ for some generic constant }
  C>0.
\end{equation}
The contribution $\enorm{\starred{z}_H^k}$ is a measure of the linearization error and $\zeta_\coarse(u_\coarse^{k-1}; \starred{z}_H^k)$ bounds the discretization error of the linearized problem.
Both terms in this \textsl{a~posteriori} error estimate are computable and the proposed algorithm aims to balance these two contributions; see section~\ref{subsection:ideaErrorControl} below for further details and a heuristic motivation.

\subsection{Main results and contributions}

The present paper provides a rigorous convergence analysis of an adaptive iterative Galerkin method that employs the reconstruction estimator $\zeta_\coarse(u_\coarse^{k-1}; \starred{z}_H^k)$, which has been missing in the literature so far.
In particular, Theorem~\ref{theorem:linearConvergence} below exploits the inherent energy structure of problem~\eqref{eq:PDEstrong} and establishes unconditional full R-linear convergence of the adaptive algorithm, i.e., contraction of a suitably defined quasi-error for arbitrary choices of adaptivity parameters.
More precisely, the quasi-error $\Zeta_\ell^k$ consists of the linearization error and the discretization error on a triangulation $\TT_\ell$ and satisfies a quasi-contraction with respect to the lexicographic order $\abs{\ell,k}$ of index pairs appearing in the algorithm, i.e., there exist $0<\qlin<1$ and $\Clin>0$ such that
\begin{equation*}
  \Zeta_{\ell}^{k}
  \le
  \Clin \, \qlin^{|\ell, k|-|\ell',k'|} \, \Zeta_{\ell'}^{k'}
  \quad \text{for all } (\ell,k), (\ell',k') \text{ with } |\ell',k'| \leq |\ell, k|.
\end{equation*}
Moreover, we prove in Theorem~\ref{theorem:optimalRates} below that, for sufficiently small adaptivity parameters, the algorithm steers the mesh refinement and the number of linearization iterations to guarantee optimal algebraic convergence rates with respect to the degrees of freedom as well as the overall computational cost.
That is, if the exact solution $\starred{u}$ is theoretically approximable with algebraic rate $r>0$ (in terms of the usual approximation classes~\cite{ckns2008,axioms}), then this rate is achieved by the algorithm with respect to the complexity, i.e.,
\begin{equation*}
  \Zeta_{\ell}^{k}
  \leq
  C \, \big(\cost(\ell,k)\big)^{-r}
  \quad \text{ for all $(\ell,k)$ and some $C>0$,}
\end{equation*}
where $\cost(\ell,k)$ denotes the (idealized) computational cost up to step $(\ell,k)$ in the algorithm.

\subsection{Organization of the paper}

The paper is organized as follows.
Section~\ref{section:iterativeLinearization} discusses the discretization of the model problem, the Zarantonello linearization and the considered error estimators, and elaborates their properties.
In section~\ref{section:adaptiveAlgorithm}, we present the algorithm of the iterative Galerkin method.
Section~\ref{section:convergenceAnalysis} states the main theorems of our convergence analysis: Unconditional full R-linear convergence in Theorem~\ref{theorem:linearConvergence} and optimal convergence rates with respect to the computational cost in Theorem~\ref{theorem:optimalRates}.
Their proofs are found in full detail in section~\ref{section:proofs}.
Section~\ref{section:experiments} concludes the paper with numerical experiments that underpin the theoretical results.

\section{Setting}\label{section:iterativeLinearization}

This section introduces the main ingredients of the adaptive algorithm: mesh-refinement, the Zarantonello iteration, and the elliptic reconstruction estimator together with its properties.

\subsection{Discretization}
\label{subsection:mesh-refinement}

Let $\TT_0$ be a given conforming initial triangulation of $\Omega$ into compact simplices.
As mesh refinement strategy, we employ newest vertex bisection (NVB); see~\cite{affkp2013} for $d=1$,~\cite{s2008} for admissible $\TT_0$ in $d\geq 2$,~\cite{kpp2013} for arbitrary $\TT_0$ in $d=2$, and~\cite{dgs2025} for arbitrary $\TT_0$ in $d\geq 2$.
Let $\T(\TT_H)$ denote the set of all conforming triangulations that can be obtained from a triangulation $\TT_H$ through finitely many steps of NVB refinement, and, for convenience, $\T\coloneq\T(\TT_0)$.
Furthermore, given a triangulation $\TT_H\in\T$ and a set of marked elements $\MM_H\subseteq\TT_H$, we denote by $\refine(\TT_H,\MM_H)$ the coarsest NVB refinement of $\TT_H$ such that at least all marked elements are refined, i.e., $\MM_H \subseteq \TT_H\setminus \refine(\TT_H,\MM_H)$.
Finally, let $p\in\N$ be a fixed polynomial degree.
To each triangulation $\TT_H \in \T$, we associate the conforming Lagrange finite element space
\begin{equation}\label{eq:FEspace}
  \XX_\coarse \coloneqq \mathcal{S}_0^p(\TT_\coarse) \coloneqq \set{v_\coarse \in \XX \given \forall T \in \TT_\coarse\colon v_\coarse|_T  \text{ is a polynomial of degree} \le p}\subset\XX.
\end{equation}
While the convergence analysis from section~\ref{section:convergenceAnalysis} below is restricted to the lowest-order case $p=1$, the algorithmic design and the numerical experiments in section~\ref{section:experiments} are carried out for polynomial degrees $p\geq 1$.
Clearly, $\XX_H \subset \XX$ is a closed subspace.
Since NVB is employed, the discrete spaces are nested, i.e., $\TT_h \in \T(\TT_H)$ implies $\XX_H \subseteq \XX_h$.

\subsection{Standard \textsl{a~posteriori} error estimator}
\label{sec:residuadlEstimator}

In the usual AFEM context, the discretization is steered by the standard residual-based \textsl{a~posteriori} error indicator $\eta_H$ associated to the nonlinear problems~\eqref{eq:nonlinearPDE} and~\eqref{eq:nonlinearPDE:discrete}; see, e.g.,~\cite{ghps2021, hpw2021, MPS25}.
Additionally, we suppose that $\vec f|_T\in [H^1(T)]^d$ for all $T\in\TT_0$ to ensure that the residual estimator below is well defined.
For a triangulation $\TT_H\in\T$ and each element $T\in\TT_H$, the local contributions then read
\begin{equation}\label{eq:residualEstimator}
  \begin{aligned}
    \eta_\coarse(T, v_H)^2
    \coloneqq
    |T|^{2/d} \, & \norm{- \div (\mu(|\nabla v_H|^2) \, \nabla v_H - \boldsymbol{f} ) - f}_{L^2(T)}^2                                                    \\
                 & + |T|^{1/d} \, \norm{\jump{[ \mu(|\nabla v_H|^2) \,\nabla v_H - \boldsymbol{f}]\cdot \boldsymbol{n}}}_{L^2(\partial T \cap \Omega)}^2
    \quad\text{for all $v_H\in\XX_H$},
  \end{aligned}
\end{equation}
where $\boldsymbol{n}$ denotes the unit normal vector on $\partial T$ and $\jump{\, \cdot \,}$ is the jump across element interfaces. The error estimator $\eta_H(v_H)$ is given by the sum of its contributions, i.e.,
\begin{equation}\label{eq:estimatorNotation}
  \eta_\coarse(\UU_\coarse, v_H)
  \coloneqq
  \biggl(\sum_{T \in \UU_\coarse} \eta_\coarse(T, v_H)^2\biggr)^{1/2}
  \quad \text{ for all } \UU_\coarse \subseteq \TT_\coarse
  \quad \text{and} \quad
  \eta_\coarse(v_H) \coloneqq \eta_\coarse(\TT_\coarse, v_H).
\end{equation}
Recall that the residual-based error estimator $\eta_H$ satisfies the following properties.

\begin{lemma}[axioms of adaptivity~{\cite[section~10.1]{axioms}}]
  \label{lemma:axioms}
  For lowest-order $p=1$, there exist constants $\Cstabeta, \Creleta, \Cdreleta,
    \Cmoneta>0$ and $0 < \qred < 1$ such that, for any
  triangulation $\TT_H \in \T$, any refinement $\TT_h
    \in\T(\TT_H)$, any subset $\UU_H \subseteq \TT_h \cap \TT_H$, and arbitrary $v_H \in \XX_H$, $v_h \in \XX_h$, the residual-based error estimator $\eta$ from~\eqref{eq:residualEstimator}--\eqref{eq:estimatorNotation} satisfies the following properties:
  \begin{enumerate}[font=\upshape, label=\textbf{\textrm{(A\arabic*)}}, ref=A\arabic*]
    \item
      \label{axiom:stability}
      \textbf{\textup{Stability:}}
      \(\mkern100mu \vert \eta_h (\UU_H; v_h) -
      \eta_H(\UU_H; v_H) \vert
      \le \Cstabeta \, \enorm{v_h - v_H}\);
    \item \label{axiom:reduction}\textbf{\textup{Reduction:}}
      \(\mkern168mu \eta_h(\TT_h \setminus \TT_H; v_H)
      \le
      \qred \, \eta_H(\TT_H \setminus \TT_h; v_H) \);
    \item
      \label{axiom:reliability}
      \textbf{\textup{Reliability:}}
      \(\mkern214mu \enorm{\starred{u} - \starred{u}_\coarse}
      \le
      \Creleta \, \eta_H(\starred{u}_\coarse)\);
    \item[\textbf{(A4)}]\refstepcounter{enumi}
      \label{axiom:discrete_reliability}
      \textbf{\textup{Discrete reliability:}}
      \(\mkern128mu
      \enorm{\starred{u}_{\fine} - \starred{u}_\coarse}
      \le
      \Cdreleta \, \eta_H(\TT_H \backslash \TT_h, \starred{u}_\coarse) \);
      \renewcommand{\theenumi}{QM}
    \item[\textbf{(QM)}]\refstepcounter{enumi}
      \label{axiom:qm}
      \textbf{\textup{Quasi-monotonicity:}}
      \(\mkern150mu \eta_h(\starred{u}_{\fine}) \le \Cmoneta \,\eta_H(\starred{u}_\coarse) \).
  \end{enumerate}
  The constant $\Cstabeta$ depends only on the uniform shape regularity of $\T$, the dimension $d$, the Lipschitz constant $L$, and the constants in the norm equivalence between $\enorm{ \, \cdot \,} = a(\, \cdot \,, \, \cdot \,)^{1/2}$ and $\norm{\nabla(\,\cdot\,)}_{L^2(\Omega)}$.
  The constants $\Creleta, \Cdreleta$ depend only on the uniform shape regularity of $\T$, the dimension $d$, the monotonicity constant $\alpha$, and the constants in the norm equivalence between $\enorm{ \, \cdot \,}$ and $\norm{\nabla(\,\cdot\,)}_{L^2(\Omega)}$.
  Furthermore, there hold $\qred = 2^{-1/(2d)}$ and $\Cmoneta \le 1 + \Cstabeta \, \Cdreleta$.
  \qed
\end{lemma}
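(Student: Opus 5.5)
Since the lemma is quoted from \cite[section~10.1]{axioms}, the plan is to verify the five properties in the standard way for the lowest-order residual estimator. The structural fact I rely on throughout is that, for $p=1$, every $v_H\in\XX_H$ has piecewise constant gradient. Hence the flux $\sigma(v_H)\coloneqq\mu(|\nabla v_H|^2)\nabla v_H$ is piecewise constant and $\div\sigma(v_H)|_T=0$. The volume residual therefore reduces to $\norm{\div\vec f+f}_{L^2(T)}$, and only the jump terms depend nonlinearly on $v_H$. A key pointwise bound follows from~\eqref{assumption:mu}, via Appendix~\ref{appendix:sharpLipschitz}:
\begin{equation*}
  |\sigma(v_h)-\sigma(v_H)|\le \widetilde L\,|\nabla(v_h-v_H)|\quad\text{a.e.}
\end{equation*}

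\textbf{Stability~\eqref{axiom:stability}.} For $T\in\UU_H\subseteq\TT_h\cap\TT_H$, the element $T$ is unrefined and its volume terms coincide. I apply the reverse triangle inequality in $\ell^2$ over $\UU_H$, and then in $L^2(\partial T)$, to bound the difference by
\begin{equation*}
  \Big(\sum_{T}|T|^{1/d}\,\norm{\jump{(\sigma(v_h)-\sigma(v_H))\cdot\boldsymbol n}}_{L^2(\partial T\cap\Omega)}^2\Big)^{1/2}.
\end{equation*}
Next I use the discrete trace inequality $|T|^{1/(2d)}\norm{w}_{L^2(\partial T)}\lesssim\norm{w}_{L^2(T)}$ for piecewise polynomials on the patch. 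The pointwise Lipschitz bound and finite patch overlap then give $\lesssim\widetilde L\norm{\nabla(v_h-v_H)}_{L^2(\Omega)}$, and norm equivalence converts this to $\enorm{\cdot}$.

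\textbf{Reduction~\eqref{axiom:reduction}.} For $v_H\in\XX_H$ on children $T'\subset T$, bisection gives $|T'|\le|T|/2$. Jumps of $\sigma(v_H)$ vanish on new interior faces, since $v_H$ is affine on $T$. Summing over children with the scaling factors $|T'|^{2/d}\le 2^{-2/d}|T|^{2/d}$ and $|T'|^{1/d}\le2^{-1/d}|T|^{1/d}$ gives the factor $2^{-1/(2d)}$ on the square root. Note that there are no data-oscillation perturbations, because $\vec f$ and $f$ enter identically.

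\textbf{Reliability and discrete reliability.} This is the main obstacle. Strong monotonicity gives
\begin{equation*}
  \alpha\enorm{\starred u-\starred u_H}^2\le\dual{F-\AA\starred u_H}{\starred u-\starred u_H}.
\end{equation*}
I then insert Galerkin orthogonality, using $\dual{F-\AA\starred u_H}{v_H}=0$, subtract a Scott--Zhang quasi-interpolant of $e\coloneqq \starred u-\starred u_H$, and integrate by parts elementwise. This bounds the residual by $\eta_H(\starred u_H)\norm{\nabla e}$ and yields~\eqref{axiom:reliability} with $\Creleta\sim1/\alpha$.

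For~\eqref{axiom:discrete_reliability}, I use $e_h=\starred u_h-\starred u_H\in\XX_h$ with a Scott--Zhang operator $J_H\colon\XX_h\to\XX_H$ that preserves $e_h$ on elements away from $\TT_H\setminus\TT_h$. The difficult point is the locality: $e_h-J_He_h$ must vanish outside the patch of refined elements, so only the indicators on $\TT_H\setminus\TT_h$, possibly enlarged to a patch, survive. I would take this standard construction from~\cite{ckns2008} and absorb the patch enlargement into the constant, as is done in~\cite[section~10.1]{axioms}.

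\textbf{Quasi-monotonicity~\eqref{axiom:qm}.} This follows formally from the other properties:
\begin{equation*}
  \eta_h(\starred u_h)\le\eta_h(\starred u_H)+\Cstabeta\enorm{\starred u_h-\starred u_H}.
\end{equation*}
Reduction with $\qred\le1$ gives $\eta_h(\starred u_H)\le\eta_H(\starred u_H)$, since unrefined elements are unchanged, and discrete reliability then bounds the last term by $\Cstabeta\Cdreleta\,\eta_H(\starred u_H)$. This gives $\Cmoneta\le1+\Cstabeta\Cdreleta$. Finally, tracking the constants shows that they depend on exactly the quantities listed in the lemma.
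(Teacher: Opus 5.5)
You follow the standard route behind the citation. The paper gives no proof of its own; it refers to \cite[section~10.1]{axioms}, and to \cite{gmz2012} for stability. Your arguments for stability, reduction, reliability and quasi-monotonicity are sound up to two small slips. First, the volume residual is $\norm{\operatorname{div}\vec{f}-f}_{L^2(T)}$, not $\norm{\operatorname{div}\vec{f}+f}_{L^2(T)}$. Second, in the reduction step you also need $[\vec{f}\cdot\boldsymbol{n}]=0$ on the new interior faces. This holds because every such face lies inside some $T_0\in\TT_0$ and $\vec{f}|_{T_0}\in[H^1(T_0)]^d$.

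\textbf{The gap is in the locality step for discrete reliability.} You cannot ``absorb the patch enlargement into the constant.'' The indicators of unrefined neighbours of refined elements are not controlled by the indicators of the refined elements. Hence a bound $\enorm{\starred{u}_h-\starred{u}_H}\lesssim\eta_H(\omega(\TT_H\setminus\TT_h),\starred{u}_H)$ does not yield (A4) with $\TT_H\setminus\TT_h$ on the right-hand side. In the axioms framework, an enlarged set $\RR_{H,h}$ is admissible only together with a cardinality bound $\#\RR_{H,h}\lesssim\#(\TT_H\setminus\TT_h)$, which is a weaker statement than the one in this lemma.

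\textbf{How to fix it.} You need $J_H$ to reproduce $e_h$ on every unrefined element $T\in\TT_H\cap\TT_h$, not only on elements away from the refined region. For $p=1$ this is easy, because $e_h|_T$ is affine on such $T$. For each vertex of an unrefined element, take an unrefined element containing it as the Scott--Zhang averaging set; the dual basis reproduces affine functions, so $J_He_h=e_h$ on all unrefined elements.

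Then $w\coloneqq e_h-J_He_h$ vanishes on every unrefined element. After elementwise integration by parts, only two kinds of terms survive: volume residuals on refined elements, and jumps across faces shared by two refined elements. On faces touching an unrefined element, the trace of $w$ is zero. All surviving terms are part of $\eta_H(\TT_H\setminus\TT_h,\starred{u}_H)$.

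The patch now appears only on the other side, through $h_T^{-1}\norm{w}_{L^2(T)}+h_T^{-1/2}\norm{w}_{L^2(\partial T)}\lesssim\norm{\nabla e_h}_{L^2(\omega_T)}$. There, finite overlap absorbs it harmlessly. This is precisely the localized upper bound of \cite{ckns2008}.

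Your derivation of (QM), including $\Cmoneta\le 1+\Cstabeta\Cdreleta$, is unaffected, since it only bounds the right-hand side of (A4) by the full estimator.
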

\begin{remark}\label{rem:axioms}
  Reduction~\eqref{axiom:reduction}, reliability~\eqref{axiom:reliability}, and discrete reliability~\eqref{axiom:discrete_reliability} hold for all polynomial degrees $p\in\N$ and within the Browder--Minty setting beyond scalar nonlinearities $\mu(|\nabla u|^2) \nabla u$ satisfying the growth condition~\eqref{assumption:mu}.
  To date, stability~\eqref{axiom:stability} is known only for scalar nonlinearities satisfying~\eqref{assumption:mu} and $p=1$; cf.~\cite{gmz2012}.
\end{remark}

\subsection{Iterative Zarantonello linearization}
\label{subsection:linearization}

For $\delta>0$, the continuous Zarantonello iteration $\Phi\colon \XX \to \XX$ is defined by $\Phi(w)\coloneq w+\delta \starred{z}[w]$, where the update $\starred{z}[w]\in\XX$ is the unique solution of
\begin{equation}\label{eq:zarantonello}
  a(\starred{z}[w],v)
  =
  \dual{F- \AA w }{v}
  \quad \text{for all } v \in \XX.
\end{equation}
For a triangulation $\TT_H\in\T$, the discrete Zarantonello iteration $\Phi_H\colon \XX_H \to \XX_H$ is given analogously by $\Phi_H(w_H)\coloneq w_H+\delta \starred{z}_H[w_H]$, where the update $\starred{z}_H[w_H]\in\XX_H$ solves
\begin{equation}\label{eq:zarantonelloDiscrete}
  a(\starred{z}_H[w_H],v_H)
  =
  \dual{F- \AA w_H }{v_H}
  \quad \text{for all } v_H \in \XX_H.
\end{equation}
By construction, $\starred{z}_H[w_H]$ is the Galerkin approximation in $\XX_H$ of the continuous update $\starred{z}[w_H]$.
The existence and uniqueness of the solutions to~\eqref{eq:zarantonello}--\eqref{eq:zarantonelloDiscrete} is ensured by the Riesz theorem.

The key feature of the Zarantonello iteration (also known as the Banach--Picard iteration) is its uniform contraction.
This result originates in~\cite{zarantonello1960}; see also~\cite[Theorem~25.B]{zeidler} as a standard reference:
For $0<\delta < 2\alpha/L^2$, there exists a contraction constant $0 < \qctr=\qctr[\delta] < 1$ such that, for all $v, w \in \XX$ and $v_H, w_H \in \XX_H$, it holds that
\begin{equation}\label{eq:zarantonelloContraction}
  \enorm{\Phi(v) - \Phi(w)}
  \le
  \qctr \, \enorm{v - w}
  \quad\text{ and }\quad
  \enorm{\Phi_H(v_H) - \Phi_H(w_H)}
  \le
  \qctr \, \enorm{v_H - w_H}.
\end{equation}
More precisely, $\qctr[\delta]^2=1-\delta(2\alpha-\delta L^2)$.
For known $\alpha$ and $L$, $\qctr[\delta]$ is minimized at $\delta=\alpha/L^2$, yielding $\qctr[\delta]^2=1-\alpha^2/L^2$.
In particular, $\Phi$ and $\Phi_H$ admit unique fixed points that coincide with the solutions $\starred{u} \in \XX$ and $\starred{u}_H \in \XX_H$ of~\eqref{eq:nonlinearPDE} and~\eqref{eq:nonlinearPDE:discrete}, respectively.

\begin{remark}[alternative linearization schemes]
    \label{rem:alternativeSchmes}
    The present analysis can also be extended to alternative linearization schemes.
    Examples include the (modified) Ka\v{c}anov iteration~\cite{Kac59,HW22} and the damped Newton method~\cite{Deu04,hw2020:convergence}.
    These iterations replace the fixed scalar product $a(\, \cdot \,, \, \cdot \,)$ in~\eqref{eq:zarantonello}--\eqref{eq:zarantonelloDiscrete} by scalar products that depend on the current linearization point.
    More precisely, for any linearization point $u\in\XX$ and for all $v,w\in\XX$, these are given by
    \begin{align*}
      a_{\textup{Ka\v{c}anov}}(u;v,w)
       & \coloneq
      \dual{\mu(\abs{\nabla u}^2)\nabla v}{\nabla w}_{L^2(\Omega)}, \\
      a_{\textup{Newton}}(u;v,w)
       & \coloneq
      \dual{
        \mu(\abs{\nabla u}^2)\nabla v
        +2\mu'(\abs{\nabla u}^2)\nabla u\nabla u^{\intercal}\nabla v
      }{\nabla w}_{L^2(\Omega)}.
    \end{align*}
    The convergence analysis of these linearization iterations exploits the energy structure of the model problem~\eqref{eq:PDEstrong} and replaces the fixed-norm contraction~\eqref{eq:zarantonelloContraction} by an appropriate energy contraction; see, e.g.,~\cite{hpw2021}.
    This energy contraction and the scalar products that depend on the linearization point can be incorporated into the present analysis with corresponding modifications.
    The damping strategies from~\cite{HW22,h2023} ensure an \emph{unconditional} energy contraction in the sense that no sufficiently small fixed damping parameter needs to be prescribed in advance.
    Hence, the restriction to a fixed sufficiently small damping parameter $\delta<2\alpha/L^2$ can be avoided by employing these alternative linearization schemes.
    To maintain the focus on the novel arguments concerning the reconstruction estimator, we restrict the presentation to the Zarantonello iteration.
  \end{remark}

\subsection{Elliptic reconstruction}
\label{subsection:reconstruction}

To derive a strong formulation of~\eqref{eq:zarantonello}, we assume that the equivalent scalar product $a(\cdot, \cdot)$ is induced by some diffusion process $\vec{A}\in W^{1,\infty}(\Omega; \R^{d\times d}_{\rm sym})$, i.e., $a(v, w)=\dual{\vec{A}\nabla v}{\nabla w}_{L^2(\Omega)}$.
For a linearization point $w_H\in\XX_H$, consider the discrete Zarantonello update $\starred{z}_H[w_H]$ solving~\eqref{eq:zarantonelloDiscrete}.
Its elliptic reconstruction $\starred{z}[w_H]$ from~\eqref{eq:zarantonello} solves in a weak sense
\begin{equation}\label{eq:linearizedProblem:auxiliary}
  -\div (\vec{A}\nabla\starred{z}[w_H])
  =
  f + \div\big(\mu(|\nabla w_H|^2) \, \nabla w_H -\boldsymbol{f}\big).
\end{equation}
We steer the adaptive mesh refinement by the residual-based error estimator $\zeta_H$ associated to the linear problem~\eqref{eq:linearizedProblem:auxiliary}.
Under the additional regularity assumption on $\vec{f}$ from
section~\ref{sec:residuadlEstimator}, its local contributions read, for $w_H\in\XX_H$ and $T\in\TT_H$,
\begin{equation}\label{eq:ellipticReconstructionEstimator}
  \begin{aligned}
     & \zeta_H(w_H; T, z_H)^2
    \coloneqq
    |T|^{2/d} \, \norm{-\div (\vec{A}\nabla z_H) - f - \div\big(\mu(|\nabla w_H|^2) \, \nabla w_H -\boldsymbol{f}\big)
    }_{L^2(T)}^2
    \\
     & \qquad\qquad\quad +
    |T|^{1/d} \,
    \norm{\jump{
        [
            \vec{A}\nabla z_H
            +
            \mu(|\nabla w_H|^2) \,\nabla w_H - \boldsymbol{f}
          ]
        \cdot \boldsymbol{n}}}_{L^2(\partial T \cap \Omega)}^2
    \qquad\text{for all $z_H\in\XX_H$}.
  \end{aligned}
\end{equation}
The notation $\zeta_H(w_H; \UU_H, z_H)$ and $\zeta_H(w_H; z_H)\coloneqq \zeta_H(w_H; \TT_H, z_H)$ is used analogously to~\eqref{eq:estimatorNotation}.

\begin{proposition}[estimator equivalence]\label{prop:estimatorEquivalence}
  For lowest-order $p=1$, there exists a constant $\Ceqv>0$ such that, for all $\delta>0$ and all $\UU_H\subseteq\TT_H$, the \textsl{a~posteriori} error estimators $\eta_H$ from~\eqref{eq:residualEstimator} and $\zeta_H$ from~\eqref{eq:ellipticReconstructionEstimator} satisfy
  \begin{equation}\label{eq:estimatorEquivalence}
    \abs{\zeta_H(w_H; \UU_H, z_H)-\eta_H(\UU_H, w_H+\delta z_H)}\leq
    (1+\delta)\,\Ceqv \, \enorm{z_H}\quad\text{for all $z_H, w_H \in \XX_H$}.
  \end{equation}
  The constant $\Ceqv$ depends only on the shape regularity of $\TT_0$, the dimension $d$, $\Cstab$, and the constants in the norm equivalence between $\enorm{ \, \cdot \,} = a(\, \cdot \,, \, \cdot \,)^{1/2}$ and $\norm{\nabla(\,\cdot\,)}_{L^2(\Omega)}$.
\end{proposition}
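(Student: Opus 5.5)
Fix $\UU_H\subseteq\TT_H$ and $z_H,w_H\in\XX_H$, and write $v_H\coloneqq w_H+\delta z_H$. The plan is to compare the two estimators elementwise by the triangle inequality. We regard each of $\zeta_H(w_H;\UU_H,z_H)$ and $\eta_H(\UU_H,v_H)$ as an $\ell^2$-norm over $T\in\UU_H$ of weighted $L^2$-norms of a volume residual and a normal-jump residual. The reverse triangle inequality in this $\ell^2$-product structure then bounds the difference by the $\ell^2$-sum of the weighted norms of the differences of the residuals.

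First, compute the pointwise differences on each $T$. The sign conventions in~\eqref{eq:ellipticReconstructionEstimator} are such that the volume residual of $\zeta_H$ is $R_\zeta=-\div(\vec A\nabla z_H)-\div(\mu(|\nabla w_H|^2)\nabla w_H-\f)-f$. The $\eta_H$-residual at $v_H$ is $R_\eta=-\div(\mu(|\nabla v_H|^2)\nabla v_H-\f)-f$. Hence
\[
R_\zeta-R_\eta=-\div(\vec A\nabla z_H)+\div\bigl(\mu(|\nabla v_H|^2)\nabla v_H-\mu(|\nabla w_H|^2)\nabla w_H\bigr),
\]
and the jump terms differ analogously by $\jump{[\vec A\nabla z_H-(\mu(|\nabla v_H|^2)\nabla v_H-\mu(|\nabla w_H|^2)\nabla w_H)]\cdot\boldsymbol n}$. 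So the difference splits into two pieces. The first is a purely linear part $-\div(\vec A\nabla z_H)$ with its jump $\jump{\vec A\nabla z_H\cdot\boldsymbol n}$. The second is the difference of the nonlinear flux evaluated at $v_H$ and $w_H$, which is exactly the quantity controlled by the stability axiom~\eqref{axiom:stability}.

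Second, bound the linear part. For $p=1$ and $\vec A\in W^{1,\infty}$, on each $T$ we have $\div(\vec A\nabla z_H)=(\div\vec A)\cdot\nabla z_H$, since $\nabla z_H$ is constant. Thus $|T|^{1/d}\norm{\div(\vec A\nabla z_H)}_{L^2(T)}\lesssim\norm{\nabla z_H}_{L^2(T)}$. For the jump, the discrete trace inequality $|T|^{1/(2d)}\norm{\cdot}_{L^2(\partial T)}\lesssim\norm{\cdot}_{L^2(\omega_T)}$ applied to piecewise polynomial data (and $\vec A$ bounded) gives $\lesssim\norm{\nabla z_H}_{L^2(\omega_T)}$. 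Summing over $T$ with finite overlap of patches and using norm equivalence yields a bound $C\enorm{z_H}$.

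Third, bound the nonlinear part. The contributions involving the nonlinear flux difference are exactly what is estimated in the proof of~\eqref{axiom:stability} (cf.~\cite{gmz2012}): the difference $|\eta_H(\UU_H;v_H)-\eta_H(\UU_H;w_H)|$ is controlled by the same local terms. I would rather invoke the argument than the statement itself, namely that the weighted $\ell^2$-sum of volume and jump norms of $\mu(|\nabla v_H|^2)\nabla v_H-\mu(|\nabla w_H|^2)\nabla w_H$ is $\le\Cstab\enorm{v_H-w_H}=\Cstab\,\delta\enorm{z_H}$.

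Combining the two bounds gives $(C+\delta\Cstab)\enorm{z_H}\le(1+\delta)\Ceqv\enorm{z_H}$ with $\Ceqv\coloneqq\max\{C,\Cstab\}$. The main obstacle is the third step. Stability~\eqref{axiom:stability} is stated as a bound on the difference of estimator values, not of the residual functions, so I must verify that its proof in~\cite{gmz2012} indeed bounds the localized flux-difference terms. The delicate point is the elementwise volume term $\div$ of the nonlinear flux for $p=1$, where the flux is piecewise constant and this divergence vanishes, so only the jumps remain. If that does not transfer cleanly, the fallback is to redo the estimate directly: on each facet the jump of the flux difference is bounded by the Lipschitz property of $t\mapsto\mu(t^2)t$ from~\eqref{assumption:mu} applied pointwise, giving $|\mu(|a|^2)a-\mu(|b|^2)b|\lesssim\widetilde L|a-b|$ for vectors $a,b$, which is then followed by the trace inequality as in the second step.
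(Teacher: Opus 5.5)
Your proposal is correct and follows essentially the same route as the paper. Both arguments apply the reverse triangle inequality in the $\ell^2$-structure and use that the elementwise divergence of the nonlinear flux vanishes for $p=1$. Both bound the $\vec{A}\nabla z_H$ volume and jump terms by $\lesssim\enorm{z_H}$ via trace/inverse estimates, and both take the localized flux-jump bound from the stability proof in \cite{gmz2012} to get $\Cstab\,\delta\,\enorm{z_H}$. Your fallback, the pointwise Lipschitz bound for $\xi\mapsto\mu(|\xi|^2)\xi$ (cf.\ Lemma~\ref{lemma:sharp-lipschitz-radial}) combined with the discrete trace inequality, is a valid self-contained substitute for citing that proof.
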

\begin{proof}
  Set $v_H\coloneq w_H+\delta z_H$.
  Since $p=1$, we have
  \begin{equation*}
    \div\big(\mu(|\nabla v_H|^2) \, \nabla v_H\big)=0=\div\big(\mu(|\nabla w_H|^2) \, \nabla w_H\big)
    \quad\text{ on each element $T\in\TT_H$}.
  \end{equation*}
  Therefore, the inverse triangle inequality yields
  \begin{equation}\label{lem:estimatoreqv:eq1}\begin{aligned}
       & \abs{\zeta_H(w_H; \UU_H, z_H)-\eta_H(\UU_H, w_H+\delta z_H)}^2=\abs{\zeta_H(w_H; \UU_H, z_H)-\eta_H(\UU_H, v_H)}^2                                   \\
       & \qquad\leq\sum_{T\in\UU_H}\Bigl( |T|^{2/d} \,\norm{\div (\vec{A}\nabla z_H)}_{L^2(T)}^2                                                              \\
       & \qquad\qquad\qquad\quad+|T|^{1/d} \,\norm{\jump{[\vec{A}\nabla z_H + \mu(|\nabla w_H|^2) \,\nabla w_H-\mu(|\nabla v_H
              |^2) \,\nabla v_H] \cdot \boldsymbol{n}}}_{L^2(\partial T \cap \Omega)}^2\Bigr)
      \\
       & \qquad \leq \sum_{T\in\UU_H}
      \Bigl(
      |T|^{2/d} \,\norm{\div (\vec{A}\nabla z_H)}_{L^2(T)}^2+|T|^{1/d} \,\norm{\jump{\vec{A}\nabla z_H \cdot \boldsymbol{n}}}_{L^2(\partial T \cap \Omega)}^2 \\
       & \qquad\qquad\qquad\quad+
      |T|^{1/d} \,\norm{\jump{ [\mu(|\nabla w_H|^2) \,\nabla w_H-\mu(|\nabla v_H|^2) \,\nabla v_H ]\cdot \boldsymbol{n}}}_{L^2(\partial T \cap \Omega)}^2\Bigr).
    \end{aligned}\end{equation}
  The proof of~\eqref{axiom:stability} in~\cite[Lemma~3.3]{gmz2012} shows that
  \begin{equation}\label{lem:estimatoreqv:eq2}\begin{aligned}
      \sum_{T\in\UU_H}|T|^{1/d} \,\norm{\jump{ [\mu(|\nabla w_H|^2) \,\nabla w_H-\mu(|\nabla v_H|^2) \,\nabla v_H ]\cdot \boldsymbol{n}}}_{L^2(\partial T \cap \Omega)}^2 & \leq \Cstabeta\, \enorm{w_H - v_H}^2.
    \end{aligned}\end{equation}
  The norm equivalence of $\enorm{ \, \cdot \,}$ and $\norm{\nabla(\,\cdot\,)}_{L^2(\Omega)}$, standard trace and inverse inequalities, the shape regularity of all triangulations, and $\vec{A}\in W^{1,\infty}(\Omega; \R^{d\times d}_{\rm sym})$ yield
  \begin{equation}\label{lem:estimatoreqv:eq3}
    \sum_{T\in\UU_H}\Bigl(
    |T|^{2/d} \,\norm{\div (\vec{A}\nabla z_H)}_{L^2(T)}^2+|T|^{1/d} \,\norm{\jump{ \vec{A}\nabla z_H \cdot \boldsymbol{n}}}_{L^2(\partial T \cap \Omega)}^2\Bigr)
    \lesssim
    \enorm{z_H}^{2}.
  \end{equation}
  The combination of~\eqref{lem:estimatoreqv:eq1}--\eqref{lem:estimatoreqv:eq3} concludes the proof of~\eqref{eq:estimatorEquivalence} via
  \begin{equation*}
    \abs{\zeta_H(w_H; \UU_H, z_H)-\eta_H(\UU_H, w_H+\delta z_H)}\lesssim\enorm{z_H}+\enorm{w_H - v_H}=(1+\delta)\, \enorm{z_H}.\qedhere
  \end{equation*}
\end{proof}

\begin{remark}\label{rem:equivalenceToCW}
  The update-based reconstruction estimator $\zeta_H(w_H;T,z_H)$ from~\eqref{eq:ellipticReconstructionEstimator} coincides, up to a scaling factor, with the estimator considered in~\cite{cw2017}.
  More precisely, the estimator $\zeta_H^{\rm CW}(w_H;T,v_H)$ from~\cite{cw2017} is based on the reconstruction $\Phi(w_H)\approx\Phi_H(w_H)$ of the Galerkin approximation $\Phi_H(w_H)$ and, in the present setting, would read, for all $T\in\TT_H$ and $v_H, w_H\in\XX_H$,
  \begin{equation*}
    \begin{aligned}
      \zeta_H^{\rm CW}(w_H; T, v_H)^2
       & \coloneqq
      |T|^{2/d} \, \norm{-\div (\vec{A}\nabla( v_H- w_H))
        - \delta \,
        [f + \div(\mu(|\nabla w_H|^2) \, \nabla w_H - \boldsymbol{f}) ]
      }_{L^2(T)}^2
      \\
       & \qquad +
      |T|^{1/d} \,
      \norm{\jump{
          [
              \vec{A}\nabla(v_H - w_H)
              + \delta
              (\mu(|\nabla w_H|^2) \,\nabla w_H - \boldsymbol{f}  )
            ]
          \cdot \boldsymbol{n}}}_{L^2(\partial T \cap \Omega)}^2.
    \end{aligned}
  \end{equation*}
  It is easy to check that, for all $T\in\TT_H$ and all $z_H, w_H\in\XX_H$,
  \begin{equation*}
    \zeta_H^{\rm CW}(w_H; T, w_H+\delta z_H)=\delta \,\zeta_H(w_H; T, z_H).
  \end{equation*}
  Thus, the two estimators agree element-wise up to the factor $\delta$ and hence yield the same set of marked elements under Dörfler marking.
  The convergence analysis developed in the subsequent sections therefore immediately carries over to the estimator $\zeta_H^{\rm CW}$.
  In addition, the update-based reconstruction estimator has the advantage of decoupling the estimator from the damping parameter~$\delta$.
\end{remark}

\begin{remark}
  For $w_H\in\XX_H$, the reconstruction estimator $\zeta_H$ from~\eqref{eq:ellipticReconstructionEstimator} satisfies the \emph{axioms of adaptivity} from Lemma~\ref{lemma:axioms} with constants that depend on the PDE-operator in the usual sense, but that are independent of the linearization point~$w_H$.
  For the continuous update $\starred{z}[w_H]\in\XX$ and its Galerkin approximation $\starred{z}_H[w_H]\in\XX_\coarse$, reliability reads
  \begin{equation}\label{eq:reliabilityZeta}
    \enorm{\starred{z}[w_H] - \starred{z}_H[w_H]}\leq\Crelzeta \, \zeta_H(w_H; \starred{z}_H[w_H])\quad\text{ for some $\Crelzeta>0$.}
  \end{equation}
  Moreover, an analogous reasoning as in Proposition~\ref{prop:estimatorEquivalence} shows that the reconstruction estimator $\zeta_H$ is stable with respect to both arguments, i.e., for all $\UU_H\subseteq\TT_H$ and $z_H, \widetilde{z}_H, w_H, \widetilde{w}_H \in \XX_H$,
  \begin{equation*}
    \begin{aligned}
      |\zeta_H(w_H; \UU_H, z_H) - \zeta_H(\widetilde{w}_H; \UU_H, \widetilde{z}_H)|
       & \le
      \Cstabzeta \, \big( \enorm{w_H - \widetilde{w}_H} + \enorm{z_H - \widetilde{z}_H} \big)\;\text{ for some $\Cstabzeta>0$.}
    \end{aligned}
  \end{equation*}
  We note, however, that the convergence analysis relies only on estimator equivalence~\eqref{eq:estimatorEquivalence} and reliability~\eqref{eq:reliabilityZeta} is used only to motivate the algorithmic design.
\end{remark}

\section{Adaptive Algorithm}\label{section:adaptiveAlgorithm}

The proposed AFEM algorithm follows the \emph{discretize first, linearize second} principle, where adaptive mesh refinement is driven by the reconstruction estimator~\eqref{eq:ellipticReconstructionEstimator}.
The resulting discrete problems~\eqref{eq:nonlinearPDE:discrete} are linearized using the discrete Zarantonello iteration.
A key aspect in this approach is to balance linearization and discretization errors via a stopping criterion for the linearization loop.

\subsection{Idea of error control}\label{subsection:ideaErrorControl}

Given $\TT_H$, let $u_H^{k-1} \in \XX_H$ be a given previous iterate.
Recall that the continuous Zarantonello iterate is given by $\Phi(u_H^{k-1})=u_H^{k-1}+\delta \,\starred{z}[u_H^{k-1}] \in \XX$, where the exact update $\starred{z}[u_H^{k-1}]$ solves~\eqref{eq:zarantonello}.
Since $\Phi(\starred{u})=\starred{u}$, the contraction~\eqref{eq:zarantonelloContraction} of $\Phi$ and the triangle inequality imply
\begin{equation*}
  \enorm{\starred{u} - \Phi(u_H^{k-1})}=\enorm{\Phi(\starred{u}) - \Phi(u_H^{k-1})}
  \eqreff{eq:zarantonelloContraction}{\leq} \qctr\,\enorm{\starred{u} - u_H^{k-1}}
  \leq \qctr\,\enorm{\starred{u} - \Phi(u_H^{k-1})} + \qctr\,\enorm{\Phi(u_H^{k-1}) - u_H^{k-1}}.
\end{equation*}
Rearranging the terms, this yields the following standard \textsl{a~posteriori} estimate
\begin{equation}\label{eq:APEstim}
  \enorm{\starred{u} - \Phi(u_H^{k-1})}
  \leq \frac{\qctr}{1-\qctr}\,\enorm{\Phi(u_H^{k-1}) - u_H^{k-1}}.
\end{equation}
Let $\starred{z}_H^k \coloneq \starred{z}_H[u_H^{k-1}] \in \XX_H$ denote the discrete update defined by~\eqref{eq:zarantonelloDiscrete}.
The total error of the discrete Zarantonello iterate $u_H^k \coloneq u_H^{k-1} + \delta \, \starred{z}_H^k \in \XX_H$ with respect to the exact solution $\starred{u}$ of~\eqref{eq:nonlinearPDE} can then be controlled as follows.
The triangle inequality, the estimate~\eqref{eq:APEstim}, the definition of the discrete update $\starred{z}_H^k$, and reliability~\eqref{eq:reliabilityZeta} of $\zeta_H$ show that
\begin{equation}\label{eq:errorcontrol}\begin{aligned}
    \enorm{\starred{u} - u_H^k}
     & \leqpad\enorm{\starred{u}-\Phi(u_H^{k-1})} + \enorm{\Phi(u_H^{k-1}) - u_H^k}                                                                                         \\
     & \eqreff*{eq:APEstim}{\leqpad} \frac{\qctr}{1-\qctr}\,\enorm{\Phi(u_H^{k-1}) - u_H^{k-1}}+ \enorm{\Phi(u_H^{k-1}) - u_H^k}                            \\
     & \eqpad \frac{\qctr\,\delta}{1-\qctr}\,\enorm{\starred{z}[u_H^{k-1}] }+ \delta\,\enorm{\starred{z}[u_H^{k-1}]-\starred{z}_H^k}                                        \\
     & \leqpad \frac{\qctr\,\delta}{1-\qctr}\,\enorm{\starred{z}_H^k}+\Big( \delta + \frac{\qctr\,\delta}{1-\qctr} \Big)\,\enorm{\starred{z}[u_H^{k-1}]-\starred{z}_H^k}    \\
     & \eqreff*{eq:reliabilityZeta}{\leqpad} \frac{\qctr\,\delta}{1-\qctr}\,\enorm{\starred{z}_H^k}+\frac{\delta}{1-\qctr}\,\Crelzeta\,\zeta_H(u_H^{k-1}; \starred{z}_H^k).
  \end{aligned}\end{equation}
Thus, to balance both contributions of the total error, we employ, for a balancing parameter $\lctr>0$, the stopping criterion
\begin{equation*}
  \enorm{\starred{z}_H^k}
  \le
  \lctr \, \zeta_H(u_H^{k-1}; \starred{z}_H^k)
\end{equation*}
to adaptively steer the number of linearization steps in the inner loop of the following algorithm.

\subsection{Algorithmic formulation}

The adaptive algorithm is presented below.

\begin{algorithm}\label{algorithm}
  \textbf{Input:} initial triangulation $\TT_0$, initial guess $u_0^0 \in \XX_0$, adaptivity parameters $0 < \theta \le 1$, $\lctr>0$, and $\Cmark\geq 1$, damping parameter $\delta>0$.\\
  {\bf for all }\;$\ell = 0, 1, 2, \dots$ { \;\bf do}
  \begin{enumerate}
    \item[{\rm(i)}] {\bf for all }\;$k = 1, 2, 3, \dots$ { \;\bf do}
      \begin{enumerate}
        \item[\rm{(a)}] Compute the discrete update $\starred{z}_\ell^k\coloneq\starred{z}_\ell[u_\ell^{k-1}]\in\XX_\ell$ by solving
          \begin{equation}\label{algorithm:solve}
            a(\starred{z}_\ell^k, v_\ell) = \dual{F- \AA u_\ell^{k-1} }{v_\ell} \quad \text{for all } v_\ell \in \XX_\ell.
          \end{equation}
        \item[\rm{(b)}] Update the Zarantonello iterate via $u_\ell^k\coloneqq \Phi_\ell(u_\ell^{k-1})=u_\ell^{k-1} + \delta \, \starred{z}_\ell^k\in\XX_\ell$.
        \item[{\rm(c)}] Compute the refinement indicator $\zeta_\ell(u_{\ell}^{k-1}; T, \starred{z}_\ell^k)$ for all $T \in \TT_\ell$.
        \item[{\rm(d)}] Terminate the $k$-loop if the following stopping criterion is satisfied:
          \begin{equation}\label{algorithm:stoppingCriterion}
            \enorm{\starred{z}_\ell^k}
            \le
            \lctr \, \zeta_\ell(u_{\ell}^{k-1}; \starred{z}_\ell^k).
          \end{equation}
      \end{enumerate}
    \item[] {\bf end for}
    \item[{\rm(ii)}] Define $\kmax[\ell] \coloneqq k$ and determine a set $\MM_\ell\subseteq\TT_\ell$ with up to the factor $\Cmark$ minimal cardinality that satisfies the Dörfler marking criterion
      \begin{equation}\label{algorithm:doerfler}
        \theta\,\zeta_\ell(u_{\ell}^{\kmax[\ell]-1}; \starred{z}_\ell^{\kmax[\ell]})^2\leq \zeta_\ell(u_{\ell}^{\kmax[\ell]-1}; \MM_\ell, \starred{z}_\ell^{\kmax[\ell]})^2.
      \end{equation}
    \item[{\rm(iii)}] Generate $\TT_{\ell+1} \coloneqq \refine(\TT_\ell, \MM_\ell)$ by newest-vertex bisection.
    \item[{\rm(iv)}] Define $u_{\ell+1}^0 \coloneqq u_\ell^{\kmax[\ell]}$ \quad \textbf{\textup{(nested iteration)}}.
  \end{enumerate}
  {\bf end for}
  \\
  \textbf{Output:}
  Sequence of successively refined meshes $\TT_\ell$ with corresponding updates $\starred{z}_\ell^k\in\XX_\ell$ and approximations $u_\ell^k \in \XX_\ell$ and error estimators $\zeta_\ell(u_{\ell}^{k-1}; \starred{z}_\ell^k)$.
\end{algorithm}

For the convergence analysis of the algorithm, we define the countably infinite index set
\begin{equation*}
  \QQ \coloneqq \set{(\ell,k) \in \N_0\times\N \given \starred{z}_\ell^k \text{ is defined in Algorithm~\ref{algorithm}}},
\end{equation*}
which is naturally equipped with the order $(\ell',k')\leq(\ell,k)$ if and only if $\starred{z}_{\ell'}^{k'}$ is defined not later than $\starred{z}_\ell^k$ in the sequential Algorithm~\ref{algorithm}. For all $(\ell,k)\in\QQ$, the total step counter is given by
\begin{equation*}
  |\ell,k|
  \coloneqq
  \#\set{(\ell',k') \in \QQ \given (\ell', k') < (\ell, k)} \in \N_0.
\end{equation*}
In accordance with the notation of Algorithm~\ref{algorithm}, define
\begin{equation*}
  \lmax
  \coloneqq
  \sup \set{\ell \in \N_0 \given (\ell,1) \in \QQ}
  \quad\text{ and }\quad
  \kmax[\ell]
  \coloneqq
  \sup \set{k \in \N \given (\ell, k) \in \QQ}
  \quad \text{for all } 0 \le \ell \le \lmax.
\end{equation*}
Note that either $\lmax = \infty$ (and hence $\kmax[\ell] < \infty$ for all $\ell \in \N_0$), or $\lmax < \infty$ with $\kmax[\lmax] = \infty$.
We abbreviate $\kmax=\kmax[\ell]$ whenever the index $\ell$ is clear from the context, e.g., $u_\ell^\kmax$ instead of $u_\ell^{\kmax[\ell]}$.

\section{Convergence analysis}\label{section:convergenceAnalysis}

This section presents the convergence analysis of Algorithm~\ref{algorithm}, which relies on the energy structure of the model problem.
Regarding the error estimators, we only exploit that the standard estimator $\eta$ from~\eqref{eq:residualEstimator} satisfies~\eqref{axiom:stability}--\eqref{axiom:discrete_reliability} and that $\eta$ and the reconstruction estimator $\zeta$ from~\eqref{eq:ellipticReconstructionEstimator} are equivalent in the sense of~\eqref{eq:estimatorEquivalence}.
We refer to Remark~\ref{rem:axioms} for a detailed discussion of these estimator assumptions.

\subsection{Lucky breakdown}

In the case that $\ellu<+\infty$, the linearization loop fails to terminate on the triangulation $\TT_\ellu$.
The following lemma asserts that this occurs only if the exact solution $\starred{u}$ of~\eqref{eq:nonlinearPDE} is discrete.
\begin{lemma}
  If $\underline\ell < \infty$ (i.e., $\underline k[\underline\ell] = \infty$), there holds $\starred{u} = \starred{u}_{\underline\ell}$, $\eta_{\underline\ell}(\starred{u}_{\underline\ell}) = 0$, and $u_{\underline\ell}^{k}\to \starred{u}$ as $k \to \infty$.
\end{lemma}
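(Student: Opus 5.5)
Write $\ell \coloneqq \lmax$ and argue on the fixed mesh $\TT_\ell$. Since $\kmax[\ell]=\infty$, the stopping criterion~\eqref{algorithm:stoppingCriterion} fails for every $k\ge1$, i.e.,
\begin{equation*}
  \lctr\,\zeta_\ell(u_\ell^{k-1};\starred{z}_\ell^k) < \enorm{\starred{z}_\ell^k}
  \quad\text{for all } k\in\N .
\end{equation*}
The iterates $u_\ell^k=\Phi_\ell(u_\ell^{k-1})$ are the discrete Zarantonello iterates on the fixed space $\XX_\ell$. By the contraction~\eqref{eq:zarantonelloContraction} and the fixed point property $\Phi_\ell(\starred{u}_\ell)=\starred{u}_\ell$, we get $\enorm{\starred{u}_\ell-u_\ell^k}\le\qctr^k\,\enorm{\starred{u}_\ell-u_\ell^0}\to0$. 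Consequently $\delta\,\enorm{\starred{z}_\ell^k}=\enorm{u_\ell^k-u_\ell^{k-1}}\to0$, and the failed stopping criterion yields $\zeta_\ell(u_\ell^{k-1};\starred{z}_\ell^k)\to0$ as $k\to\infty$.

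Next I transfer this to the standard estimator. The estimator equivalence~\eqref{eq:estimatorEquivalence} of Proposition~\ref{prop:estimatorEquivalence}, with $w_H=u_\ell^{k-1}$ and $z_H=\starred{z}_\ell^k$, gives
\begin{equation*}
  \eta_\ell(u_\ell^k)
  \le \zeta_\ell(u_\ell^{k-1};\starred{z}_\ell^k) + (1+\delta)\,\Ceqv\,\enorm{\starred{z}_\ell^k}
  \longrightarrow 0 .
\end{equation*}
Stability~\eqref{axiom:stability} on the same mesh ($\TT_h=\TT_H=\TT_\ell$) shows $\eta_\ell(\starred{u}_\ell)\le\eta_\ell(u_\ell^k)+\Cstabeta\,\enorm{\starred{u}_\ell-u_\ell^k}$. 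Both terms on the right tend to zero, and the left-hand side does not depend on $k$, so $\eta_\ell(\starred{u}_\ell)=0$.

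Finally, reliability~\eqref{axiom:reliability} gives $\enorm{\starred{u}-\starred{u}_\ell}\le\Creleta\,\eta_\ell(\starred{u}_\ell)=0$, hence $\starred{u}=\starred{u}_\ell$. Therefore $u_\ell^k\to\starred{u}_\ell=\starred{u}$, which is the asserted convergence.

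There is no real obstacle here. The only point requiring care is that the estimator equivalence is applied with the correct pairing, namely $w_H+\delta z_H=u_\ell^k$, and that passing to the limit uses a $k$-independent left-hand side.
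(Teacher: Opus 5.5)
Your proposal is correct and follows essentially the same route as the paper: convergence of the discrete Zarantonello iteration forces $\enorm{\starred{z}_{\lmax}^k}\to 0$, the permanently failing stopping criterion then gives $\zeta_{\lmax}(u_{\lmax}^{k-1};\starred{z}_{\lmax}^k)\to 0$, and estimator equivalence~\eqref{eq:estimatorEquivalence}, stability~\eqref{axiom:stability}, and reliability~\eqref{axiom:reliability} finish the argument. The only difference is cosmetic: you first deduce $\eta_{\lmax}(\starred{u}_{\lmax})=0$ from its $k$-independence and then apply reliability, whereas the paper writes a single chain of inequalities in which this conclusion is left implicit.
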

\begin{proof}
  The discrete Zarantonello iteration in $\XX_{\underline\ell}$ converges to the unique solution $\starred{u}_{\underline\ell} \in \XX_{\underline\ell}$ of~\eqref{eq:nonlinearPDE:discrete}, hence $\enorm{\starred{u}_{\underline\ell} - u_{\underline\ell}^k} \to 0$ as $k \to \infty$ and, subsequently, also $\enorm{\starred{z}_{\ellu}^k}=\delta^{-1}\enorm{u_{\underline\ell}^k - u_{\underline\ell}^{k-1}}\to 0$.
  The stopping criterion in Algorithm~\ref{algorithm} is never satisfied for any $k$ and thus, there holds
  \begin{equation*}
    \zeta_{\underline\ell}(u_{\underline\ell}^{k-1}; \starred{z}_{\underline\ell}^k)
    <
    \lctr^{-1} \, \enorm{\starred{z}_{\ellu}^k}
    \xrightarrow[]{k \to \infty} 0.
  \end{equation*}
  Stability~\eqref{axiom:stability} and reliability~\eqref{axiom:reliability} of $\eta_\ellu$ and estimator equivalence~\eqref{eq:estimatorEquivalence} conclude the proof via
  \begin{equation*}
    \enorm{\starred{u} - \starred{u}_{\underline\ell}}
    \eqreff{axiom:reliability}{\lesssim}\eta_{\underline\ell}(\starred{u}_{\underline\ell})
    \eqreff{axiom:stability}{\lesssim}\eta_{\underline\ell}(u_{\underline\ell}^k) + \enorm{\starred{u}_{\underline\ell} - u_{\underline\ell}^k}
    \eqreff{eq:estimatorEquivalence}{\lesssim}\zeta_{\underline\ell}(u_{\underline\ell}^{k-1}; \starred{z}_{\underline\ell}^k) + \enorm{\starred{z}_{\underline\ell}^k} + \enorm{\starred{u}_{\underline\ell} - u_{\underline\ell}^k} \xrightarrow[]{k \to \infty} 0.\qedhere
  \end{equation*}
\end{proof}

\subsection{Unconditional full R-linear convergence}

This section presents our first main result.
It shows that Algorithm~\ref{algorithm} guarantees full R-linear convergence \emph{unconditionally} with respect to the algorithmic parameters.
This result is particularly relevant in practice, since convergence is ensured independently of the user-chosen adaptivity parameters.
We stress that the smallness condition on $\delta$ required for the Zarantonello contraction~\eqref{eq:zarantonelloContraction} does not compromise this unconditionality, since alternative linearization schemes can be applied without this restriction; cf.~Remark~\ref{rem:alternativeSchmes}.
The proof exploits the underlying energy structure of the PDE~\eqref{eq:PDEstrong}.
More precisely, the associated energy functional $\EE\colon\XX\to\R$ is given by
\begin{equation}\label{def:energy}
  \EE(v)
    \coloneq \frac{1}{2}\int_\Omega\int_0^{\abs{\nabla v(x)}^2}\mu(t) \d{t}\d{x}-\dual{F}{v}
    \quad\text{ for all }v\in\XX.
\end{equation}
The functional $\EE$ is Gâteaux differentiable with $\dual{\EE'(v)}{w}=\dual{\AA v-F}{w}$ for all $v,w\in\XX$.
It is well-known (see, e.g.,~\cite[Lemma~5.1]{GHPS18} or~\cite[Lemma~2]{hw2020:ailfem}) that strong monotonicity and Lipschitz continuity of $\AA$ in~\eqref{assumption:BM} imply the following energy equivalence
\begin{equation}\label{eq:energyEqv}
  0\leq\frac{\alpha}{2}\,\enorm{\starred{u}_H-v_H}^2
    \leq \EE(v_H)-\EE(\starred{u}_H)
    \leq\frac{L}{2}\,\enorm{\starred{u}_H-v_H}^2
    \quad
    \text{ for all $\TT_H\in\T$ and all $v_H\in\XX_H$},
\end{equation}
which also holds for $\starred{u}\in\XX$ and $\XX$ in place of $\starred{u}_H\in\XX_H$ and $\XX_H$.
In particular, this implies that the exact solution $\starred{u}\in\XX$ of~\eqref{eq:nonlinearPDE} and the discrete solution $\starred{u}_H\in\XX_H$ of~\eqref{eq:nonlinearPDE:discrete} are the unique minimizers of $\EE$ over $\XX$ and $\XX_H$, respectively.
This energy structure yields the quasi-orthogonality estimate used in Step~2 of the proof, following the approach of~\cite{gmz2012,GHPS18}.
The complete proof is given in section~\ref{subsec:proofUncond}.

\begin{theorem}[unconditional full R-linear convergence]\label{theorem:linearConvergence}
  For any $0 < \theta \le 1$ and any $\lctr>0$, Algorithm~\ref{algorithm} steered by the elliptic reconstruction error estimator $\zeta_\ell$ guarantees full R-linear convergence of the quasi-error
  \begin{equation}\label{eq:quasiError}
    \Zeta_\ell^k
    \coloneqq
    \enorm{\starred{u}_\ell - u_{\ell}^{k-1}} + \zeta_\ell(u_{\ell}^{k-1}; \starred{z}_\ell^k) \quad \text{for all } (\ell,k) \in \QQ,
  \end{equation}
  i.e., there exist constants $0 < \qlin < 1$ and $\Clin>0$ such that
  \begin{equation}\label{eq:linearConvergence}
    \Zeta_{\ell}^{k}
    \le
    \Clin \, \qlin^{|\ell, k|  -|\ell',k'|  } \, \Zeta_{\ell'}^{k'}
    \quad \text{ for all } (\ell',k'), (\ell,k) \in \QQ \text{ with }  |\ell',k'| \leq |\ell, k|.
  \end{equation}
  The constants $\qlin$ and $\Clin$ depend only on $\lambda, \qctr, \theta, \delta, \Ceqv,\alpha,L$, and the constants in~\eqref{axiom:stability}--\eqref{axiom:discrete_reliability},~\eqref{axiom:qm}.
\end{theorem}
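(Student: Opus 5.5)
We follow the established template for full R-linear convergence of adaptive iterative solvers (cf.~\cite{ghps2021,GHPS18}). The idea is to reduce everything to the standard residual estimator $\eta_\ell$ via the estimator equivalence~\eqref{eq:estimatorEquivalence}, and then to argue with an energy-based quasi-error.

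\textbf{Step 1 (equivalent quasi-error).} Since $u_\ell^k=u_\ell^{k-1}+\delta\starred{z}_\ell^k$, Proposition~\ref{prop:estimatorEquivalence} gives
\[
|\zeta_\ell(u_\ell^{k-1};\starred{z}_\ell^k)-\eta_\ell(u_\ell^k)|\lesssim\enorm{\starred{z}_\ell^k}.
\]
Moreover, the contraction~\eqref{eq:zarantonelloContraction} with fixed point $\starred{u}_\ell$ yields
\[
\delta\enorm{\starred{z}_\ell^k}=\enorm{u_\ell^k-u_\ell^{k-1}}\simeq\enorm{\starred{u}_\ell-u_\ell^{k-1}}
\quad\text{and}\quad
\enorm{\starred{u}_\ell-u_\ell^k}\le\qctr\enorm{\starred{u}_\ell-u_\ell^{k-1}}.
\]
Combined with stability~\eqref{axiom:stability}, this shows
\[
\Zeta_\ell^k\simeq \enorm{\starred{u}_\ell-u_\ell^{k-1}}+\eta_\ell(\starred{u}_\ell)\simeq \enorm{\starred{u}_\ell-u_\ell^{k}}+\eta_\ell(u_\ell^k),
\]
with constants depending only on $\delta,\qctr,\Ceqv,\Cstab$. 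Hence it suffices to prove linear convergence for $\Eta_\ell^k:=\enorm{\starred{u}_\ell-u_\ell^k}+\eta_\ell(u_\ell^k)$, or equivalently for $\enorm{\starred{u}-\starred{u}_\ell}+\enorm{\starred{u}_\ell-u_\ell^k}+\eta_\ell(\starred{u}_\ell)$, using reliability~\eqref{axiom:reliability}.

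\textbf{Step 2 (contraction on the final iterates).} The stopping criterion~\eqref{algorithm:stoppingCriterion} gives $\enorm{\starred{u}_\ell-u_\ell^{\kmax}}\lesssim\enorm{\starred{z}_\ell^{\kmax}}\le\lambda\,\zeta_\ell$, so $\Zeta_\ell^{\kmax}\simeq\zeta_\ell(u_\ell^{\kmax-1};\starred{z}_\ell^{\kmax})\simeq\eta_\ell(\starred{u}_\ell)$, with constants depending on $\lambda$. The last equivalence uses $\enorm{\starred z}\lesssim\lambda\zeta$ together with equivalence and stability in both directions. From Dörfler marking for $\zeta_\ell$ we obtain Dörfler marking for $\eta_\ell(\starred{u}_\ell)$ up to an additive $\lambda$-term. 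This is the main obstacle, because $\lambda$ is arbitrary and the parameter $\theta$ transferred this way may degenerate. To avoid requiring small $\lambda$, I would use the unconditional argument of~\cite{GHPS18}: the energy identity $\EE(\starred{u}_\ell)-\EE(\starred{u}_{\ell+1})\simeq\enorm{\starred{u}_{\ell+1}-\starred{u}_\ell}^2$ from~\eqref{eq:energyEqv}, together with the fact that energies decrease, gives tail summability
\[
\sum_{\ell'\ge\ell}\enorm{\starred{u}_{\ell'+1}-\starred{u}_{\ell'}}^2\lesssim\EE(\starred{u}_\ell)-\EE(\starred{u})\lesssim\eta_\ell(\starred u_\ell)^2.
\]
This serves as quasi-orthogonality. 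Estimator reduction comes from~\eqref{axiom:stability}--\eqref{axiom:reduction} applied to $\zeta$-marked elements. Concretely, $\eta_{\ell+1}(\starred u_{\ell+1})^2\le q\,\eta_\ell(\starred u_\ell)^2+C\enorm{\starred u_{\ell+1}-\starred u_\ell}^2+C\,(\text{inexactness terms})$, where the inexactness terms $\enorm{\starred{u}_\ell-u_\ell^{\kmax}}$ are bounded by $\lambda\zeta$. If Dörfler marking transfers only with a degraded parameter, I would instead perform reduction directly on $\zeta_\ell$. Here I would use that $\zeta_{\ell+1}$ at the next level, with linearization point $u_{\ell+1}^0=u_\ell^{\kmax}$, is comparable to $\eta_{\ell+1}(u_\ell^{\kmax})$ plus $\enorm{\starred z}$-terms. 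These terms are themselves controlled by $\zeta$, so the Dörfler property on $\zeta$ transfers to $\eta_\ell(u_\ell^{\kmax})$ with the perturbation absorbed into a weighted sum.

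\textbf{Step 3 (tail summability $\Rightarrow$ R-linear).} Combining the reduction with quasi-orthogonality yields
\[
\sum_{\ell'>\ell}(\Eta_{\ell'}^{\kmax})^2\lesssim(\Eta_\ell^{\kmax})^2,
\]
which is equivalent to R-linear convergence along $\ell$ by the standard lemma. Inside each level, the Zarantonello contraction together with the failure of the stopping criterion for $k<\kmax$ gives $\Zeta_\ell^k\lesssim\enorm{\starred u_\ell-u_\ell^{k-1}}\le\qctr^{k-k'}\enorm{\starred u_\ell-u_\ell^{k'-1}}$. Nested iteration, together with the bound $\enorm{\starred u_{\ell+1}-u_{\ell+1}^0}\lesssim\enorm{\starred u_{\ell+1}-\starred u_\ell}+\lambda\zeta_\ell$ and~\eqref{axiom:discrete_reliability} and~\eqref{axiom:qm}, gives $\Zeta_{\ell+1}^1\lesssim\Zeta_\ell^{\kmax}$. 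Finally, summing geometric inner tails and outer tails over the lexicographic order gives $\sum_{|\ell',k'|>|\ell,k|}(\Zeta_{\ell'}^{k'})^2\lesssim(\Zeta_\ell^k)^2$, and hence~\eqref{eq:linearConvergence}.
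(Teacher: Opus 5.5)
Your proposal has a genuine gap in Step~2: the mechanism that makes the result hold for \emph{arbitrary} $\lambda>0$ is missing. You correctly observe that transferring Dörfler marking from $\zeta_\ell$ to $\eta_\ell$ produces the perturbation $(1+\delta)\,\Ceqv\,\enorm{\starred z_\ell^{\kk}}$. However, every remedy you offer then controls this term through the stopping criterion (``bounded by $\lambda\zeta$'', ``controlled by $\zeta$''). Absorbing $C\lambda\,\zeta_\ell$ requires $C\lambda<1$, which is exactly the conditional regime of Remark~\ref{rem:condConv} and Lemma~\ref{lemma:doerflerNecessary}. ``Absorbed into a weighted sum'' is not an argument unless you say which component contracts and why.

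For the same reason, your claim $\Zeta_\ell^{\kk}\simeq\eta_\ell(\starred u_\ell)$ fails for large $\lambda$. The bound $\zeta_\ell\lesssim\eta_\ell(\starred u_\ell)$ needs exactly that absorption: if $\starred u\in\XX_\ell$, then $\eta_\ell(\starred u_\ell)=0$, while a large $\lambda$ may stop the loop with $\starred z_\ell^{\kk}\neq0$.

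Step~1 also has an index error. The correct relation is $\Zeta_\ell^k\simeq\Eta_\ell^{k-1}$, not $\Eta_\ell^{k}$, because $\enorm{\starred u_\ell-u_\ell^{k-1}}$ is not controlled by $\enorm{\starred u_\ell-u_\ell^{k}}+\eta_\ell(u_\ell^k)$. For instance, take $\mu\equiv1$, $a(v,w)=\dual{\nabla v}{\nabla w}_{L^2(\Omega)}$, and $\delta=1$: one step is exact, while $u_0^0$ is arbitrary. By switching to $u_\ell^{\kk}$ you discard precisely the quantity that controls the perturbation. Your first equivalence $\Zeta_\ell^k\simeq\enorm{\starred u_\ell-u_\ell^{k-1}}+\eta_\ell(\starred u_\ell)$ was the right quasi-error.

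The missing idea is to never invoke the stopping criterion between levels. Instead, keep the perturbation as the linearization error \emph{before} the last step, using $\enorm{\starred z_\ell^{\kk}}\le\delta^{-1}(1+\qctr)\,\enorm{\starred u_\ell-u_\ell^{\kk-1}}$. Dörfler marking for $\zeta_\ell$, estimator equivalence, stability, and reduction then give the $\lambda$-independent estimate
\[
\eta_{\ell+1}(\starred u_{\ell+1})\le q_\theta\,\eta_\ell(\starred u_\ell)+C[\delta]\,\enorm{\starred u_\ell-u_\ell^{\kk-1}}+\Cstab\,\enorm{\starred u_{\ell+1}-\starred u_\ell},
\qquad q_\theta^2=1-(1-\qred^2)\,\theta/2.
\]
This perturbation itself contracts across levels, since at least one Zarantonello step is performed on every level and $u_{\ell+1}^0=u_\ell^{\kk}$:
\[
\enorm{\starred u_{\ell+1}-u_{\ell+1}^{\kk-1}}\le\enorm{\starred u_{\ell+1}-u_\ell^{\kk}}\le\qctr\,\enorm{\starred u_\ell-u_\ell^{\kk-1}}+\enorm{\starred u_{\ell+1}-\starred u_\ell}.
\]
Hence $\Lambda_\ell:=\enorm{\starred u_\ell-u_\ell^{\kk-1}}+\gamma\,\eta_\ell(\starred u_\ell)\simeq\Zeta_\ell^{\kk}$, with $\gamma$ so small that $\qctr+\gamma\,C[\delta]<1$, satisfies $\Lambda_{\ell+1}\le q\,\Lambda_\ell+C\,\enorm{\starred u_{\ell+1}-\starred u_\ell}$ with $q<1$. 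Your energy-based quasi-orthogonality bounds the square sum of these perturbations by $\eta_\ell(\starred u_\ell)^2\le\gamma^{-2}\Lambda_\ell^2$. A tail-summability lemma for such perturbed contractions then yields $\sum_{\ell'>\ell}\Lambda_{\ell'}\lesssim\Lambda_\ell$.

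Note that the small weight sits on the estimator, not on the linearization error, and $\lambda$ enters only through the inner-loop bound for $k<\kk$. Your remaining ingredients (inner tails, $\Zeta_{\ell+1}^1\lesssim\Zeta_\ell^{\kk}$, lexicographic summation) are fine. One exception: at $k=\kk$ the failed stopping criterion is unavailable, so you need a separate stability bound $\Zeta_\ell^{\kk}\lesssim\Zeta_\ell^{\kk-1}$.
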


\begin{remark}\label{rem:quasiErrors}
  Since $\starred{u}_\ell\in\XX_\ell$ is a fixed point of the discrete Zarantonello iteration, i.e., $\Phi_\ell(\starred{u}_\ell)=\starred{u}_\ell$, and $u_\ell^k=\Phi_\ell(u_\ell^{k-1})$, the triangle inequality and the contraction~\eqref{eq:zarantonelloContraction} of $\Phi_\ell$ yield
  \begin{equation}\label{rem:convergence:eq1}
    (1-\qctr)\,\enorm{\starred{u}_\ell-u_\ell^{k-1}}\eqreff{eq:zarantonelloContraction}{\leq}\enorm{u_\ell^k-u_\ell^{k-1}}=\delta\,\enorm{\starred{z}_\ell^k}\eqreff{eq:zarantonelloContraction}{\leq} (1+\qctr)\,\enorm{\starred{u}_\ell-u_\ell^{k-1}}
    \quad\text{ for all $(\ell,k)\in\QQ$.}
  \end{equation}
  Consequently, the quasi-error $\Zeta_\ell^k$ from~\eqref{eq:quasiError} is equivalent $\Zeta_\ell^k\simeq\widetilde{\Zeta}_\ell^k$ to the fully computable quantity
  \begin{equation}\label{eq:quasiErrorComputable}
    \widetilde{\Zeta}_\ell^k\coloneq\enorm{\starred{z}_\ell^k}+ \zeta_\ell(u_{\ell}^{k-1}; \starred{z}_\ell^k)\quad\text{ for all $(\ell,k)\in\QQ$}.
  \end{equation}
  In particular, by~\eqref{eq:errorcontrol}, Theorem~\ref{theorem:linearConvergence} guarantees unconditional convergence of Algorithm~\ref{algorithm} via
  \begin{equation*}
    \enorm{\starred{u}-u_\ell^k}
    \eqreff{eq:errorcontrol}{\lesssim}\enorm{\starred{z}_\ell^k}+\zeta_\ell(u_\ell^{k-1}; \starred{z}_\ell^k)
    =\widetilde{\Zeta}_\ell^k
    \simeq\Zeta_\ell^k
    \eqreff{eq:linearConvergence}{\leq}\Clin \, \qlin^{|\ell, k|  }\Zeta_0^1\:\xrightarrow{\abs{\ell,k}  \to\infty}\:0.
  \end{equation*}
\end{remark}

\begin{remark}\label{rem:condConv}
  For sufficiently small $\lctr$, Algorithm~\ref{algorithm} implicitly induces a Dörfler criterion for $\eta_\ell$.
  More precisely, if
  $0<\lctr  < (1+\delta)^{-1}\,\Ceqv^{-1}\,\theta^{1/2}\eqcolon\lctr_0$,
  then $\eta_\ell(u_\ell^{\underline k})$ from~\eqref{eq:residualEstimator} fulfills
  \begin{equation}\label{lemma:doerflerSufficient:eq1}
    \widetilde{\theta}\,\eta_\ell(u_\ell^{\underline k})^2\leq \eta_\ell(\MM_\ell, u_\ell^{\underline k})^2\quad\text{with}\quad 0 < \widetilde{\theta} \coloneq \frac{(\theta^{1/2} - \lctr (1+\delta)\Ceqv)^2}{(1+\lctr (1+\delta)\Ceqv)^2} < 1.
  \end{equation}
  A proof of~\eqref{lemma:doerflerSufficient:eq1} is found in Appendix~\ref{subsec:DoerflerSufficient}.
  Hence, for $\lctr<\lctr_0$, Algorithm~\ref{algorithm} steered by $\zeta_\ell$ falls within the scope of the established R-linear convergence theory for adaptive algorithms driven by~$\eta_\ell$; see, e.g.,~\cite{ghps2021,bfmps2025}.
  In particular,~\cite[Theorem~2 and section~6]{bfmps2025} imply \emph{conditional} full R-linear convergence of the quasi-error
  \begin{equation}\label{eq:defEtaQuasiErrir}
    \Eta_\ell^k
    \coloneqq
    \enorm{\starred{u}_\ell - u_{\ell}^{k}}
    + \eta_\ell(u_\ell^k)\quad\text{ for all $(\ell,k)\in\QQ$},
  \end{equation}
  i.e., for any $0 < \theta \le 1$ and sufficiently small $0<\lctr  < \lctr_0$, there exist $0 < \qlin < 1$ and $\Clin>0$, depending only on $\lambda, \theta, \qctr, \alpha, L$, and the constants in~\eqref{axiom:stability}--\eqref{axiom:discrete_reliability}, and~\eqref{axiom:qm}, such that
  \begin{equation*}
    \Eta_{\ell}^{k}
    \le
    \Clin \, \qlin^{|\ell, k|-|\ell',k'|} \, \Eta_{\ell'}^{k'} \quad \text{ for all } (\ell',k'), (\ell,k) \in \QQ \text{ with }  |\ell',k'| \leq |\ell, k|.
  \end{equation*}
  This implication, however, is guaranteed only under the smallness condition $\lctr<\lctr_0$.
  Moreover, estimates~\eqref{rem:convergence:eq1} together with estimator equivalence~\eqref{eq:estimatorEquivalence} and stability~\eqref{axiom:stability} of $\eta_\ell$ show that
  \begin{equation*}\begin{aligned}
      \zeta_\ell(u_{\ell}^{k-1}; \starred{z}_\ell^k)\eqreff{eq:estimatorEquivalence}{\lesssim}\eta_\ell(u_\ell^k) + \enorm{\starred{z}_\ell^k}\eqreff{axiom:stability}{\lesssim}\eta_\ell(u_\ell^{k-1})+ \enorm{u_\ell^k-u_\ell^{k-1}} + \enorm{\starred{z}_\ell^k}\eqreff{rem:convergence:eq1}{\lesssim}\Eta_\ell^{k-1}, \\
      \eta_\ell(u_\ell^{k-1})\eqreff{axiom:stability}{\lesssim}\eta_\ell(u_\ell^k) + \enorm{u_\ell^k-u_\ell^{k-1}}
      \eqreff{eq:estimatorEquivalence}{\lesssim}\zeta_\ell(u_{\ell}^{k-1}; \starred{z}_\ell^k) + \enorm{\starred{z}_\ell^k}+ \enorm{u_\ell^k-u_\ell^{k-1}}\eqreff{rem:convergence:eq1}{\lesssim}\Zeta_\ell^k,
    \end{aligned}\end{equation*}
  where $\Eta_\ell^{k}$ from~\eqref{eq:defEtaQuasiErrir} is extended by $\Eta_\ell^0\coloneq\enorm{\starred{u}_\ell - u_{\ell}^{0}}+\eta_\ell(u_\ell^0)$. In particular, this implies that
  \begin{equation}\label{rem:condConv:eq1}
    \Zeta_\ell^k\simeq\Eta_\ell^{k-1}.
  \end{equation}
  Hence, the above $R$-linear convergence result for $\Eta_\ell^k$ immediately transfers to $\Zeta_\ell^k$ whenever $\lctr<\lctr_0$.
  Theorem~\ref{theorem:linearConvergence} significantly strengthens this by establishing full R-linear convergence of $\Zeta_\ell^k$ for arbitrary $\lctr>0$ without relying on an (implicit) Dörfler criterion for $\eta_\ell$.
\end{remark}

\subsection{Optimal convergence rates and quasi-optimal complexity}

We briefly address the computational complexity of a practical realization of Algorithm~\ref{algorithm}.
For ease of presentation, the analysis assumes that the linear system~\eqref{algorithm:solve} is solved exactly, although an exact solution cannot, in general, be computed at linear cost.
The frameworks in~\cite{bfmps2025,MPS25}, however, provide suitable stopping criteria that allow a uniformly contractive algebraic solver to be incorporated into the analysis to preserve unconditional full R-linear convergence and rigorously establish quasi-optimal complexity.
Suitable solvers with linear cost per iteration include a conjugate gradient method with an optimal multilevel preconditioner and an optimal multigrid method; for further details, see, e.g.,~\cite{CNX12,WZ17,IMPS24,HMP26} and the references therein.
In particular, the parameter-free stopping criterion from~\cite{MPS25} guarantees the required energy contraction of the inexact linearization and a uniformly bounded number of algebraic iterations.
Consequently, it is reasonable to assume that a sufficiently accurate approximation of the discrete update $\starred{z}_\ell^k$ in~\eqref{algorithm:solve} can be computed in $\OO(\#\TT_\ell)$ operations.
These well-established extensions can be incorporated into the present analysis.
However, to focus on the novel arguments concerning the reconstruction estimator, we opt to omit the additional algebraic loop and the corresponding technical notation.
Up to quadrature, the evaluation of the refinement indicators likewise has linear cost $\OO(\#\TT_\ell)$, as does Dörfler marking with quasi-minimal cardinality; see~\cite{s2007} for $\const{C}{mark}=2$ and~\cite{pp2020} for $\const{C}{mark}=1$.
Moreover, refinement by newest-vertex bisection is known to admit an $\mathcal{O}(\#\TT_\ell)$ implementation; see, e.g.,~\cite{s2008, dgs2025}.
Overall, it is thus realistic to assume that the cumulative computational cost up to step $(\ell, k)\in\QQ$ is, up to a constant factor, given by
\begin{equation}\label{eq:costDef}
  \cost(\ell,k)\coloneq\sum_{\substack{(\ell^\prime, k^\prime)\in\QQ \\ \abs{\ell^\prime, k^\prime}\leq \abs{\ell, k}}}\#\TT_{\ell^\prime}\simeq \sum_{\substack{(\ell^\prime, k^\prime)\in\QQ \\ \abs{\ell^\prime, k^\prime}\leq \abs{\ell, k}}}\dim\XX_{\ell^\prime}.
\end{equation}

The following elementary result from~\cite{ghps2021,bfmps2025} highlights a central consequence of full R-linear convergence~\eqref{eq:linearConvergence}.
It establishes that convergence rates of $\Zeta_\ell^k$, measured in terms of the number of degrees of freedom $\dim\XX_\ell\simeq\#\TT_\ell$ and in terms of $\cost(\ell,k)$, agree.

\begin{corollary}[rates = complexity]\label{cor:ratesEqComplex}
  For all $s>0$ and $\Ccost[s]\coloneq\Clin(1-\qlin^{1/s})^{-s}$, full R-linear convergence~\eqref{eq:linearConvergence} guarantees
  \begin{equation*}
    \sup_{(\ell,k)\in\QQ}(\#\TT_\ell)^s\,\Zeta_\ell^k\leq\sup_{(\ell,k)\in\QQ}\cost(\ell,k)^s\,\Zeta_\ell^k
    \leq \Ccost[s]\sup_{(\ell,k)\in\QQ}(\#\TT_\ell)^s\,\Zeta_\ell^k. \tag*{\qed}
  \end{equation*}
\end{corollary}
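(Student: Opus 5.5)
The first inequality is immediate: by definition~\eqref{eq:costDef}, the sum defining $\cost(\ell,k)$ contains the summand $(\ell',k')=(\ell,k)$, hence $\#\TT_\ell\le\cost(\ell,k)$, and raising to the power $s>0$ and taking suprema gives the claim. For the second inequality, set $M\coloneq\sup_{(\ell',k')\in\QQ}(\#\TT_{\ell'})^s\,\Zeta_{\ell'}^{k'}$ and assume $M<\infty$ (otherwise there is nothing to prove). Then $(\#\TT_{\ell'})^{1/s}\le M^{1/s}\,(\Zeta_{\ell'}^{k'})^{-1/s}$ for all $(\ell',k')$. The case $\Zeta_{\ell'}^{k'}=0$ for some index forces, via~\eqref{eq:linearConvergence}, $\Zeta_\ell^k=0$ for all later indices, and these terms contribute nothing to the supremum on the left.

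The key step is to invert R-linear convergence. For $|\ell',k'|\le|\ell,k|$, estimate~\eqref{eq:linearConvergence} gives $\Zeta_\ell^k\le\Clin\,\qlin^{|\ell,k|-|\ell',k'|}\,\Zeta_{\ell'}^{k'}$, so that
\begin{equation*}
  (\Zeta_{\ell'}^{k'})^{-1/s}\le \Clin^{1/s}\,\qlin^{(|\ell,k|-|\ell',k'|)/s}\,(\Zeta_\ell^k)^{-1/s}.
\end{equation*}
Inserting this into the sum defining the cost yields
\begin{equation*}
  \cost(\ell,k)=\sum_{|\ell',k'|\le|\ell,k|}\#\TT_{\ell'}\le M^{1/s}\,\Clin^{1/s}\,(\Zeta_\ell^k)^{-1/s}\sum_{j=0}^{|\ell,k|}(\qlin^{1/s})^{j},
\end{equation*}
where we use that the map $(\ell',k')\mapsto|\ell',k'|$ is injective onto $\{0,\dots,|\ell,k|\}$. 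Bounding the geometric series by $(1-\qlin^{1/s})^{-1}$ and raising both sides to the power $s$ gives $\cost(\ell,k)^s\,\Zeta_\ell^k\le\Clin(1-\qlin^{1/s})^{-s}M=\Ccost[s]\,M$. Taking the supremum over $(\ell,k)$ concludes the argument.

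No step is a genuine obstacle. The only care needed is the convention for vanishing quasi-errors and the sign bookkeeping in the exponent of $\qlin$ when $\Zeta_\ell^k$ is moved to the other side of the inequality.
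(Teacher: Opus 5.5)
Your proof is correct and follows the standard argument from the references cited before the corollary; the paper gives no proof of its own. The first inequality follows from $\#\TT_\ell\le\cost(\ell,k)$, and the second from inverting~\eqref{eq:linearConvergence} and summing a geometric series with ratio $\qlin^{1/s}$, with vanishing quasi-errors handled correctly.

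One small slip: the bound obtained from $M$ should read $\#\TT_{\ell'}\le M^{1/s}\,(\Zeta_{\ell'}^{k'})^{-1/s}$ rather than $(\#\TT_{\ell'})^{1/s}\le M^{1/s}\,(\Zeta_{\ell'}^{k'})^{-1/s}$. The corrected form is the one you actually use afterwards.
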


The next result shows that, provided the adaptivity parameters $\lctr$ and $\theta$ are sufficiently small, a Dörfler criterion for the standard estimator $\eta_\ell$ from~\eqref{eq:residualEstimator} implies a Dörfler criterion for the reconstruction estimator $\zeta_\ell$ from~\eqref{eq:ellipticReconstructionEstimator}.
This is the key observation for establishing optimality of Algorithm~\ref{algorithm} steered by the reconstruction estimator $\zeta_\ell$.
The proof is given in section~\ref{subsec:DoerflerNecessary}.

\begin{lemma}[necessity of Dörfler marking]\label{lemma:doerflerNecessary}
  Suppose that the balancing parameter satisfies
  \begin{equation}\label{lemma:doerflerNecessary:lambdasmall}
    0 < \lctr < C[\qctr, \delta]^{-1}, \;\text{ where }\; C[\qctr, \delta]\coloneq (1+\delta)\,\Ceqv + \frac{\qctr\,\Cstabeta\,\delta}{1-\qctr}.
  \end{equation}
  Moreover, assume that, for some $0<\theta\leq1$ and $\RR_\ell\subseteq\TT_\ell$, the standard estimator $\eta_\ell$ from~\eqref{eq:residualEstimator} for the exact discrete solution $\starred{u}_\ell\in\XX_\ell$ fulfills the Dörfler criterion
  \begin{equation}\label{lemma:doerflerNecessary:premise}
    \thetamark\,\eta_\ell(\starred{u}_\ell)^2\leq \eta_\ell(\RR_\ell, \starred{u}_\ell)^2\quad\text{with}\quad 0 < \thetamark \coloneq \bigg[ \frac{\theta^{1/2} + C[\qctr, \delta] \, \lctr}{1 - C[\qctr, \delta]\, \lctr}\bigg]^2.
  \end{equation}
  Then, the reconstruction estimator $\zeta_\ell$ from~\eqref{eq:ellipticReconstructionEstimator} satisfies the Dörfler criterion
  \begin{equation}\label{lemma:doerflerNecessary:conclusion}
    \theta\,\zeta_\ell(u_\ell^{\kk-1}; \starred{z}_\ell^\kk)^{2} \leq \zeta_\ell(u_\ell^{\kk-1}; \RR_\ell, \starred{z}_\ell^\kk)^{2}.
  \end{equation}
\end{lemma}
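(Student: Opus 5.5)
The plan is a perturbation argument. I show that, at the terminal index $\kk=\kk[\ell]$, the local contributions of $\zeta_\ell(u_\ell^{\kk-1};\,\cdot\,,\starred{z}_\ell^\kk)$ and of $\eta_\ell(\,\cdot\,,\starred{u}_\ell)$ differ by at most $C[\qctr,\delta]\,\lctr\,\zeta_\ell(u_\ell^{\kk-1};\starred{z}_\ell^\kk)$, uniformly in the subset. Then I transfer the Dörfler property from $\eta_\ell$ to $\zeta_\ell$ via the triangle inequality. Throughout, abbreviate $\zeta\coloneq\zeta_\ell(u_\ell^{\kk-1};\starred{z}_\ell^\kk)$, $\zeta(\UU)\coloneq\zeta_\ell(u_\ell^{\kk-1};\UU,\starred{z}_\ell^\kk)$ and $C\coloneq C[\qctr,\delta]$.

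\emph{Step 1 (uniform closeness).} Fix $\UU_\ell\subseteq\TT_\ell$. Since $u_\ell^\kk=u_\ell^{\kk-1}+\delta\starred{z}_\ell^\kk$, estimator equivalence~\eqref{eq:estimatorEquivalence} gives
\[
|\zeta(\UU_\ell)-\eta_\ell(\UU_\ell,u_\ell^\kk)|\le(1+\delta)\,\Ceqv\,\enorm{\starred{z}_\ell^\kk}.
\]
Stability~\eqref{axiom:stability} with $\TT_h=\TT_H=\TT_\ell$ gives $|\eta_\ell(\UU_\ell,u_\ell^\kk)-\eta_\ell(\UU_\ell,\starred{u}_\ell)|\le\Cstabeta\,\enorm{\starred{u}_\ell-u_\ell^\kk}$.

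To bound the last term, use the contraction~\eqref{eq:zarantonelloContraction} together with $\Phi_\ell(\starred{u}_\ell)=\starred{u}_\ell$. This yields $\enorm{\starred{u}_\ell-u_\ell^\kk}\le\qctr\,\enorm{\starred{u}_\ell-u_\ell^{\kk-1}}$. The left inequality in~\eqref{rem:convergence:eq1} then gives $\enorm{\starred{u}_\ell-u_\ell^{\kk-1}}\le\delta(1-\qctr)^{-1}\enorm{\starred{z}_\ell^\kk}$. Combining everything,
\[
|\zeta(\UU_\ell)-\eta_\ell(\UU_\ell,\starred{u}_\ell)|\le C\,\enorm{\starred{z}_\ell^\kk}\le C\,\lctr\,\zeta,
\]
where the last step uses the stopping criterion~\eqref{algorithm:stoppingCriterion}, which holds at $\kk$. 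This step needs $\kk<\infty$; if $\kk[\ell]=\infty$ there is nothing to prove, since no marking occurs.

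\emph{Step 2 (transfer of Dörfler).} Apply Step~1 with $\UU_\ell=\RR_\ell$ and with $\UU_\ell=\TT_\ell$, and use the premise~\eqref{lemma:doerflerNecessary:premise}:
\[
\zeta(\RR_\ell)\ge\eta_\ell(\RR_\ell,\starred{u}_\ell)-C\lctr\zeta\ge\thetamark^{1/2}\eta_\ell(\starred{u}_\ell)-C\lctr\zeta\ge\thetamark^{1/2}(1-C\lctr)\,\zeta-C\lctr\zeta.
\]
By definition of $\thetamark$ and since $1-C\lctr>0$ by~\eqref{lemma:doerflerNecessary:lambdasmall}, we have $\thetamark^{1/2}(1-C\lctr)-C\lctr=\theta^{1/2}$. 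Hence $\zeta(\RR_\ell)\ge\theta^{1/2}\zeta\ge0$, and squaring gives~\eqref{lemma:doerflerNecessary:conclusion}.

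If $\thetamark>1$, the premise forces $\eta_\ell(\starred{u}_\ell)=0$, and the same chain of inequalities remains valid, so this case needs no separate treatment.

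The only delicate point is Step~1. There, one must control $\enorm{\starred{u}_\ell-u_\ell^\kk}$ at the \emph{new} iterate by $\qctr\delta(1-\qctr)^{-1}\enorm{\starred{z}_\ell^\kk}$, which produces the exact constant $C[\qctr,\delta]$. The rest is bookkeeping.
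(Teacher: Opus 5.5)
Your proof is correct and follows the paper's route. You use the a posteriori bound $\enorm{\starred{u}_\ell-u_\ell^{\kk}}\le\qctr\,\delta\,(1-\qctr)^{-1}\enorm{\starred{z}_\ell^{\kk}}$, then stability, estimator equivalence and the met stopping criterion to obtain the perturbation bound $C[\qctr,\delta]\,\lctr\,\zeta_\ell(u_\ell^{\kk-1};\starred{z}_\ell^{\kk})$, which you apply once with $\RR_\ell$ and once with $\TT_\ell$. The differences are cosmetic: you state the two-sided bound in one go (the paper proves one direction and calls the other analogous), and you derive the a posteriori bound from the contraction plus the left inequality in~\eqref{rem:convergence:eq1} rather than by repeating the argument behind~\eqref{eq:APEstim}.
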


With Lemma~\ref{lemma:doerflerNecessary} at hand, we are in the position to prove our second main result guaranteeing that Algorithm~\ref{algorithm} steered by the reconstruction estimator $\zeta_\ell$ decreases the quasi-error $\Zeta_\ell^k$ from~\eqref{eq:quasiError} at the optimal rate with respect to $\cost(\ell,k)$.
To formalize this, we recall the notion of approximation classes~\cite{BDDP02, bdd2004, s2007,ckns2008,axioms}.
For $N\in\N_0$, let $\T(N)$ denote the set of all refinements $\TT\in\T$ satisfying $\#\TT-\#\TT_0\leq N$.
Define
\begin{equation}\label{def:approxnorm}
  \| \starred{u} \|_{\mathbb{A}_s}\coloneq\sup_{N\in\N_0}\bigl[(N+1)^s\min_{\TT_{\rm opt}\in\T(N)}\eta_{\rm opt}(\starred{u}_{\rm opt})\bigr]\in [0,+\infty] \quad\text{ for all $s>0$}.
\end{equation}
The condition $\| \starred{u} \|_{\mathbb{A}_s}<+\infty$ is equivalent to a possible decay (at least) with algebraic rate $s>0$ of the estimator for the exact discrete solutions on optimal meshes.
The proof proceeds along the lines of~\cite[Theorem~8]{ghps2021} and is presented in section~\ref{subsec:rates}.

\begin{theorem}[optimal convergence rates and quasi-optimal complexity]\label{theorem:optimalRates}
  Recall $C[\qctr, \delta]$ from~\eqref{lemma:doerflerNecessary:lambdasmall} and $\thetamark$ from~\eqref{lemma:doerflerNecessary:premise}.
  If $0< \theta < 1$ and $\lctr>0$ satisfy
  \begin{equation}\label{theorem:optimalRates:assumptions}
    0 < \lctr < C[\qctr, \delta]^{-1}
    \quad\text{ and }\quad
    0
    <
    \thetamark
    \eqreff{lemma:doerflerNecessary:premise}{=}
    \bigg[ \frac{\theta^{1/2} + C[\qctr, \delta] \, \lctr}{1 - C[\qctr, \delta]\, \lctr}\bigg]^2
    <
    \thetaopt \coloneq\frac{1}{1 + \Cstab^2\, \Cdrel^2},
  \end{equation}
  then Algorithm~\ref{algorithm} steered by the elliptic reconstruction error estimator $\zeta_\ell$ converges with optimal rates with respect to the number of degrees of freedom $\dim\XX_\ell\simeq\#\TT_\ell$ in the sense that
  \begin{equation}\label{theorem:optimalRates:statement}
    \copt \norm{\starred{u}}_{\mathbb{A}_s}
    \le
    \sup_{(\ell,k) \in \QQ} \big( \#\TT_\ell\big)^s \, \Zeta_\ell^k
    \le
    \Copt \, \max\{\norm{\starred{u}}_{\mathbb{A}_s} , \Zeta_0^1\}
    \quad \text{for all } s>0
  \end{equation}
  and, by Corollary~\ref{cor:ratesEqComplex}, also with respect to the computational cost $\cost(\ell, k)$ defined in~\eqref{eq:costDef}.
  The constant $\copt$ depends only on properties of NVB, $\#\TT_0$, and $s$, while $\Copt$ additionally depends on $\Cmark, \theta, \lctr, \qctr, \delta, \Ceqv, \alpha, L$, and the constants in~\eqref{axiom:stability}--\eqref{axiom:discrete_reliability} and~\eqref{axiom:qm}.
\end{theorem}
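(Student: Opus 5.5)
The plan follows~\cite[Theorem~8]{ghps2021}. The lower bound in~\eqref{theorem:optimalRates:statement} is the standard part. By~\eqref{rem:condConv:eq1}, $\Zeta_\ell^k\simeq\Eta_\ell^{k-1}$, and stability~\eqref{axiom:stability} gives $\eta_\ell(\starred{u}_\ell)\lesssim\eta_\ell(u_\ell^{k-1})+\enorm{\starred{u}_\ell-u_\ell^{k-1}}\lesssim\Zeta_\ell^k$. If $\lmax=\infty$, the meshes $\TT_\ell$ are admissible competitors in~\eqref{def:approxnorm}, and NVB yields $\#\TT_\ell-\#\TT_0+1\simeq\#\TT_\ell$. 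Since $\#\TT_{\ell+1}\lesssim\#\TT_\ell$, every $N$ is bracketed by consecutive mesh sizes, which gives $\copt\norm{\starred{u}}_{\mathbb{A}_s}\le\sup(\#\TT_\ell)^s\Zeta_\ell^k$. If $\lmax<\infty$, the lucky-breakdown lemma gives $\eta_\lmax(\starred{u}_\lmax)=0$, so $\norm{\starred{u}}_{\mathbb{A}_s}$ is trivially controlled by $\#\TT_\lmax$.

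For the upper bound, first treat the case $\lmax=\infty$, and fix $\ell$ and $k=\kk[\ell]$. Set $\varepsilon\simeq\Zeta_\ell^\kk$ and use the comparison lemma from~\cite{axioms}. Since $\thetamark<\thetaopt$, \eqref{axiom:stability}, \eqref{axiom:discrete_reliability}, and~\eqref{axiom:qm} provide a refinement $\TT_h\in\T(\TT_\ell)$ and a set $\RR_\ell\coloneq\TT_\ell\setminus\TT_h$. These satisfy the Dörfler criterion~\eqref{lemma:doerflerNecessary:premise} for $\eta_\ell(\starred{u}_\ell)$ with parameter $\thetamark$, and
\[
\#\RR_\ell\lesssim\norm{\starred{u}}_{\mathbb{A}_s}^{1/s}\,\eta_\ell(\starred{u}_\ell)^{-1/s}.
\]
Lemma~\ref{lemma:doerflerNecessary} then transfers this to the Dörfler criterion~\eqref{lemma:doerflerNecessary:conclusion} for $\zeta_\ell$, which is exactly the marking criterion~\eqref{algorithm:doerfler}. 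By quasi-minimality, $\#\MM_\ell\le\Cmark\#\RR_\ell$. We also need $\eta_\ell(\starred{u}_\ell)\gtrsim\Zeta_\ell^{\kk}$, which should follow at the terminal index from the stopping criterion. Together with~\eqref{rem:convergence:eq1}, it gives $\enorm{\starred{u}_\ell-u_\ell^{\kk-1}}\lesssim\lctr\,\zeta_\ell(u_\ell^{\kk-1};\starred{z}_\ell^\kk)$. Estimator equivalence~\eqref{eq:estimatorEquivalence} and~\eqref{axiom:stability} then give
\[
\zeta_\ell\le\eta_\ell(\starred{u}_\ell)+C[\qctr,\delta]\,\lctr\,\zeta_\ell .
\]
The smallness $C[\qctr,\delta]\lctr<1$ absorbs the last term, so $\Zeta_\ell^\kk\lesssim\zeta_\ell\lesssim\eta_\ell(\starred{u}_\ell)$. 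Hence $\#\MM_\ell\lesssim\norm{\starred{u}}_{\mathbb{A}_s}^{1/s}(\Zeta_\ell^\kk)^{-1/s}$.

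Next, the NVB mesh-closure estimate gives $\#\TT_{\ell}-\#\TT_0\lesssim\sum_{j<\ell}\#\MM_j$. Theorem~\ref{theorem:linearConvergence} gives, as a geometric series, $\sum_{j<\ell}(\Zeta_j^{\kk})^{-1/s}\lesssim(\Zeta_{\ell}^{k})^{-1/s}$ for every $(\ell,k)\in\QQ$. Here we use that $\abs{j,\kk[j]}<\abs{\ell,k}$ and that the counter increments by one per step. Combining these yields $(\#\TT_\ell-\#\TT_0)^s\Zeta_\ell^k\lesssim\norm{\starred{u}}_{\mathbb{A}_s}$. For small meshes, where $\#\TT_\ell\lesssim\#\TT_0$, the bound is by $\Zeta_0^1$ via~\eqref{eq:linearConvergence}, which gives the maximum. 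In the case $\lmax<\infty$, the final mesh is fixed and $\Zeta$ decays, so the same bounds persist. The rate statement with respect to $\cost(\ell,k)$ then follows from Corollary~\ref{cor:ratesEqComplex}.

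The main obstacle is the reverse estimate $\Zeta_\ell^\kk\lesssim\eta_\ell(\starred{u}_\ell)$ at the stopping index, because it is the one place where the smallness of $\lctr$ and the interplay between $\zeta$ and $\eta$ must be handled carefully. The remaining steps are standard.
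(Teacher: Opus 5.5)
Your proposal is correct and follows essentially the same route as the paper. The shared steps are: the comparison lemma with $\thetamark<\thetaopt$; the transfer to $\zeta_\ell$ via Lemma~\ref{lemma:doerflerNecessary}; quasi-minimal marking; the bound $\Zeta_\ell^{\kk}\lesssim\eta_\ell(\starred{u}_\ell)$ from the stopping criterion and $C[\qctr,\delta]\,\lctr<1$; the mesh-closure estimate; and a geometric series from Theorem~\ref{theorem:linearConvergence}. Your deviations are only cosmetic and both are valid. You sum $(\Zeta_j^{\kk[j]})^{-1/s}$ directly instead of first passing to $\Zeta_{j+1}^1$ via the mesh-refinement stability~\eqref{eq:step5:claim}. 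You also replace the paper's Step~3 and the equivalence $\#\TT_\ell-\#\TT_0+1\simeq\#\TT_\ell$ by the elementary case split $\#\TT_\ell\gtrless 2\,\#\TT_0$.
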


\section{Proofs}\label{section:proofs}

\subsection{Proof of \texorpdfstring{Theorem~\ref{theorem:linearConvergence}}{Theorem 10}}
\label{subsec:proofUncond}

The proof is divided into six steps

  {\emph{Step~1.
    }} We show that there exist $0< q_\theta < 1$ and $C[\delta]>0$ such that
\begin{equation}\label{eq:step1:result}
  \eta_{\ell+1}(\starred{u}_{\ell+1})
  \le
  q_\theta \, \eta_\ell(\starred{u}_\ell) + C[\delta] \, \enorm{ \starred{u}_\ell - u_\ell^{\kk-1}} + \Cstabeta \, \enorm{ \starred{u}_{\ell+1} -  \starred{u}_\ell}\quad\text{ for all } 0 \le \ell < \underline\ell.
\end{equation}
Dörfler marking~\eqref{algorithm:doerfler}, estimator equivalence~\eqref{eq:estimatorEquivalence}, and stability~\eqref{axiom:stability} of $\eta_\ell$ imply that
\begin{equation}\label{eq:step1:auxiliary1}\begin{aligned}
    \theta^{1/2} \, \zeta_\ell(u_\ell^{\kk-1}; \starred{z}_\ell^\kk)
    \eqreff{algorithm:doerfler}{\leq}\zeta_\ell(u_\ell^{\kk-1}; \MM_\ell, \starred{z}_\ell^\kk)
     & \eqreff*{eq:estimatorEquivalence}{\:\leq\:}\eta_\ell(\MM_\ell, u_\ell^\kk)+(1+\delta)\,\Ceqv \, \enorm{\starred{z}_\ell^\kk}                                               \\
     & \eqreff*{axiom:stability}{\:\leq\:}\eta_\ell(\MM_\ell, \starred{u}_\ell)+\Cstabeta\,\enorm{\starred{u}_\ell-u_\ell^\kk}+(1+\delta)\,\Ceqv \, \enorm{\starred{z}_\ell^\kk}.
  \end{aligned}\end{equation}
For all $(\ell,k)\in\QQ$, the identity $\starred{z}_\ell^k=\delta^{-1}(u_\ell^k-u_\ell^{k-1})$, the estimate~\eqref{rem:convergence:eq1}, and the Zarantonello contraction~\eqref{eq:zarantonelloContraction} yield
\begin{equation}\label{eq:step1:auxiliary0}
  \enorm{\starred{z}_\ell^k}=\delta^{-1}\,\enorm{u_\ell^k-u_\ell^{k-1}}\eqreff{rem:convergence:eq1}{\leq} \delta^{-1}\,(1+\qctr)\,\enorm{\starred{u}_\ell-u_\ell^{k-1}}
  \quad\text{ and }\quad\enorm{\starred{u}_\ell-u_\ell^k}\eqreff{eq:zarantonelloContraction}{\leq}\qctr\, \enorm{\starred{u}_\ell-u_\ell^{k-1}}.
\end{equation}
Stability~\eqref{axiom:stability} of $\eta_\ell$ and estimator equivalence~\eqref{eq:estimatorEquivalence} together with~\eqref{eq:step1:auxiliary1}--\eqref{eq:step1:auxiliary0} show that
\begin{equation}\label{eq:step1:auxiliary2}\begin{aligned}
    \theta^{1/2} \, \eta_\ell(\starred{u}_\ell)
     & \eqreff*{axiom:stability}{\leqpad}\theta^{1/2} \, \eta_\ell(u_\ell^\kk) + \theta^{1/2}\,\Cstabeta\,\enorm{\starred{u}_\ell-u_\ell^\kk}                                                                                                     \\
     & \eqreff*{eq:estimatorEquivalence}{\leqpad}\theta^{1/2} \, \zeta_\ell(u_\ell^{\kk-1}; \starred{z}_\ell^\kk) + \theta^{1/2} \,(1+\delta)\,\Ceqv\,\enorm{\starred{z}_\ell^\kk} + \theta^{1/2}\,\Cstabeta\,\enorm{\starred{u}_\ell-u_\ell^\kk} \\
     & \eqreff*{eq:step1:auxiliary1}{\leqpad}\eta_\ell(\MM_\ell, \starred{u}_\ell) + (1+\theta^{1/2}) \,(1+\delta)\,\Ceqv\,\enorm{\starred{z}_\ell^\kk} + (1+\theta^{1/2})\,\Cstabeta\,\enorm{\starred{u}_\ell-u_\ell^\kk}                        \\
     & \eqreff*{eq:step1:auxiliary0}{\leqpad}\eta_\ell(\MM_\ell, \starred{u}_\ell)
    +(1+\theta^{1/2})\biggl[\frac{(1+\delta)\,\Ceqv\,(1+\qctr)}{\delta} + \Cstabeta\,\qctr \biggr]\,\enorm{\starred{u}_\ell-u_\ell^{\kk-1}}.
  \end{aligned}\end{equation}
Define $\widetilde{C}[\delta]\coloneq (1+\theta^{1/2})[(1+\delta)\,\Ceqv\,(1+\qctr)\,\delta^{-1} + \Cstabeta\,\qctr]$.
Since all marked elements are refined, i.e., $\MM_\ell\subseteq\TT_\ell\setminus\TT_{\ell+1}$, squaring and applying~\eqref{eq:step1:auxiliary2} yields
\begin{equation}\label{eq:step1:auxiliary3}
  \frac{\theta}{2} \, \eta_\ell(\starred{u}_\ell)^2\eqreff{eq:step1:auxiliary2}{\leq}
  \eta_\ell(\TT_\ell\setminus\TT_{\ell+1}, \starred{u}_\ell)^2 + \widetilde{C}[\delta]^2 \, \enorm{\starred{u}_\ell-u_\ell^{\kk-1}}^2.
\end{equation}
Using stability~\eqref{axiom:stability} and reduction~\eqref{axiom:reduction} for $\eta_\ell$, estimate~\eqref{eq:step1:auxiliary3} validates
\begin{equation}\label{eq:step1:auxiliary4}\begin{aligned}
    \eta_{\ell+1}(\starred{u}_{\ell})^2
     & \leqpad
    \eta_{\ell}(\starred{u}_{\ell})^2 - (1-\qred^2) \, \eta_{\ell}(\TT_\ell \setminus \TT_{\ell+1}, \starred{u}_{\ell})^2 \\
     & \eqreff*{eq:step1:auxiliary3}\leqpad
    \Big( 1- (1-\qred^2) \, \frac{\theta}{2} \Big) \, \eta_{\ell}(\starred{u}_{\ell})^2
    + (1-\qred^2)\widetilde{C}[\delta]^2 \, \enorm{\starred{u}_\ell-u_\ell^{\kk-1}}^2.
  \end{aligned}\end{equation}
For $q_\theta \coloneqq [1 - (1-\qred^2) \, \theta/2]^{1/2}<1$ and $C[\delta]\coloneqq (1-\qred^2)^{1/2}\,\widetilde{C}[\delta]$, estimate~\eqref{eq:step1:auxiliary4} together with stability~\eqref{axiom:stability} of $\eta$ concludes the proof of~\eqref{eq:step1:result} via
\begin{equation*}\begin{aligned}
    \eta_{\ell+1}(\starred{u}_{\ell+1})
     & \eqreff*{axiom:stability}{\leqpad}\eta_{\ell+1}(\starred{u}_{\ell})+\Cstabeta\,\enorm{\starred{u}_{\ell+1}-\starred{u}_{\ell}}                                              \\
     & \eqreff*{eq:step1:auxiliary4}\leqpad\big[q_\theta^2 \, \eta_\ell( \starred{u}_\ell)^2 + C[\delta]^2 \, \enorm{ \starred{u}_\ell - u_\ell^{\kk-1}}^2\big]^{1/2}
    + \Cstabeta \, \enorm{ \starred{u}_{\ell+1} -  \starred{u}_\ell}                                                                                                               \\
     & \leqpad q_\theta \, \eta_\ell(\starred{u}_\ell) + C[\delta] \, \enorm{ \starred{u}_\ell - u_\ell^{\kk-1}} + \Cstabeta \, \enorm{ \starred{u}_{\ell+1} -  \starred{u}_\ell}.
  \end{aligned}\end{equation*}

{\emph{Step~2.}}
For $\gamma>0$ such that $\qctr + \gamma\,C[\delta]<1$, define the auxiliary quasi-error
\begin{equation}\label{eq:quasiError:meshLevel}
  \Lambda_\ell
  \coloneqq
  \enorm{ \starred{u}_\ell - u_{\ell}^{\underline k-1}}
  +
  \gamma \, \eta_\ell( \starred{u}_\ell)
  \quad \text{ for all } 0 \le \ell < \underline\ell.
\end{equation}
The Zarantonello contraction~\eqref{eq:zarantonelloContraction} and nested iteration $u_{\ell+1}^0 = u_\ell^{\underline k}$ imply that
\begin{equation}\label{eq:step2:auxiliary1}
  \enorm{ \starred{u}_{\ell+1} - u_{\ell+1}^{\underline k-1}} \,
  \eqreff{eq:zarantonelloContraction}{\le}  \,
  \enorm{ \starred{u}_{\ell+1} - u_{\ell+1}^{0}}
  =
  \enorm{ \starred{u}_{\ell+1} - u_{\ell}^{\underline k}} \\
  \eqreff{eq:zarantonelloContraction}{\le}
  \qctr \, \enorm{\starred{u}_\ell - u_{\ell}^{\underline k -1}}+\enorm{ \starred{u}_{\ell+1} - \starred{u}_\ell}.
\end{equation}
The combination of the last inequality~\eqref{eq:step2:auxiliary1} and~\eqref{eq:step1:result} from Step~1 results in
\begin{equation}\begin{aligned}
    \Lambda_{\ell+1}
     & \eqreff*{eq:quasiError:meshLevel}{\eqpad} \enorm{ \starred{u}_{\ell+1} - u_{\ell+1}^{\underline k-1}} + \gamma \, \eta_{\ell+1}( \starred{u}_{\ell+1}) \\
     & \eqreff*{eq:step2:auxiliary1}{\leqpad}
    \qctr \, \enorm{\starred{u}_\ell - u_{\ell}^{\underline k -1}}
    +
    \enorm{ \starred{u}_{\ell+1} - \starred{u}_\ell}
    + \gamma \, \eta_{\ell+1}( \starred{u}_{\ell+1})                                                                                                          \\
     & \eqreff*{eq:step1:result}\leqpad
    (\qctr + \gamma \, C[\delta]) \, \enorm{\starred{u}_\ell - u_{\ell}^{\underline k -1}}
    + q_\theta \, \gamma \, \eta_\ell( \starred{u}_\ell) +
    (1+ \gamma \, \Cstabeta)\, \enorm{ \starred{u}_{\ell+1} - \starred{u}_\ell}                                                                               \\
     & \leqpad
    \max\{\qctr + \gamma \, C[\delta] ,q_\theta\} \,  \Lambda_\ell +
    (1+ \gamma \, \Cstabeta)\, \enorm{ \starred{u}_{\ell+1} - \starred{u}_\ell}.
  \end{aligned}\end{equation}
The assumption on $\gamma$ ensures that $0 < \max\{\qctr + \gamma \, C[\delta] ,q_\theta\} < 1$.
Recall the energy functional $\EE$ from~\eqref{def:energy}.
The equivalence~\eqref{eq:energyEqv} together with a telescoping sum and reliability~\eqref{axiom:reliability} shows that
\begin{equation*}\begin{aligned}
    &\frac{\alpha}{2}\,\sum_{\ell^\prime=\ell}^{\ell+N}  \enorm{\starred{u}_{\ell^\prime+1}-\starred{u}_{\ell^\prime}}^2
      \eqreff{eq:energyEqv}{\leq} \sum_{\ell^\prime=\ell}^{\ell+N}\big[\EE(\starred{u}_{\ell^\prime})-\EE(\starred{u}_{\ell^\prime+1})\big]
    =\EE(\starred{u}_\ell)-\EE(\starred{u}_{\ell+N+1})
    \leq\EE(\starred{u}_\ell)-\EE(\starred{u})\\
    &\eqreff{eq:energyEqv}{\leq}\frac{L}{2}\,\enorm{\starred{u} - \starred{u}_\ell}^2
    \eqreff{axiom:reliability}{\leq}\frac{L\,\Creleta^2}{2}\,\eta_\ell(\starred{u}_\ell)^2\eqreff{eq:quasiError:meshLevel}{\leq}\frac{L\,\Creleta^2}{2\gamma^2}\,\Lambda_\ell^2\quad\text{ for all $\ell,N\in\N_0$ with $\ell\leq\ell+N<\ellu$.}
  \end{aligned}\end{equation*}
Consequently, $\Lambda_\ell$ (extended by zero if $\ellu<\infty$) satisfies the assumptions of the tail summability criterion~\cite[Lemma~10]{BLP26arXiv}.
Thus, there exists a constant $\Caux>0$ such that
\begin{equation}\label{eq:step2:claim}
  \sum_{\ell^\prime=\ell+1}^{\ellu-1}\Lambda_{\ell^\prime} \leq \Caux\,\Lambda_\ell \quad \text{ for all } 0 \le \ell < \underline\ell,
\end{equation}
where $\Caux$ depends only on $\qctr, \theta, \delta, \Ceqv, \alpha, L$, and the constants in~\eqref{axiom:stability}--\eqref{axiom:discrete_reliability}.

{\emph{Step~3.}}
Recall the quasi-errors $\Zeta_\ell^{k}$ from~\eqref{eq:quasiError} and $ \Lambda_{\ell}$ from~\eqref{eq:quasiError:meshLevel}.
This step shows that
\begin{equation}\label{eq:step3:estimatorEquivalence}
  \Zeta_\ell^{\underline k} \simeq  \Lambda_{\ell}\quad\text{ for all $0\leq\ell<\ellu$.}
\end{equation}
For $(\ell, k)\in\QQ$, the estimator equivalence~\eqref{eq:estimatorEquivalence}, stability~\eqref{axiom:stability} of $\eta_\ell$, and~\eqref{eq:step1:auxiliary0} imply
\begin{equation}\label{eq:step3:aux1}\begin{aligned}
    \zeta_\ell(u_\ell^{k-1}; \starred{z}_\ell^k)
     & \eqreff*{eq:estimatorEquivalence}{\leqpad}\eta_\ell(u_\ell^k) + (1+\delta)\, \Ceqv \, \enorm{\starred{z}_\ell^k}                                                                       \\
     & \eqreff*{axiom:stability}{\leqpad}\eta_\ell(\starred{u}_\ell) + \Cstabeta\,\enorm{\starred{u}_\ell - u_\ell^k}+ (1+\delta)\, \Ceqv \, \enorm{\starred{z}_\ell^k}                       \\
     & \eqreff*{eq:step1:auxiliary0}{\leqpad} \eta_\ell(\starred{u}_\ell) + \bigl[ \Cstabeta\,\qctr + (1+\delta)\,\Ceqv\,\delta^{-1}\,(1+\qctr)\bigr]\enorm{\starred{u}_\ell - u_\ell^{k-1}}.
  \end{aligned}\end{equation}
An analogous argument shows that
\begin{equation}\label{eq:step3:aux2}
  \eta_\ell(\starred{u}_\ell)\leq \zeta_\ell(u_\ell^{k-1}; \starred{z}_\ell^k)+ \bigl[ \Cstabeta\,\qctr + (1+\delta)\,\Ceqv\,\delta^{-1}\,(1+\qctr)\bigr]\enorm{\starred{u}_\ell - u_\ell^{k-1}}.
\end{equation}
Overall, the estimates~\eqref{eq:step3:aux1}--\eqref{eq:step3:aux2} with $k=\kk[\ell]$ establish~\eqref{eq:step3:estimatorEquivalence}.

{\emph{Step~4.}}
This step proves tail summability of $\Zeta_\ell^k$ in $k$, i.e.,
\begin{equation}\label{eq:step4:claim}
  \Zeta_\ell^{k'}
  \lesssim
  \qctr^{k'-k} \, \Zeta_\ell^k
  \quad \text{ for all } 1 \le k \le k' \le \underline k[\ell].
\end{equation}
Distinguish between two cases.
First, let $1 \le k \le k' < \underline k[\ell]$.
Since~\eqref{algorithm:stoppingCriterion} is not met, it follows that
\begin{equation}\label{eq:step4:aux1}\begin{aligned}
    \Zeta_\ell^{k'}
     & \eqreff*{eq:quasiError}{\eqpad}
    \enorm{\starred{u}_\ell - u_\ell^{k'-1}} + \zeta_\ell(u_\ell^{k'-1}; \starred{z}_\ell^{k'})
    \\
     & \eqreff*{algorithm:stoppingCriterion}{\lepad}
    \enorm{\starred{u}_\ell - u_\ell^{k'-1}} + \lctr^{-1} \, \enorm{\starred{z}_\ell^{k'}}                                                        \\
     & \eqreff{eq:step1:auxiliary0}{\leqpad} \bigl[1 + \lctr^{-1}\,\delta^{-1}\,(1+\qctr)\bigr] \, \enorm{\starred{u}_\ell - u_\ell^{k'-1}}       \\
     & \eqreff{eq:zarantonelloContraction}{\leqpad} \bigl[1 + \lctr^{-1}\,\delta^{-1}\,(1+\qctr)\bigr]\qctr^{k'-k}\, \enorm{\starred{u}_\ell - u_\ell^{k-1}}
    \;\lesssim\; \qctr^{k'-k} \, \Zeta_\ell^k.
  \end{aligned}\end{equation}
Second, it remains to consider the case $k^\prime=\underline k[\ell]$.
If $\kk[\ell]=1$, the claim~\eqref{eq:step4:claim} is evident.
For $k^\prime=\kk[\ell]\geq 2$,
the estimator equivalence~\eqref{eq:estimatorEquivalence}, the stability~\eqref{axiom:stability} of $\eta$, and~\eqref{eq:step1:auxiliary0} show that
\begin{equation}\label{eq:step4:aux2}\begin{aligned}
    \zeta_\ell(u_\ell^{\underline k-1}; \starred{z}_\ell^\kk)
     & \eqreff*{eq:estimatorEquivalence}{\leqpad}
    \eta_\ell(u_\ell^\kk) + (1+\delta)\,\Ceqv\,\enorm{\starred{z}_\ell^\kk}                                                                                                                                                                                 \\
     & \eqreff*{axiom:stability}{\leqpad} \eta_\ell(u_\ell^{\kk-1}) + \Cstabeta\,\enorm{u_\ell^\kk-u_\ell^{\kk-1}} + (1+\delta)\,\Ceqv\,\enorm{\starred{z}_\ell^\kk}                                                                                        \\
     & \eqreff*{eq:estimatorEquivalence}{\leqpad} \zeta_\ell(u_\ell^{\underline k-2}; \starred{z}_\ell^{\kk-1}) + \Cstabeta\,\enorm{u_\ell^\kk-u_\ell^{\kk-1}} + (1+\delta)\,\Ceqv\,\big(\enorm{\starred{z}_\ell^{\kk-1}}+\enorm{\starred{z}_\ell^\kk}\big) \\
     & \eqreff*{eq:step1:auxiliary0}\lesssimpad
    \zeta_\ell(u_\ell^{\underline k-2}; \starred{z}_\ell^{\kk-1})+\enorm{ \starred{u}_\ell - u_\ell^{\underline k -1}} +\enorm{ \starred{u}_\ell - u_\ell^{\underline k -2}}.
  \end{aligned}\end{equation}
Estimate~\eqref{eq:step4:aux2} together with the Zarantonello contraction~\eqref{eq:zarantonelloContraction} establishes stability of $\Zeta_\ell^{\underline k}$ via
\begin{equation}\label{eq:step4:aux3}
  \Zeta_\ell^{\underline k}
  \eqreff*{eq:quasiError}{\eqpad}
  \enorm{ \starred{u}_\ell - u_\ell^{\underline k -1}}
  + \zeta_\ell(u_\ell^{\underline k-1}; \starred{z}_\ell^\kk)
  \eqreff{eq:step4:aux2}{\lesssim} \enorm{ \starred{u}_\ell - u_\ell^{\underline k -1}} +\zeta_\ell(u_\ell^{\underline k-2}; \starred{z}_\ell^{\kk-1})+\enorm{ \starred{u}_\ell - u_\ell^{\underline k -2}}
  \eqreff{eq:zarantonelloContraction}{\lesssim}\Zeta_\ell^{\underline k -1}.
\end{equation}
At the expense of $1/\qctr$ in the multiplicative constant,~\eqref{eq:step4:aux1} and~\eqref{eq:step4:aux3} yield~\eqref{eq:step4:claim} in all cases.

  {\emph{Step~5.}}
Reliability~\eqref{axiom:reliability}, quasi-monotonicity~\eqref{axiom:qm} of $\eta$, and the contraction~\eqref{eq:zarantonelloContraction} yield
\begin{equation}\label{eq:step5:auxiliary}\begin{aligned}
    \enorm{ \starred{u}_{\ell+1} - u_{\ell}^{\underline k}} &
    \le
    \enorm{\starred{u}_\ell - u_{\ell}^{\underline k}} + \enorm{ \starred{u}_{\ell+1} - \starred{u}_\ell}
    \eqreff{axiom:reliability}{\lesssim}
    \enorm{\starred{u}_\ell - u_{\ell}^{\underline k}}
    +
    \eta_\ell( \starred{u}_\ell)+\eta_{\ell+1}( \starred{u}_{\ell+1})                          \\
                                                            & \quad\eqreff{axiom:qm}{\lesssim}
    \enorm{\starred{u}_\ell - u_{\ell}^{\underline k}}
    +
    \eta_\ell( \starred{u}_\ell)
    \eqreff{eq:zarantonelloContraction}{\leq}
    \eta_\ell( \starred{u}_\ell)
    + \qctr\, \enorm{\starred{u}_\ell - u_{\ell}^{\underline k -1}}.
  \end{aligned}\end{equation}
Applying~\eqref{eq:step3:aux1} with $k=1$ and~\eqref{eq:step3:aux2} with $k=\kk[\ell]$, together with quasi-monotonicity~\eqref{axiom:qm} of $\eta$ and nested iteration $u_\ell^{\underline k} = u_{\ell+1}^0$, yields stability $\Zeta_\ell^k$ under mesh refinement, i.e.,
\begin{equation}\label{eq:step5:claim}\begin{aligned}
    \Zeta_{\ell+1}^1
    \eqreff{eq:quasiError}= &
    \enorm{ \starred{u}_{\ell+1} - u_{\ell+1}^{0}}
    + \zeta_{\ell+1}(u_{\ell+1}^{0}; \starred{z}_{\ell+1}^1)
    \eqreff{eq:step3:aux1}\lesssim
    \enorm{ \starred{u}_{\ell+1} - u_{\ell+1}^{0}}
    + \eta_{\ell+1}( \starred{u}_{\ell+1}) \\
                            &
    \eqreff{axiom:qm}\lesssim
    \enorm{ \starred{u}_{\ell+1} - u_{\ell}^{\underline k}}
    + \eta_{\ell}(\starred{u}_\ell)
    \eqreff{eq:step5:auxiliary}\lesssim
    \enorm{\starred{u}_\ell - u_{\ell}^{\underline k -1}}
    + \eta_{\ell}(\starred{u}_\ell)
    \eqreff{eq:step3:aux2}\lesssim
    \Zeta_\ell^{\underline k}.
  \end{aligned}\end{equation}

{\emph{Step~6.}}
Finally, tail summability of $\Zeta_\ell^k$ in both $\ell$ and $k$ follows from Steps~2--5 via
\begin{equation*}
  \begin{aligned}
     & \sum_{\substack{(\ell',k') \in \QQ                     \\ |\ell',k'| > |\ell,k|}}
    \Zeta_{\ell'}^{k'}
    =
    \sum_{k' = k+1}^{\underline k[\ell]} \Zeta_\ell^{k'}
    +
    \sum_{\ell' = \ell + 1}^{\underline \ell} \sum_{k' = 1}^{\underline k[\ell^\prime]} \Zeta_{\ell'}^{k'}
    \eqreff{eq:step4:claim}{\lesssim}
    \Zeta_\ell^k
    +
    \sum_{\ell' = \ell + 1}^{\underline \ell} \Zeta_{\ell'}^1
    =
    \Zeta_\ell^k
    +
    \sum_{\ell' = \ell}^{\underline \ell-1} \Zeta_{\ell'+1}^1 \\
     & \quad\eqreff{eq:step5:claim}{\lesssim}
    \Zeta_\ell^k
    +
    \sum_{\ell' = \ell}^{\underline \ell-1} \Zeta_{\ell'}^{\underline k}
    \eqreff{eq:step3:estimatorEquivalence}{\simeq}
    \Zeta_\ell^k
    +
    \sum_{\ell' = \ell}^{\underline \ell-1}  \Lambda_{\ell'}
    \eqreff{eq:step2:claim}\lesssim
    \Zeta_\ell^k
    +
    \Lambda_{\ell}
    \eqreff{eq:step3:estimatorEquivalence}\simeq
    \Zeta_\ell^k + \Zeta_{\ell}^{\underline k}
    \eqreff{eq:step4:claim}\lesssim
    \Zeta_{\ell}^{k}.
  \end{aligned}
\end{equation*}
The hidden constant depends only on $\lambda, \qctr, \theta, \delta, \Ceqv, \alpha, L$, and the constants in~\eqref{axiom:stability}--\eqref{axiom:discrete_reliability} and~\eqref{axiom:qm}.
Therefore,~\cite[Lemma~2]{bfmps2025} concludes the proof of full R-linear convergence~\eqref{eq:linearConvergence}; see also~\cite[Lemma~4.9]{axioms}, where the summation argument appeared first.
\qed

\subsection{Proof of \texorpdfstring{Lemma~\ref{lemma:doerflerNecessary}}{Lemma 14}}
\label{subsec:DoerflerNecessary}

The argument used to derive~\eqref{eq:APEstim} also applies to the discrete Zarantonello map $\Phi_\ell$ and yields for $u_\ell^\kk=\Phi_\ell(u_\ell^{\kk-1})$ that
\begin{equation}\label{lem:dn:aux1}
  \enorm{\starred{u}_\ell-u_\ell^\kk}\eqreff{eq:zarantonelloContraction}{\leq}\frac{\qctr}{1-\qctr}\enorm{u_\ell^\kk-u_\ell^{\kk-1}}=\frac{\qctr\,\delta}{1-\qctr}\enorm{\starred{z}_\ell^\kk}.
\end{equation}
From stability~\eqref{axiom:stability} of $\eta_\ell$, estimator equivalence~\eqref{eq:estimatorEquivalence}, estimate~\eqref{lem:dn:aux1}, and the met stopping criterion~\eqref{algorithm:stoppingCriterion}, it follows that
\begin{equation}\label{lem:dn:aux2}\begin{aligned}
    \eta_\ell(\RR_\ell, \starred{u}_\ell)
     & \eqreff*{axiom:stability}{\leqpad}\eta_\ell(\RR_\ell, u_\ell^\kk) + \Cstabeta\,\enorm{\starred{u}_\ell-u_\ell^\kk}                                                                                                                      \\
     & \eqreff*{eq:estimatorEquivalence}{\leqpad}\zeta_\ell(u_{\ell}^{\kk -1};\RR_\ell, \starred{z}_\ell^\kk)+(1+\delta)\,\Ceqv\,\enorm{\starred{z}_\ell^\kk}+ \Cstabeta\,\enorm{\starred{u}_\ell-u_\ell^\kk}                                  \\
     & \eqreff*{lem:dn:aux1}{\leqpad}\zeta_\ell(u_{\ell}^{\kk -1};\RR_\ell, \starred{z}_\ell^\kk)+\Bigl[ (1+\delta)\,\Ceqv + \frac{\qctr\,\Cstabeta\,\delta}{1-\qctr}\Bigr]\enorm{\starred{z}_\ell^\kk}                                        \\
     & \eqreff*{algorithm:stoppingCriterion}{\leqpad}\zeta_\ell(u_{\ell}^{\kk -1};\RR_\ell, \starred{z}_\ell^\kk)+\Bigl[ (1+\delta)\,\Ceqv + \frac{\qctr\,\Cstabeta\,\delta}{1-\qctr}\Bigr]\lctr\,\zeta(u_\ell^{\kk-1}; \starred{z}_\ell^\kk).
  \end{aligned}\end{equation}
Define $C[\qctr, \delta] \coloneq (1+\delta)\,\Ceqv + \qctr\,(1-\qctr)^{-1}\,\Cstabeta\,\delta$.
The analogous argument shows that
\begin{equation}\label{lem:dn:aux3}
  \zeta_\ell(u_\ell^{\kk-1}; \RR_\ell, \starred{z}_\ell^\kk) \leq \eta_\ell(\RR_\ell, \starred{u}_\ell) + C[\qctr, \delta]\,\lambda\, \zeta_\ell(u_\ell^{\kk-1}; \starred{z}_\ell^\kk).
\end{equation}
By~\eqref{lemma:doerflerNecessary:premise} and~\eqref{lem:dn:aux2}, there holds that
\begin{equation}\label{lem:dn:aux4}
  \thetamark^{1/2}\,\eta_\ell(\starred{u}_\ell)
  \eqreff{lemma:doerflerNecessary:premise}{\leq}\eta_\ell(\RR_\ell, \starred{u}_\ell)
  \eqreff{lem:dn:aux2}{\leq}\zeta_\ell(u_{\ell}^{\kk -1};\RR_\ell, \starred{z}_\ell^\kk)+C[\qctr, \delta]\,\lctr\,\zeta(u_\ell^{\kk-1}; \starred{z}_\ell^\kk).
\end{equation}
Applying~\eqref{lem:dn:aux3} with $\RR_\ell = \TT_\ell$, and using the smallness of $\lctr$ from~\eqref{lemma:doerflerNecessary:lambdasmall} and~\eqref{lem:dn:aux4}, it follows that
\begin{equation}\label{lem:dn:aux5}\begin{aligned}
    \thetamark^{1/2}\, \zeta_\ell(u_\ell^{\kk-1}; \starred{z}_\ell^\kk)
     & \eqreff*{lem:dn:aux3}{\leqpad}\frac{\thetamark^{1/2}}{1-C[\qctr, \delta]\,\lambda}\, \eta_\ell(\starred{u}_\ell)                                                                                               \\
     & \eqreff*{lem:dn:aux4}{\leqpad} \frac{1}{1-C[\qctr, \delta]\,\lambda}\,\bigl[\zeta_\ell(u_{\ell}^{\kk -1};\RR_\ell, \starred{z}_\ell^\kk)+C[\qctr, \delta]\,\lctr\,\zeta(u_\ell^{\kk-1}; \starred{z}_\ell^\kk)\bigr].
  \end{aligned}\end{equation}
Finally, recalling the definition of $\thetamark$ from~\eqref{lemma:doerflerNecessary:premise}, estimate~\eqref{lem:dn:aux5} concludes the proof of~\eqref{lemma:doerflerNecessary:conclusion} via
\begin{equation*}
  \theta^{1/2}\,\zeta_\ell(u_\ell^{\kk-1}; \starred{z}_\ell^\kk)
  \eqreff{lemma:doerflerNecessary:premise}{=} \big[ \big( 1- C[\qctr, \delta]\,\lctr \big) \, \thetamark^{1/2} - C[\qctr, \delta] \, \lctr \big]\,\zeta_\ell(u_\ell^{\kk-1}; \starred{z}_\ell^\kk)
  \eqreff{lem:dn:aux5}{\leq}\zeta_\ell(u_\ell^{\kk-1}; \RR_\ell, \starred{z}_\ell^\kk). \tag*{\qed}
\end{equation*}

\subsection{Proof of \texorpdfstring{Theorem~\ref{theorem:optimalRates}}{Theorem 14}}
\label{subsec:rates}

The argument of~\cite[Lemma~14]{ghps2021} together with the equivalence~\eqref{rem:condConv:eq1} from Remark~\ref{rem:condConv} yields the lower estimate in~\eqref{theorem:optimalRates:statement}.
For the upper estimate in~\eqref{theorem:optimalRates:statement}, assume without loss of generality that $\norm{\starred{u}}_{\mathbb{A}_s}<+\infty$.
The proof proceeds in three steps.

  {\emph{Step~1.}}
For all $0\leq\ell^\prime<\ellu$, we apply~\cite[Lemma~14]{ghps2021} with $\thetamark$ from~\eqref{theorem:optimalRates:assumptions}.
This yields a set $\RR_{\ell^\prime}\subseteq\TT_{\ell^\prime}$ and constants $\const{C}{cmp}, \const{\widetilde{C}}{cmp}>0$ depending only on $\Cstabeta, \Cdreleta, \Cmoneta$ such that
\begin{equation}\label{eq:rates:aux1}
  \thetamark\,\eta_{\ell^\prime}(\starred{u}_{\ell^\prime})^{2}\leq\eta_{\ell^\prime}(\RR_{\ell^\prime}, \starred{u}_{\ell^\prime})^{2}
  \quad\text{ and }\quad
  \#\RR_{\ell^\prime}\leq \const{\widetilde{C}}{cmp}\,\const{C}{cmp}^{1/s}\, \norm{\starred{u}}_{\mathbb{A}_s}^{1/s}\,\eta_{\ell^\prime}(\starred{u}_{\ell^\prime})^{-1/s}.
\end{equation}
Lemma~\ref{lemma:doerflerNecessary} implies that the reconstruction estimator $\zeta_\ell$ satisfies the Dörfler criterion~\eqref{lemma:doerflerNecessary:conclusion} with $\RR_{\ell^\prime}$.
Since Algorithm~\ref{algorithm} determines $\MM_{\ell^\prime}$ in~\eqref{algorithm:doerfler} with quasi-minimal cardinality, it follows that
\begin{equation}\label{eq:rates:aux2}
  \#\MM_{\ell^\prime}\leq\Cmark\,\#\RR_{\ell^\prime}\eqreff{eq:rates:aux1}{\lesssim}\norm{\starred{u}}_{\mathbb{A}_s}^{1/s}\,\eta_{\ell^\prime}(\starred{u}_{\ell^\prime})^{-1/s}.
\end{equation}
Moreover, estimate~\eqref{rem:convergence:eq1}, the met stopping criterion~\eqref{algorithm:stoppingCriterion}, and applying~\eqref{lem:dn:aux3} with $\RR_\ell = \TT_\ell$ (using the smallness of $\lctr$ from~\eqref{theorem:optimalRates:assumptions}) show that
\begin{equation}\label{eq:rates:aux3}
  \Zeta_{\ell^\prime}^\kk
  \eqreff{eq:quasiError}{=}\enorm{\starred{u}_{\ell^\prime}-u_{\ell^\prime}^{\kk-1}}+\zeta(u_{\ell^\prime}^{\kk-1}; \starred{z}_{\ell^\prime}^\kk)
  \eqreff{rem:convergence:eq1}{\lesssim}\enorm{\starred{z}_{\ell^\prime}^\kk}+\zeta(u_{\ell^\prime}^{\kk-1}; \starred{z}_{\ell^\prime}^\kk)
  \eqreff{algorithm:stoppingCriterion}{\lesssim}\zeta(u_{\ell^\prime}^{\kk-1}; \starred{z}_{\ell^\prime}^\kk)
  \eqreff{lem:dn:aux3}{\lesssim}\eta_{\ell^\prime}(\starred{u}_{\ell^\prime}).
\end{equation}
Together with~\eqref{eq:rates:aux2} and the stability~\eqref{eq:step5:claim} of $\Zeta_{\ell^\prime}^\kk$ under mesh refinement, this results in
\begin{equation}\label{eq:rates:result1}
  \#\MM_{\ell^\prime}
  \eqreff{eq:rates:aux2}{\lesssim}\norm{\starred{u}}_{\mathbb{A}_s}^{1/s}\,\eta_{\ell^\prime}(\starred{u}_{\ell^\prime})^{-1/s}
  \eqreff{eq:rates:aux3}{\lesssim}\norm{\starred{u}}_{\mathbb{A}_s}^{1/s}\,(\Zeta_{\ell^\prime}^\kk)^{-1/s}
  \eqreff{eq:step5:claim}{\lesssim}\norm{\starred{u}}_{\mathbb{A}_s}^{1/s}\,(\Zeta_{\ell^\prime+1}^{1})^{-1/s}.
\end{equation}

{\emph{Step~2.}}
Let $(\ell,k)\in\QQ$ with $\TT_\ell\neq\TT_0$.
Combining $\#\TT_0<\#\TT_\ell$, the mesh-closure estimate for NVB triangulations~\cite{bdd2004, s2008,affkp2013,dgs2025}, and~\eqref{eq:rates:result1} implies that
\begin{equation}\label{eq:rates:aux4}
  \#\TT_\ell-\#\TT_0+1
  \leq 2(\#\TT_\ell-\#\TT_0)
  \lesssim\sum_{\ell^\prime=0}^{\ell-1}\#\MM_{\ell^\prime}
  \eqreff{eq:rates:result1}{\lesssim}\norm{\starred{u}}_{\mathbb{A}_s}^{1/s}\sum_{\ell^\prime=0}^{\ell-1}(\Zeta_{\ell^\prime+1}^{1})^{-1/s}.
\end{equation}
Using~\eqref{eq:rates:aux4} and full R-linear convergence~\eqref{eq:linearConvergence} together with a geometric series argument yields
\begin{equation*}
  \#\TT_\ell-\#\TT_0+1
  \eqreff{eq:rates:aux4}{\lesssim}\norm{\starred{u}}_{\mathbb{A}_s}^{1/s}\sum_{\ell^\prime=0}^{\ell-1}(\Zeta_{\ell^\prime+1}^{1})^{-1/s}
  \leq \norm{\starred{u}}_{\mathbb{A}_s}^{1/s}\sum_{\substack{(\ell^\prime, k^\prime)\in\QQ \\ \abs{\ell^\prime, k^\prime}\leq \abs{\ell, k}}}(\Zeta_{\ell^\prime}^{k^\prime})^{-1/s}
  \eqreff{eq:linearConvergence}{\lesssim}\norm{\starred{u}}_{\mathbb{A}_s}^{1/s}\,(\Zeta_\ell^k)^{-1/s}.
\end{equation*}
Rearranging the terms, the previous formula proves that
\begin{equation}\label{eq:rates:result2}
  (\#\TT_\ell-\#\TT_0+1)^s\,\Zeta_\ell^k\lesssim\norm{\starred{u}}_{\mathbb{A}_s}
  \quad\text{ for all $(\ell,k)\in\QQ$ with $\TT_\ell\neq\TT_0$.}
\end{equation}

{\emph{Step~3.}}
Let $(\ell,k)\in\QQ$ with $\TT_\ell=\TT_0$ and $\ell>0$.
Then, $\MM_0=\emptyset$ and thus $\zeta_0(u_0^{\kk-1}; \starred{z}_0^\kk)=0$ by~\eqref{algorithm:doerfler}.
The stopping criterion~\eqref{algorithm:stoppingCriterion} therefore implies $\starred{z}_0^\kk=0$ and hence the estimate~\eqref{lem:dn:aux1} yields $u_0^\kk=u_0^{\kk-1}=\starred{u}_0$.
Consequently, $\Zeta_0^\kk=0$ and by R-linear convergence~\eqref{eq:linearConvergence} also $\Zeta_\ell^k=0$.
This shows that~\eqref{eq:rates:result2} extends to
\begin{equation}\label{eq:rates:aux5}
  (\#\TT_\ell-\#\TT_0+1)^s\,\Zeta_\ell^k\lesssim\norm{\starred{u}}_{\mathbb{A}_s}
  \quad\text{ for all $(\ell,k)\in\QQ$ with $0<\ell$.}
\end{equation}
Since R-linear convergence~\eqref{eq:linearConvergence} ensures $\Zeta_0^k\lesssim\Zeta_0^1$ for all $1\leq k\leq\kk[0]$, it follows from~\eqref{eq:rates:aux5} that
\begin{equation}\label{eq:rates:aux6}
  (\#\TT_\ell-\#\TT_0+1)^s\,\Zeta_\ell^k\lesssim\max\{\norm{\starred{u}}_{\mathbb{A}_s}, \Zeta_0^1\}
  \quad\text{ for all $(\ell,k)\in\QQ$.}
\end{equation}
Finally,~\cite[Lemma~22]{bhp2017} shows that $\#\TT_\ell-\#\TT_0+1\simeq\#\TT_\ell$.
Together with~\eqref{eq:rates:aux6}, this concludes the proof of the upper estimate in~\eqref{theorem:optimalRates:statement}.
\qed

\section{Numerical experiments}\label{section:experiments}

The numerical experiments presented in this section were performed with the object-oriented \textsc{Matlab} software package MooAFEM from~\cite{ip2022}.
To ensure reproducibility, the source code together with all parameter settings is provided in the Code Ocean capsule~\cite{BGLP26codeocean}.

Algorithm~\ref{algorithm} has already been extensively investigated in the literature, both with the reconstruction estimator~\eqref{eq:ellipticReconstructionEstimator} in~\cite{cw2017, hw2020:convergence} and with the standard residual-based estimator~\eqref{eq:residualEstimator} in, e.g.,~\cite{hw2020:ailfem, hpsv2021, ghps2021, hpw2021}.
We therefore focus on complementary aspects such as the comparison between the reconstruction and standard estimators and the influence of the chosen scalar product in the Zarantonello iteration.
To examine these aspects independently of algebraic solver errors, the linear system~\eqref{algorithm:solve} in Algorithm~\ref{algorithm} is solved to machine precision with \textsc{Matlab}'s built-in direct solver \texttt{mldivide}.

\subsection{Benchmark 1}
\label{subsec:hpw21}
\makeatletter\edef\@currentlabel{\arabic{subsection}}\label{subsec:hpw21-short}\makeatother

We consider the nonlinearity $\mu(t)=1+e^{-t}$ on the Z-shaped domain $\Omega = (-1, 1)^2 \setminus\conv\{(0,0)^\intercal, (-1,0)^\intercal,(-1,-1)^\intercal\}$ and seek $\starred{u}\in H_0^1(\Omega)$ satisfying
\begin{equation*}
  -\div (\mu(|\nabla \starred{u}|^2) \, \nabla \starred{u})
  = \div\bigl(\chi_\omega (1,1)^\intercal\bigr)
  \;\:\text{in }\Omega,
\end{equation*}
where $\chi_\omega\colon\Omega\to\{0,1\}$ is the indicator function of $\omega\coloneq\conv\{(1,0)^\intercal, (1,1)^\intercal, (0,1)^\intercal\}$.
The nonlinearity $\mu$ fulfills the growth condition~\eqref{assumption:mu} with $\widetilde{\alpha}=1-2e^{-3/2}$ and $\widetilde{L}=2$.
We employ Algorithm~\ref{algorithm} with the standard $H_{0}^1(\Omega)$-scalar product $a_{H^1}(v,w)\coloneq\dual{\nabla v}{\nabla w}_{L^2(\Omega)}$ and the corresponding norm $\enorm{\,\cdot\,}=\norm{\nabla (\,\cdot \,)}_{L^2(\Omega)}$.
For this choice, Lemma~\ref{lemma:sharp-lipschitz-radial} in Appendix~\ref{appendix:sharpLipschitz} shows that the induced PDE operator $\AA$ from~\eqref{eq:nonlinearPDE} satisfies~\eqref{assumption:BM} even with the same constants $\alpha$ and $L$ as in~\eqref{assumption:mu}.
Moreover, we use $\delta=\alpha/L^2=(1-2e^{-3/2})/4 \approx 0.13843$, which yields the (theoretical) optimal contraction constant $\qctr$ in~\eqref{eq:zarantonelloContraction}.

To compare the reconstruction and residual-based estimator, we run Algorithm~\ref{algorithm} and the update-based variant of~\cite[Algorithm~2]{ghps2021}.
That is, we replace $\zeta_\ell(u_\ell^{k-1}; \starred{z}_\ell^k)$ in Algorithm~\ref{algorithm} by the standard residual-based estimator $\eta_\ell(u_\ell^k)$ from~\eqref{eq:residualEstimator}.
Figure~\ref{fig:Zshape_meshes} shows the mesh-size functions corresponding to the meshes generated by both variants on level $\ell=15$.
In both cases, refinement is concentrated near all three singularities, namely the one induced by the re-entrant corner and the two induced by the right-hand side at $(1,0)^\intercal$ and $(0,1)^\intercal$.
As predicted by Theorem~\ref{theorem:optimalRates} and~\cite[Theorem~8]{ghps2021}, respectively, Figure~\ref{fig:Zshape_indicatorRates:conv} exhibits the optimal convergence rate $-1/2$ for both variants with bulk parameter $\theta=0.5$.
In contrast, uniform refinement, corresponding to $\theta=1$, leads to suboptimal convergence rates.
The number of linearization iterations in Figure~\ref{fig:Zshape_indicatorRates:steps} is comparable.

\begin{figure}
  \centering
  \begin{subfigure}[t]{0.48\textwidth}
    \centering
    \begin{tikzpicture}
    \begin{axis}[%
        axis equal image,%
        width=\meshWidth,%
        xmin=-1.05, xmax=1.05,%
        ymin=-1.05, ymax=1.05,%
        font=\footnotesize%
    ]
        \addplot graphics [xmin=-1, xmax=1, ymin=-1, ymax=1]
        {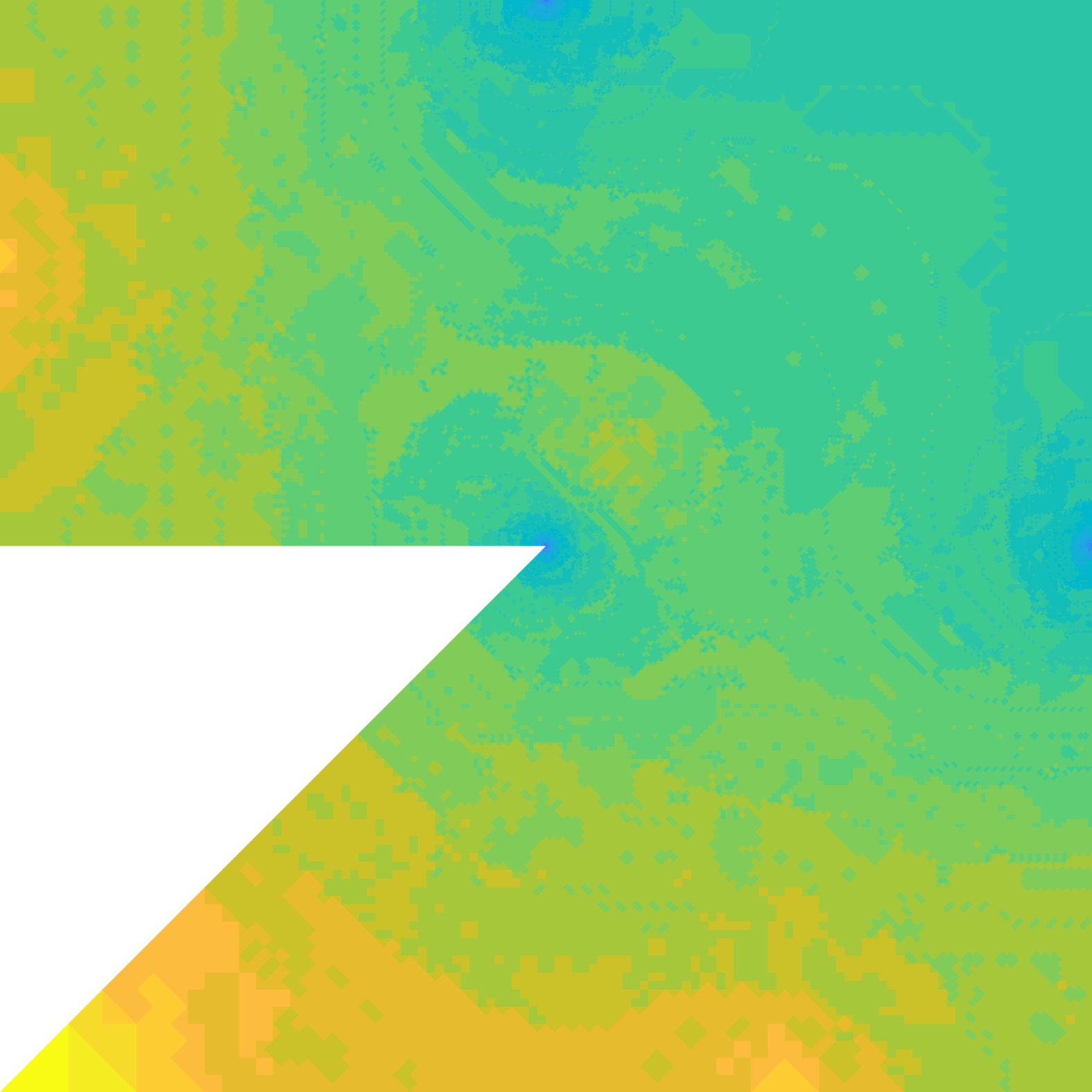};
    \end{axis}
\end{tikzpicture}
    \caption{$h_\ell$ generated with $\zeta$ for $\ell=15$, $\#\TT_{15}=318{,}871$}
    \label{fig:Zshape_meshes:reconstruction}
  \end{subfigure}
  \hfill
  \begin{subfigure}[t]{0.48\textwidth}
    \centering
    \begin{tikzpicture}
    \begin{axis}[%
        axis equal image,%
        width=\meshWidth,%
        xmin=-1.05, xmax=1.05,%
        ymin=-1.05, ymax=1.05,%
        font=\footnotesize,%
        point meta min=-4.66596493e+00,%
        point meta max=-1.05360498e+00,%
        colorbar,%
        colorbar style={%
            title={\(h_\ell\)},%
            font=\footnotesize,%
            width=2.5mm,%
            title style={yshift=-2mm},%
            yticklabel={$10^{\pgfmathprintnumber{\tick}}$},%
            % ytick={-1,-2,-3,-4,-5,-6},%
        },%
    ]
        \addplot graphics [xmin=-1, xmax=1, ymin=-1, ymax=1]
        {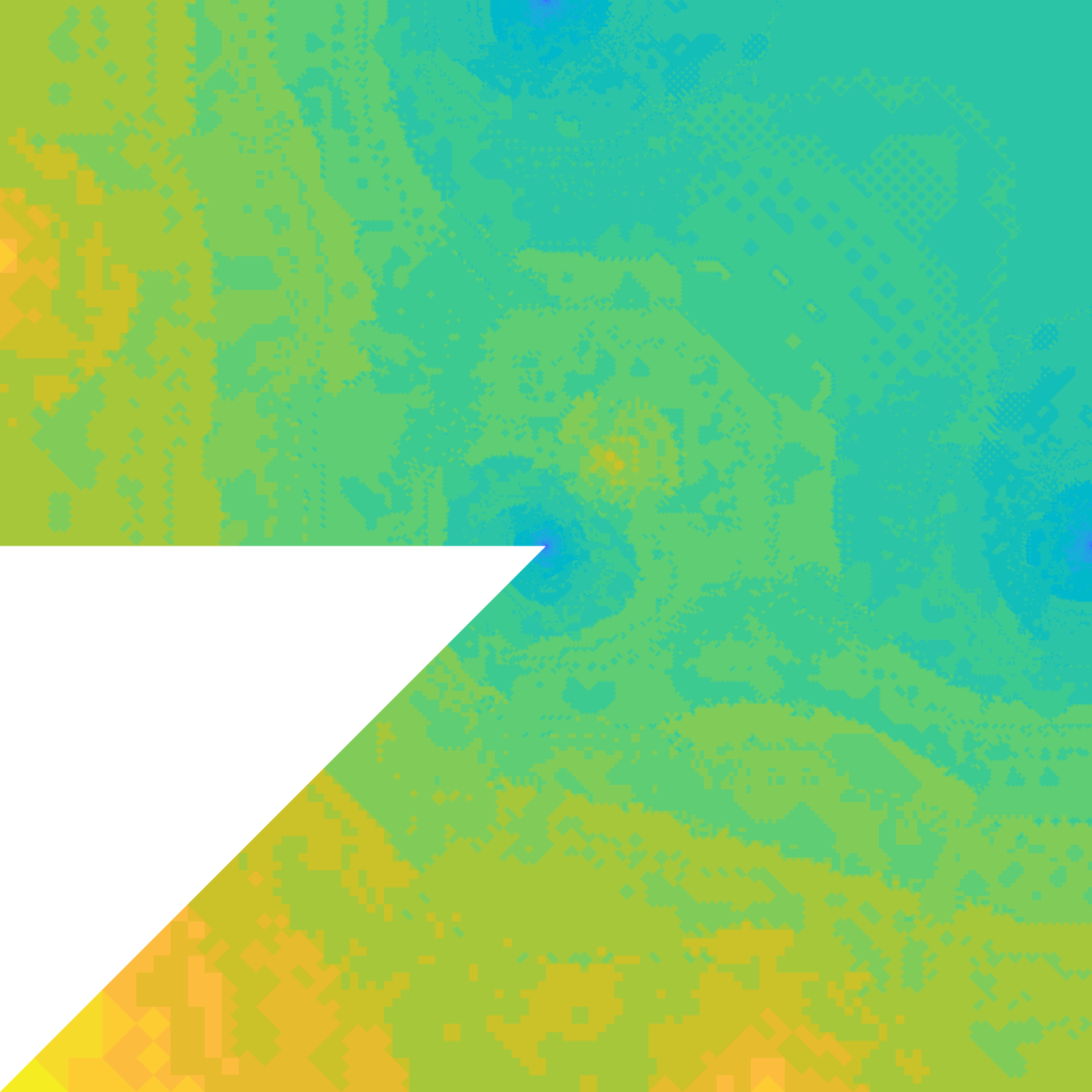};
    \end{axis}
\end{tikzpicture}
    \caption{$h_\ell$ generated with $\eta$ for $\ell=15$, $\#\TT_{15}=416{,}913$}
    \label{fig:Zshape_meshes:standard}
  \end{subfigure}
  \caption{Mesh-size functions $h_\ell$, given by $h_\ell|_T=|T|^{1/2}$ for all $T\in\TT_\ell$, for the level $\ell=15$ mesh generated by Algorithm~\ref{algorithm} for Benchmark~\ref{subsec:hpw21-short}.
    Refinement is steered either by the $\zeta_\ell(u_\ell^{k-1}; \starred{z}_\ell^k)$ from~\eqref{eq:ellipticReconstructionEstimator} (left) or by the standard estimator $\eta_\ell(u_\ell^k)$ from~\eqref{eq:residualEstimator} (right).
    The color bar applies to both plots.
    The parameters are set to $u_0^0=0, \theta=0.5, \protect\Cmark=1, \lctr=0.1, \delta=(1-2e^{-3/2})/4$, and $p=1$.
  }
  \label{fig:Zshape_meshes}
\end{figure}

\begin{figure}
  \centering
  \begin{subfigure}[t]{0.48\textwidth}
    \centering
    \vspace{0pt}
    \begin{tikzpicture}[>=stealth]

    %% READ DATA

    \pgfplotstableread[col sep=comma]{plots/Zshape_indicatorsRates/p1_theta50_lambda0.1_delta0.138435_nDofsMax10000000_scalarProductH1_estimatorTypereconstruction.csv}\reconstruction

    \pgfplotstableread[col sep=comma]{plots/Zshape_indicatorsRates/p1_theta50_lambda0.1_delta0.138435_nDofsMax10000000_scalarProductH1_estimatorTypestandard.csv}\standard

    \pgfplotstableread[col sep=comma]{plots/Zshape_indicatorsRates/p1_theta100_lambda0.1_delta0.138435_nDofsMax10000000_scalarProductH1_estimatorTypereconstruction.csv}\reconstructionUniform

    \pgfplotstableread[col sep=comma]{plots/Zshape_indicatorsRates/p1_theta100_lambda0.1_delta0.138435_nDofsMax10000000_scalarProductH1_estimatorTypestandard.csv}\standardUniform

    %% PLOT
    \begin{loglogaxis}[%
            width            = \convergenceWidth,%
            xlabel           = {$\dim\XX_\ell$},%
            xlabel style     = {text depth=2.4ex},%
            ylabel           = {estimator},%
            ymajorgrids      = true,%
            font             = \footnotesize,%
            grid style       = {%
                densely dotted,%
                semithick%
            },%
            legend style     = {%
                legend pos  = south west,%
                font = \scriptsize%
            },%
        ]

        \coordinate (legend) at (axis description cs:0.69,0.67);

        %% PLOT DATA

        \addplot+ [p1L2, iterative, forget plot]
        table [x={nDofs}, y={zeta}] {\reconstruction}; \label{zeta}

        \addplot+ [p1L2, iterative, forget plot, mark options={fill opacity=0}]
        table [x={nDofs}, y={zeta}] {\reconstructionUniform}; \label{zetaUniform}

        \addplot+ [p1L3, iterative, forget plot]
        table [x={nDofs}, y={eta}] {\standard}; \label{eta}

        \addplot+ [p1L3, iterative, forget plot, mark options={fill opacity=0}]
        table [x={nDofs}, y={eta}] {\standardUniform}; \label{etaUniform}
        
        %% SLOPE TRIANGLE
        \drawswappedslopetriangle{0.35}{9e6}{1.5e-2}
        \drawslopetriangle{0.5}{3e5}{1.6e-3}

    \end{loglogaxis}

    \matrix [
    matrix of nodes,
    anchor=south,
    font=\scriptsize,
    line join = round,
    line cap=round
    ] at (legend) {
         & $\theta=0.5$ & $\theta=1$ \\
        reconst. & \ref{zeta} & \ref{zetaUniform} \\
        standard & \ref{eta} & \ref{etaUniform}\\
    };
\end{tikzpicture}
    \caption{convergence history}
    \label{fig:Zshape_indicatorRates:conv}
  \end{subfigure}
  \hfill
  \begin{subfigure}[t]{0.48\textwidth}
    \centering
    \vspace{0pt}
    \begin{tikzpicture}[>=stealth]

    %% READ DATA

    \pgfplotstableread[col sep=comma]{plots/Zshape_indicatorsRates/p1_theta50_lambda0.1_delta0.138435_nDofsMax10000000_scalarProductH1_estimatorTypereconstruction.csv}\reconstruction

    \pgfplotstableread[col sep=comma]{plots/Zshape_indicatorsRates/p1_theta50_lambda0.1_delta0.138435_nDofsMax10000000_scalarProductH1_estimatorTypestandard.csv}\standard

    %% PLOT
    \begin{semilogxaxis}[%
            width            = \convergenceWidth,%
            xlabel           = {$\dim\XX_\ell$},%
            xlabel style     = {text depth=2.4ex},%
            ylabel           = {number of iterations},%
            ymajorgrids      = true,%
            font             = \footnotesize,%
            grid style       = {%
                densely dotted,%
                semithick%
            },%
            legend style     = {%
                legend pos  = north east,%
                font = \scriptsize%
            },%
        ]

        \coordinate (legend) at (axis description cs:0.74,0.67);

        %% PLOT DATA

        \addplot+ [p1L2, iterative, forget plot]
        table [x={nDofs}, y={ZarIterations}] {\reconstruction};

        \addplot+ [p1L3, iterative, forget plot]
        table [x={nDofs}, y={ZarIterations}] {\standard};

    \end{semilogxaxis}

    \matrix [
    matrix of nodes,
    anchor=south,
    font=\scriptsize,
    line join = round,
    line cap=round
    ] at (legend) {
         & $\theta=0.5$ \\
        reconst. & \ref{zeta} \\
        standard & \ref{eta}\\
    };
\end{tikzpicture}
    \caption{number $\kk[\ell]$ of linearization iterations}
    \label{fig:Zshape_indicatorRates:steps}
  \end{subfigure}
  \caption{Convergence history (left) and number of linearization iterations (right) of Algorithm~\ref{algorithm} over the number of degrees of freedom for Benchmark~\ref{subsec:hpw21-short}.
    The algorithm is steered by either the reconstruction estimator $\zeta_\ell(u_\ell^{k-1}; \starred{z}_\ell^k)$ from~\eqref{eq:ellipticReconstructionEstimator} (circle) or by the standard estimator $\eta_\ell(u_\ell^k)$ from~\eqref{eq:residualEstimator} (triangle).
    Solid markers correspond to the bulk parameter $\theta=0.5$ (adaptive mesh refinement), hollow markers to $\theta=1$ (uniform mesh refinement).
    The left plot displays the corresponding estimators $\zeta_\ell(u_\ell^{\kk-1}; \starred{z}_\ell^\kk)$ and $\eta_\ell(u_\ell^\kk)$.
    The parameters are set to $u_0^0=0, \protect\Cmark=1, \lctr=0.1, \delta=(1-2e^{-3/2})/4$, and $p=1$.
  }
  \label{fig:Zshape_indicatorRates}
\end{figure}

Finally, although stability~\eqref{axiom:stability} and estimator equivalence~\eqref{eq:estimatorEquivalence} are proven only for polynomial degree $p=1$, Figure~\ref{fig:Zshape_rates} shows the optimal rate $-p/2$ for Algorithm~\ref{algorithm} steered by the reconstruction estimator also for $p=2, 4$ with respect to the number of degrees of freedom, the overall computational cost from~\eqref{eq:costDef}, and the cumulative runtime.
Although the overall cost is not equivalent to the runtime here, due to the use of a direct solver for the update, optimal rates are observed in all three quantities.
Again, uniform refinement yields suboptimal convergence rates.

\begin{figure}
  \centering
  \begin{subfigure}[t]{0.48\textwidth}
    \centering
    \vspace{0pt}
    \begin{tikzpicture}[>=stealth]

    %% READ DATA

    \pgfplotstableread[col sep=comma]{plots/Zshape_rates/p2_theta50_lambda0.1_delta0.138435_nDofsMax10000000_scalarProductH1_estimatorTypereconstruction.csv}\Stwo

    \pgfplotstableread[col sep=comma]{plots/Zshape_rates/p4_theta50_lambda0.1_delta0.138435_nDofsMax10000000_scalarProductH1_estimatorTypereconstruction.csv}\Sfour

    \pgfplotstableread[col sep=comma]{plots/Zshape_rates/p2_theta100_lambda0.1_delta0.138435_nDofsMax10000000_scalarProductH1_estimatorTypereconstruction.csv}\StwoUniform

    \pgfplotstableread[col sep=comma]{plots/Zshape_rates/p4_theta100_lambda0.1_delta0.138435_nDofsMax10000000_scalarProductH1_estimatorTypereconstruction.csv}\SfourUniform

    %% PLOT
    \begin{loglogaxis}[%
            width            = \convergenceWidth,%
            xlabel           = {$\dim\XX_\ell$},%
            xlabel style     = {text depth=2.4ex},%
            ylabel           = {quasi-error},%
            ymajorgrids      = true,%
            font             = \footnotesize,%
            grid style       = {%
                densely dotted,%
                semithick%
            },%
            legend style     = {%
                legend pos  = south west,%
                font = \scriptsize%
            },%
        ]

        %% PLOT DATA

        \addplot+ [p2L2, iterative, forget plot]
        table [x={nDofs}, y={Zeta}] {\Stwo}; \label{Zetap2}

        \addplot+ [p2L2, iterative, forget plot, mark options={fill opacity=0}]
        table [x={nDofs}, y={Zeta}] {\StwoUniform}; \label{Zetap2Uniform}

        \addplot+ [p3L2, iterative, forget plot]
        table [x={nDofs}, y={Zeta}] {\Sfour}; \label{Zetap4}

        \addplot+ [p3L2, iterative, forget plot, mark options={fill opacity=0}]
        table [x={nDofs}, y={Zeta}] {\SfourUniform}; \label{Zetap4Uniform}
        
        %% SLOPE TRIANGLE
        \drawswappedslopetriangle{1}{6e6}{2.5e-4}
        \drawswappedslopetriangle{0.32}{7e6}{4e-2}
        \drawslopetriangle{2}{3.5e5}{1e-9}

    \end{loglogaxis}
\end{tikzpicture}
    \caption{number of degrees of freedom}
  \end{subfigure}
  \hfill
  \begin{subfigure}[t]{0.48\textwidth}
    \centering
    \vspace{0pt}
    \begin{tikzpicture}[>=stealth]

    %% READ DATA

    \pgfplotstableread[col sep=comma]{plots/Zshape_rates/p2_theta50_lambda0.1_delta0.138435_nDofsMax10000000_scalarProductH1_estimatorTypereconstruction.csv}\Stwo

    \pgfplotstableread[col sep=comma]{plots/Zshape_rates/p4_theta50_lambda0.1_delta0.138435_nDofsMax10000000_scalarProductH1_estimatorTypereconstruction.csv}\Sfour

    \pgfplotstableread[col sep=comma]{plots/Zshape_rates/p2_theta100_lambda0.1_delta0.138435_nDofsMax10000000_scalarProductH1_estimatorTypereconstruction.csv}\StwoUniform

    \pgfplotstableread[col sep=comma]{plots/Zshape_rates/p4_theta100_lambda0.1_delta0.138435_nDofsMax10000000_scalarProductH1_estimatorTypereconstruction.csv}\SfourUniform

    %% PLOT
    \begin{loglogaxis}[%
            width            = \convergenceWidth,%
            xlabel           = {$\sum_{\substack{(\ell^\prime, k^\prime)\in\QQ \\ \abs{\ell^\prime, k^\prime}\leq \abs{\ell, \kk}}}\dim\XX_{\ell^\prime}$},%
            xlabel style      = {text depth=2.4ex},%
            ylabel           = {quasi-error},%
            ymajorgrids      = true,%
            font             = \footnotesize,%
            grid style       = {%
                densely dotted,%
                semithick%
            },%
            legend style     = {%
                legend pos  = south west,%
                font = \scriptsize%
            },%
        ]

        %% PLOT DATA

        \addplot+ [p2L2, iterative, forget plot]
        table [x={work}, y={Zeta}] {\Stwo};

        \addplot+ [p2L2, iterative, forget plot, mark options={fill opacity=0}]
        table [x={work}, y={Zeta}] {\StwoUniform};

        \addplot+ [p3L2, iterative, forget plot]
        table [x={work}, y={Zeta}] {\Sfour};

        \addplot+ [p3L2, iterative, forget plot, mark options={fill opacity=0}]
        table [x={work}, y={Zeta}] {\SfourUniform};
        
        %% SLOPE TRIANGLE
        \drawswappedslopetriangle{1}{1e8}{2.5e-4}
        \drawswappedslopetriangle{0.32}{1e8}{3e-2}
        \drawslopetriangle{2}{9e6}{1e-9}

    \end{loglogaxis}
\end{tikzpicture}
    \caption{cumulative number of degrees of freedom}
  \end{subfigure}

  \bigskip
  \begin{subfigure}[b]{0.48\textwidth}
    \centering
    \begin{tikzpicture}[>=stealth]

    %% READ DATA

    \pgfplotstableread[col sep=comma]{plots/Zshape_rates/p2_theta50_lambda0.1_delta0.138435_nDofsMax10000000_scalarProductH1_estimatorTypereconstruction.csv}\Stwo

    \pgfplotstableread[col sep=comma]{plots/Zshape_rates/p4_theta50_lambda0.1_delta0.138435_nDofsMax10000000_scalarProductH1_estimatorTypereconstruction.csv}\Sfour

    \pgfplotstableread[col sep=comma]{plots/Zshape_rates/p2_theta100_lambda0.1_delta0.138435_nDofsMax10000000_scalarProductH1_estimatorTypereconstruction.csv}\StwoUniform

    \pgfplotstableread[col sep=comma]{plots/Zshape_rates/p4_theta100_lambda0.1_delta0.138435_nDofsMax10000000_scalarProductH1_estimatorTypereconstruction.csv}\SfourUniform

    %% PLOT
    \begin{loglogaxis}[%
            width            = \convergenceWidth,%
            xlabel           = {cumulative runtime [s]},%
            ylabel           = {quasi-error},%
            ymajorgrids      = true,%
            font             = \footnotesize,%
            grid style       = {%
                densely dotted,%
                semithick%
            },%
            legend style     = {%
                legend pos  = south west,%
                font = \scriptsize%
            },%
        ]

        %% PLOT DATA

        \addplot+ [p2L2, iterative, forget plot]
        table [x={runtime}, y={Zeta}] {\Stwo};

        \addplot+ [p2L2, iterative, forget plot, mark options={fill opacity=0}]
        table [x={runtime}, y={Zeta}] {\StwoUniform};

        \addplot+ [p3L2, iterative, forget plot]
        table [x={runtime}, y={Zeta}] {\Sfour};

        \addplot+ [p3L2, iterative, forget plot, mark options={fill opacity=0}]
        table [x={runtime}, y={Zeta}] {\SfourUniform};
        
        %% SLOPE TRIANGLE
        \drawswappedslopetriangle{0.32}{3e3}{3.5e-2}
        \drawswappedslopetriangle{1}{4e3}{1.5e-4}
        \drawslopetriangle{2}{1.9e2}{7e-10}

    \end{loglogaxis}
\end{tikzpicture}
    \caption{cumulative runtime}
  \end{subfigure}
  \hfill
  \begin{subfigure}[b]{0.48\textwidth}
    \centering
    \vspace{0pt}
    \begin{tikzpicture}[>=stealth]

  \matrix(m) [
    matrix of nodes,
    nodes in empty cells,
    anchor = center,
    font = \scriptsize,
    column 1/.style={anchor=base east},
  ] at (0,0) {
     &                                   &  &  &
     \\
     & $\theta=0.5$
     & $\theta=1$
    \\
    \(p=2\)
     & \ref*{Zetap2}
     & \ref*{Zetap2Uniform}
    \\
    \(p=4\)
     & \ref*{Zetap4}
     & \ref*{Zetap4Uniform}
    \\
  };

  \draw[line width=\arrayrulewidth, color=black] (m.south west |- m-2-1.south west) -- (m-2-3.south east);

\end{tikzpicture}
    \caption{legend}
  \end{subfigure}
  \caption{Convergence history of the computable quasi-error $\widetilde{\Zeta}_\ell^\kk$ from~\eqref{eq:quasiErrorComputable} of Algorithm~\ref{algorithm} for polynomial degrees $p=2$ (square) and $p=4$ (diamond) for Benchmark~\ref{subsec:hpw21-short}.
    Solid markers correspond to the bulk parameter $\theta=0.5$ (adaptive mesh refinement), hollow markers to $\theta=1$ (uniform mesh refinement).
    The parameters are set to $u_0^0=0, \protect\Cmark=1, \lctr=0.1$, and $\delta=(1-2e^{-3/2})/4$.
  }
  \label{fig:Zshape_rates}
\end{figure}

\subsection{Benchmark 2}
\label{subsec:mv23}
\makeatletter\edef\@currentlabel{\arabic{subsection}}\label{subsec:mv23-short}

For parameters $0<\tau<1$ and $q>1/2$, we consider the nonlinearity
\begin{equation}\label{eq:B2nonlinearity}
  \mu(t)=\frac{c_q+\tau}{1+c_q}+\frac{1-\tau}{1+c_q}(1+t)^{-q},
  \quad\text{ where }\quad
  c_q\coloneq 2\biggl(\frac{2q-1}{2(q+1)}\biggr)^{q+1}>0.
\end{equation}
Elementary calculus shows that the induced mapping $t\mapsto\mu(t^2)\,t$ satisfies
\begin{equation*}
  \tau
  \leq
  \frac{\d{}}{\d t}\bigl(\mu(t^2)\,t\bigr)
  \leq
  1
  \quad \text{ for all } t\geq 0.
\end{equation*}
Hence, $\mu$ satisfies the growth condition~\eqref{assumption:mu} with $\widetilde{\alpha}=\tau$ and $\widetilde{L}=1$.

For the benchmark, we take~\eqref{eq:PDEstrong} with mixed boundary conditions and the nonlinearity~\eqref{eq:B2nonlinearity} for $\tau=0.01$ and $q=11/20$.
On the L-shaped domain $\Omega=(-1,1)^2\setminus \bigl([0,1]\times [-1,0]\bigr)$, we impose Neumann boundary conditions on $\Gamma_N\coloneq \partial\Omega\cap\partial[-1,1]^2$ and homogeneous Dirichlet boundary conditions on $\Gamma_D\coloneq\partial\Omega\setminus\Gamma_N$.
We seek $\starred{u}\in H_D^1(\Omega)\coloneq\set{v\in H^1(\Omega) \given v|_{\Gamma_D}=0}$ satisfying
\begin{equation*}
  -\div (\mu(|\nabla \starred{u}|^2) \, \nabla \starred{u}) = f
  \;\:\text{in }\Omega\quad\text{ subject to }\quad
  \starred{u} = 0\;\:\text{on }\Gamma_D
  \;\:\text{ and }\;\:
  \mu(|\nabla \starred{u}|^2)\nabla \starred{u}\cdot\vec{n} =\phi\;\:\text{on }\Gamma_N.
\end{equation*}
The forcing term $f$ and the Neumann datum $\phi$ are chosen so that, in polar coordinates $(r, \varphi)$ with $\varphi\in(0,3\pi/2)$, the exact solution reads
\begin{equation*}
  \starred{u}(r, \varphi)=r^{2/3}\sin(2\varphi/3).
\end{equation*}
The choice $q=11/20>1/2$ ensures that $f\in L^2(\Omega)$.
Moreover, $\phi\in L^2(\Gamma_N)$.

We consider Algorithm~\ref{algorithm} steered by the reconstruction estimator $\zeta_\ell(u_\ell^{k-1}; \starred{z}_\ell^k)$ from~\eqref{eq:ellipticReconstructionEstimator} and compare the standard $H_{D}^1(\Omega)$-scalar product $a_{H^1}(v,w)=\dual{\nabla v}{\nabla w}_{L^2(\Omega)}$ with the (theoretical) $\mu$-weighted scalar product $a_\mu(v,w)\coloneq\dual{\mu(\abs{\nabla\starred{u}}^2)\,\nabla v}{\nabla w}_{L^2(\Omega)}$.
For $a_{H^1}(\,\cdot\,,\,\cdot\,)$, Lemma~\ref{lemma:sharp-lipschitz-radial} in Appendix~\ref{appendix:sharpLipschitz} shows that the induced operator $\AA$ from~\eqref{eq:nonlinearPDE} satisfies~\eqref{assumption:BM} on $\XX= H_D^1(\Omega)$ in the induced norm $\enorm{\,\cdot\,}_{H^1}=\norm{\nabla (\,\cdot \,)}_{L^2(\Omega)}$ with $\alpha=\widetilde{\alpha}=0.01$ and $L=\widetilde{L}=1$.
For $a_\mu(\,\cdot\,,\,\cdot\,)$, assumption~\eqref{assumption:BM} holds in the induced norm $\enorm{\,\cdot\,}_{\mu}=\norm{\mu(\abs{\nabla\starred{u}}^2)^{1/2}\,\nabla (\,\cdot \,)}_{L^2(\Omega)}$ with the admissible choice $\alpha=\widetilde{\alpha}/[\esssup_\Omega\mu(\abs{\nabla\starred{u}}^2)]$ and $L=\widetilde{L}/[\essinf_\Omega\mu(\abs{\nabla\starred{u}}^2)]$.
However, these global bounds are, in practice, too pessimistic.
Indeed, if $v,w\in\XX$ are sufficiently near $\starred{u}$ in the sense that $\mu(\abs{\nabla v}^2)\nabla v$ and $\mu(\abs{\nabla w}^2)\nabla w$ are close in $L^2(\Omega)$ to $\mu(\abs{\nabla\starred{u}}^2)\nabla v$ and $\mu(\abs{\nabla\starred{u}}^2)\nabla w$, respectively, then $\AA v-\AA w$ is well approximated by the operator induced by $a_\mu(\,\cdot\,,\,\cdot\,)$ applied to $v-w$.
In this local regime, the effective local constants associated with~\eqref{assumption:BM} are expected to be more favorable in the norm induced by $a_\mu(\,\cdot\,,\,\cdot\,)$.
This motivates the use of $a_\mu(\,\cdot\,,\,\cdot\,)$ in the Zarantonello iteration.
Clearly, this weighting is theoretical, since $\starred{u}$ is unknown (and sought) in practice.
However, replacing $\starred{u}$ by the current linearization point $u_\ell^{k-1}$ in Algorithm~\ref{algorithm} leads to the linearization-point-dependent Ka\v{c}anov scalar product $a_{(\ell,k)}(v,w)\coloneq \dual{\mu(\abs{\nabla u_\ell^{k-1}}^2)\,\nabla v}{\nabla w}_{L^2(\Omega)}$, which is a computable substitute for $a_\mu(\,\cdot\,,\,\cdot\,)$; cf.~Remark~\ref{rem:alternativeSchmes}.

Figure~\ref{fig:Lshape_comparison} compares the performance of Algorithm~\ref{algorithm} using either the $H_{D}^1(\Omega)$-scalar product $a_{H^1}(\,\cdot\,,\,\cdot\,)$ or the Ka\v{c}anov scalar product $a_{(\ell,k)}(\,\cdot\,,\,\cdot\,)$.
Computations with the $\mu$-weighted scalar product $a_\mu(\,\cdot\,,\,\cdot\,)$ were also performed, but the results are omitted because they are nearly indistinguishable from those obtained with $a_{(\ell,k)}(\,\cdot\,,\,\cdot\,)$.
For both scalar products, the figure additionally shows results obtained when the algorithm is driven with the standard estimator, i.e., when $\zeta_\ell(u_\ell^{k-1}; \starred{z}_\ell^k)$ is replaced in Algorithm~\ref{algorithm} by the standard residual-based estimator $\eta_\ell(u_\ell^k)$ from~\eqref{eq:residualEstimator}.
The two estimators yield nearly indistinguishable results.
All configurations attain the optimal convergence rate $-p/2$ in Figure~\ref{fig:Lshape_comparison:conv} and Figure~\ref{fig:Lshape_comparison:time}.
However, the Ka\v{c}anov iteration considerably reduces the computational cost by requiring fewer linearization iterations per mesh level; see Figure~\ref{fig:Lshape_comparison:steps}.

\begin{figure}
  \centering
  \begin{subfigure}[t]{0.48\textwidth}
    \centering
    \vspace{0pt}
    \begin{tikzpicture}[>=stealth]

  %% READ DATA

  \pgfplotstableread[col sep=comma]{plots/Lshape_comparison/p1_theta50_lambda0.01_delta1_tau0.01_nDofsMax10000000_tol0_scalarProductH1_estimatorTypereconstruction.csv}\PoneHoneR

  \pgfplotstableread[col sep=comma]{plots/Lshape_comparison/p1_theta50_lambda0.01_delta1_tau0.01_nDofsMax10000000_tol0_scalarProductH1_estimatorTypestandard.csv}\PoneHoneS

  \pgfplotstableread[col sep=comma]{plots/Lshape_comparison/p1_theta50_lambda0.01_delta1_tau0.01_nDofsMax10000000_tol0_scalarProductkacanov_estimatorTypereconstruction.csv}\PoneKacR

  \pgfplotstableread[col sep=comma]{plots/Lshape_comparison/p1_theta50_lambda0.01_delta1_tau0.01_nDofsMax10000000_tol0_scalarProductkacanov_estimatorTypestandard.csv}\PoneKacS

  \pgfplotstableread[col sep=comma]{plots/Lshape_comparison/p3_theta50_lambda0.01_delta1_tau0.01_nDofsMax10000000_tol0_scalarProductH1_estimatorTypereconstruction.csv}\PthreeHoneR

  \pgfplotstableread[col sep=comma]{plots/Lshape_comparison/p3_theta50_lambda0.01_delta1_tau0.01_nDofsMax10000000_tol0_scalarProductH1_estimatorTypestandard.csv}\PthreeHoneS

  \pgfplotstableread[col sep=comma]{plots/Lshape_comparison/p3_theta50_lambda0.01_delta1_tau0.01_nDofsMax10000000_tol0_scalarProductkacanov_estimatorTypereconstruction.csv}\PthreeKacR

  \pgfplotstableread[col sep=comma]{plots/Lshape_comparison/p3_theta50_lambda0.01_delta1_tau0.01_nDofsMax10000000_tol0_scalarProductkacanov_estimatorTypestandard.csv}\PthreeKacS

  %% PLOT
  \begin{loglogaxis}[%
      width            = \convergenceWidth,%
      xlabel           = {$\sum_{\substack{(\ell^\prime, k^\prime)\in\QQ \\ \abs{\ell^\prime, k^\prime}\leq \abs{\ell, \kk}}}\dim\XX_{\ell^\prime}$},%
      xlabel style     = {text depth=2.4ex},%
      ylabel           = {error},%
      ymajorgrids      = true,%
      xmax=9.9e9,%
      ymax=0.95,%
      font             = \footnotesize,%
      grid style       = {%
          densely dotted,%
          semithick%
        },%
      legend style     = {%
          legend pos  = south west,%
          font = \scriptsize%
        },%
    ]

    %% PLOT DATA

    \addplot+ [p1L2, iterative, forget plot, mark options={fill opacity=0}]
    table [x={work}, y={H1Error}] {\PoneHoneR}; \label{PoneHoneR}

    \addplot+ [p1L3, iterative, forget plot, mark options={fill opacity=0}]
    table [x={work}, y={H1Error}] {\PoneHoneS}; \label{PoneHoneS}

    \addplot+ [p1L2, iterative, forget plot]
    table [x={work}, y={H1Error}] {\PoneKacR}; \label{PoneKacR}

    \addplot+ [p1L3, iterative, forget plot]
    table [x={work}, y={H1Error}] {\PoneKacS}; \label{PoneKacS}

    \addplot+ [p2L3, iterative, forget plot, mark options={fill opacity=0}]
    table [x={work}, y={H1Error}] {\PthreeHoneR}; \label{PthreeHoneR}

    \addplot+ [p2L2, iterative, forget plot, mark options={fill opacity=0}]
    table [x={work}, y={H1Error}] {\PthreeHoneS}; \label{PthreeHoneS}

    \addplot+ [p2L3, iterative, forget plot]
    table [x={work}, y={H1Error}] {\PthreeKacR}; \label{PthreeKacR}

    \addplot+ [p2L2, iterative, forget plot]
    table [x={work}, y={H1Error}] {\PthreeKacS}; \label{PthreeKacS}

    %% SLOPE TRIANGLE
    \drawswappedslopetriangle{0.5}{3e8}{3e-3}
    \drawslopetriangle{1.5}{1e7}{1e-8}%Kacanov
    \drawswappedslopetriangle{1.5}{1.9e9}{9e-7}%Zarantonello

  \end{loglogaxis}
\end{tikzpicture}
    \caption{cumulative number of degrees of freedom}
    \label{fig:Lshape_comparison:conv}
  \end{subfigure}
  \hfill
      \begin{subfigure}[t]{0.48\textwidth}
    \centering
    \vspace{0pt}
    \begin{tikzpicture}[>=stealth]

  %% READ DATA

  \pgfplotstableread[col sep=comma]{plots/Lshape_comparison/p1_theta50_lambda0.01_delta1_tau0.01_nDofsMax10000000_tol0_scalarProductH1_estimatorTypereconstruction.csv}\PoneHoneR

  \pgfplotstableread[col sep=comma]{plots/Lshape_comparison/p1_theta50_lambda0.01_delta1_tau0.01_nDofsMax10000000_tol0_scalarProductH1_estimatorTypestandard.csv}\PoneHoneS

  \pgfplotstableread[col sep=comma]{plots/Lshape_comparison/p1_theta50_lambda0.01_delta1_tau0.01_nDofsMax10000000_tol0_scalarProductkacanov_estimatorTypereconstruction.csv}\PoneKacR

  \pgfplotstableread[col sep=comma]{plots/Lshape_comparison/p1_theta50_lambda0.01_delta1_tau0.01_nDofsMax10000000_tol0_scalarProductkacanov_estimatorTypestandard.csv}\PoneKacS

  \pgfplotstableread[col sep=comma]{plots/Lshape_comparison/p3_theta50_lambda0.01_delta1_tau0.01_nDofsMax10000000_tol0_scalarProductH1_estimatorTypereconstruction.csv}\PthreeHoneR

  \pgfplotstableread[col sep=comma]{plots/Lshape_comparison/p3_theta50_lambda0.01_delta1_tau0.01_nDofsMax10000000_tol0_scalarProductH1_estimatorTypestandard.csv}\PthreeHoneS

  \pgfplotstableread[col sep=comma]{plots/Lshape_comparison/p3_theta50_lambda0.01_delta1_tau0.01_nDofsMax10000000_tol0_scalarProductkacanov_estimatorTypereconstruction.csv}\PthreeKacR

  \pgfplotstableread[col sep=comma]{plots/Lshape_comparison/p3_theta50_lambda0.01_delta1_tau0.01_nDofsMax10000000_tol0_scalarProductkacanov_estimatorTypestandard.csv}\PthreeKacS

  %% PLOT
  \begin{loglogaxis}[%
      width            = \convergenceWidth,%
      xlabel           = {cumulative runtime [s]},%
      xlabel style     = {text depth=2.4ex},%
      ylabel           = {error},%
      ymajorgrids      = true,%
      ymax=0.95,%
      font             = \footnotesize,%
      grid style       = {%
          densely dotted,%
          semithick%
        },%
      legend style     = {%
          legend pos  = south west,%
          font = \scriptsize%
        },%
    ]

    %% PLOT DATA

    \addplot+ [p1L2, iterative, forget plot, mark options={fill opacity=0}]
    table [x={runtime}, y={H1Error}] {\PoneHoneR};

    \addplot+ [p1L3, iterative, forget plot, mark options={fill opacity=0}]
    table [x={runtime}, y={H1Error}] {\PoneHoneS};

    \addplot+ [p1L2, iterative, forget plot]
    table [x={runtime}, y={H1Error}] {\PoneKacR};

    \addplot+ [p1L3, iterative, forget plot]
    table [x={runtime}, y={H1Error}] {\PoneKacS};

    \addplot+ [p2L3, iterative, forget plot, mark options={fill opacity=0}]
    table [x={runtime}, y={H1Error}] {\PthreeHoneR};

    \addplot+ [p2L2, iterative, forget plot, mark options={fill opacity=0}]
    table [x={runtime}, y={H1Error}] {\PthreeHoneS};

    \addplot+ [p2L3, iterative, forget plot]
    table [x={runtime}, y={H1Error}] {\PthreeKacR};

    \addplot+ [p2L2, iterative, forget plot]
    table [x={runtime}, y={H1Error}] {\PthreeKacS};

    %% SLOPE TRIANGLE
    \drawswappedslopetriangle{0.5}{2e4}{3e-3}
    \drawslopetriangle{1.5}{5e2}{1e-8}%Kacanov
    \drawswappedslopetriangle{1.5}{7e4}{2e-6}%Zarantonello

  \end{loglogaxis}
\end{tikzpicture}
    \caption{cumulative runtime}
    \label{fig:Lshape_comparison:time}
  \end{subfigure}

  \bigskip
    \begin{subfigure}[b]{0.48\textwidth}
    \centering
    \vspace{0pt}
    \begin{tikzpicture}[>=stealth]

  %% READ DATA

  \pgfplotstableread[col sep=comma]{plots/Lshape_comparison/p1_theta50_lambda0.01_delta1_tau0.01_nDofsMax10000000_tol0_scalarProductH1_estimatorTypereconstruction.csv}\PoneHoneR

  \pgfplotstableread[col sep=comma]{plots/Lshape_comparison/p1_theta50_lambda0.01_delta1_tau0.01_nDofsMax10000000_tol0_scalarProductH1_estimatorTypestandard.csv}\PoneHoneS

  \pgfplotstableread[col sep=comma]{plots/Lshape_comparison/p1_theta50_lambda0.01_delta1_tau0.01_nDofsMax10000000_tol0_scalarProductkacanov_estimatorTypereconstruction.csv}\PoneKacR

  \pgfplotstableread[col sep=comma]{plots/Lshape_comparison/p1_theta50_lambda0.01_delta1_tau0.01_nDofsMax10000000_tol0_scalarProductkacanov_estimatorTypestandard.csv}\PoneKacS

  \pgfplotstableread[col sep=comma]{plots/Lshape_comparison/p3_theta50_lambda0.01_delta1_tau0.01_nDofsMax10000000_tol0_scalarProductH1_estimatorTypereconstruction.csv}\PthreeHoneR

  \pgfplotstableread[col sep=comma]{plots/Lshape_comparison/p3_theta50_lambda0.01_delta1_tau0.01_nDofsMax10000000_tol0_scalarProductH1_estimatorTypestandard.csv}\PthreeHoneS

  \pgfplotstableread[col sep=comma]{plots/Lshape_comparison/p3_theta50_lambda0.01_delta1_tau0.01_nDofsMax10000000_tol0_scalarProductkacanov_estimatorTypereconstruction.csv}\PthreeKacR

  \pgfplotstableread[col sep=comma]{plots/Lshape_comparison/p3_theta50_lambda0.01_delta1_tau0.01_nDofsMax10000000_tol0_scalarProductkacanov_estimatorTypestandard.csv}\PthreeKacS

  %% PLOT
  \begin{semilogxaxis}[%
      width            = \convergenceWidth,%
      xlabel           = {$\sum_{\substack{(\ell^\prime, k^\prime)\in\QQ \\ \abs{\ell^\prime, k^\prime}\leq \abs{\ell, \kk}}}\dim\XX_{\ell^\prime}$},%
      xlabel style     = {text depth=2.4ex},%
      ylabel           = {number of iterations},%
      ymajorgrids      = true,%
      font             = \footnotesize,%
      xmax             = 9.9e9,%
      grid style       = {%
          densely dotted,%
          semithick%
        },%
      legend style     = {%
          legend pos  = north east,%
          font = \scriptsize%
        },%
    ]

    %% PLOT DATA
    \addplot+ [p1L2, iterative, forget plot, mark options={fill opacity=0}]
    table [x={work}, y={ZarIterations}] {\PoneHoneR};

    \addplot+ [p1L3, iterative, forget plot, mark options={fill opacity=0}]
    table [x={work}, y={ZarIterations}] {\PoneHoneS};

    \addplot+ [p1L2, iterative, forget plot]
    table [x={work}, y={ZarIterations}] {\PoneKacR};

    \addplot+ [p1L3, iterative, forget plot]
    table [x={work}, y={ZarIterations}] {\PoneKacS};

    \addplot+ [p2L3, iterative, forget plot, mark options={fill opacity=0}]
    table [x={work}, y={ZarIterations}] {\PthreeHoneR};

    \addplot+ [p2L2, iterative, forget plot, mark options={fill opacity=0}]
    table [x={work}, y={ZarIterations}] {\PthreeHoneS};

    \addplot+ [p2L3, iterative, forget plot]
    table [x={work}, y={ZarIterations}] {\PthreeKacR};

    \addplot+ [p2L2, iterative, forget plot]
    table [x={work}, y={ZarIterations}] {\PthreeKacS};

  \end{semilogxaxis}
\end{tikzpicture}
    \caption{number $\kk[\ell]$ of linearization iterations}
    \label{fig:Lshape_comparison:steps}
  \end{subfigure}
  \hfill
  \begin{subfigure}[b]{0.48\textwidth}
    \centering
    \vspace{0pt}
    \begin{tikzpicture}[>=stealth]

  \matrix(m) [
    matrix of nodes,
    nodes in empty cells,
    anchor = center,
    font = \scriptsize,
    column 1/.style={anchor=base east},
  ] at (0,0) {
     &                                   &  &  &
    \\
     & $\zeta_\ell$
     & $\eta_\ell$
     & $\zeta_\ell$
     & $\eta_\ell$
    \\
    \(a_{H^1}(\,\cdot\,,\,\cdot\,)\)
     & \ref*{PoneHoneR}
     & \ref*{PoneHoneS}
     & \ref*{PthreeHoneR}
     & \ref*{PthreeHoneS}
    \\
    \(a_{(\ell,k)}(\,\cdot\,,\,\cdot\,)\)
     & \ref*{PoneKacR}
     & \ref*{PoneKacS}
     & \ref*{PthreeKacR}
     & \ref*{PthreeKacS}
    \\
  };

  \node[font = \scriptsize\bfseries, anchor=base] at ($(m-1-2)!0.5!(m-1-3)$) {$p=1$};
  \node[font = \scriptsize\bfseries, anchor=base] at ($(m-1-4)!0.5!(m-1-5)$) {$p=3$};
  \draw[line width=\arrayrulewidth, color=black] (m.south west |- m-2-1.south west) -- (m-2-5.south east);
  \draw[line width=\arrayrulewidth]
  ([xshift=-6pt]m-1-2.south west) -- ([xshift=6pt]m-1-3.south east);
  \draw[line width=\arrayrulewidth]
  ([xshift=-6pt]m-1-4.south west) -- ([xshift=6pt]m-1-5.south east);

\end{tikzpicture}
    \caption{legend}
    \label{fig:Lshape_comparison:legend}
  \end{subfigure}
  \caption{Convergence history of the error
    $\protect\norm{\nabla(\starred{u}-u_\ell^\kk)}_{L^2(\Omega)}$ and the number of linearization iterations for the scalar products $a_{H^1}(\,\cdot\,,\,\cdot\,)$ and $a_{(\ell,k)}(\,\cdot\,,\,\cdot\,)$ and various polynomial degrees~$p$ for Benchmark~\ref{subsec:mv23-short}.
    The algorithm is steered by either the reconstruction estimator $\zeta_\ell(u_\ell^{k-1}; \starred{z}_\ell^k)$ from~\eqref{eq:ellipticReconstructionEstimator} or by the standard estimator $\eta_\ell(u_\ell^k)$ from~\eqref{eq:residualEstimator}.
    The parameters are set to $u_0^0=0, \theta=0.5, \protect\Cmark=1, \lctr=0.01$, and $\delta=1$.
  }
  \label{fig:Lshape_comparison}
\end{figure}

With $\cost(\ell,k)$ from~\eqref{eq:costDef}, Table~\ref{table:parameterStudy} reports the error-weighted cumulative computational cost
\begin{equation}\label{eq:weightedCost}
  \norm{\nabla(\starred{u}-u_\ell^k)}_{L^2(\Omega)}\cost(\ell,k)^{1/2}
\end{equation}
together with the average number of linearization iterations $\kk[\ell]$ on the last three mesh levels for the three scalar products $a_{H^1}(\,\cdot\,,\,\cdot\,)$, $a_\mu(\,\cdot\,,\,\cdot\,)$, and $a_{(\ell,k)}(\,\cdot\,,\,\cdot\,)$ for different values of $\lambda$ and $\delta$.
For all scalar products, the weighted computational cost decreases as $\delta$ increases, although the smallest tested value, $\delta=0.1$, already exceeds the theoretical upper bound ensuring contraction for $a_{H^1}(\,\cdot\,,\,\cdot\,)$, namely $\delta=2\alpha/L^2=0.02$.
Furthermore, larger values of $\delta$ and $\lambda$ lead to a smaller number of linearization iterations $\kk[\ell]$.
Moreover, for every tested pair of parameters $\lambda$ and $\delta$, the $\mu$-weighted scalar products $a_\mu(\,\cdot\,,\,\cdot\,)$ and $a_{(\ell,k)}(\,\cdot\,,\,\cdot\,)$ yield smaller weighted costs than $a_{H^1}(\,\cdot\,,\,\cdot\,)$.
The performance of $a_\mu(\,\cdot\,,\,\cdot\,)$ and $a_{(\ell,k)}(\,\cdot\,,\,\cdot\,)$ is comparable.
For $a_{H^1}(\,\cdot\,,\,\cdot\,)$, the best results are obtained with $\lambda=0.05$ for all tested values of $\delta$.
In contrast, for the $\mu$-weighted scalar products, larger values of $\lambda$ are preferable, especially once $\delta$ is sufficiently large.
\begin{table}
  \centering
  {\footnotesize
\begin{booktabs}{
  colspec={c|cccc|cccc|cccc},
}
\toprule
scalar product & \SetCell[c=4]{c} $a_{H^1}(\,\cdot\,,\,\cdot\,)$ &&&& \SetCell[c=4]{c} $a_\mu(\,\cdot\,,\,\cdot\,)$ &&&& \SetCell[c=4]{c} $a_{(\ell,k)}(\,\cdot\,,\,\cdot\,)$\\
\midrule
\diagbox[height=1.5\line]{$\lambda$}{$\delta$} & $0.1$ & $0.5$ & $1$ & $1.5$ & $0.1$ & $0.5$ & $1$ & $1.5$ & $0.1$ & $0.5$ & $1$ & $1.5$\\
\midrule
& \SetCell[c=12]{c} error-weighted cumulative computational cost \\
\midrule

0.01 & 19.79 & 8.87 & 6.32 & 5.25 & 12.80 & 5.73 & 4.08 & \hb{3.54} & 12.80 & 5.75 & 4.10 & \hb{3.54} \\
0.05 & \hy{16.87} & \hy{7.48} & \hy{5.27} & \hy{4.22} & 9.19 & 4.18 & 2.93 & \hb{2.81} & 9.35 & 4.21 & 2.95 & 2.82 \\
0.10 & 19.33 & 8.33 & 5.80 & 4.71 & \hy{8.38} & \hy{3.75} & 2.62 & \hb{2.37} & \hy{9.08} & \hy{3.88} & \hy{2.64} & 2.39 \\
0.50 & 58.43 & 25.52 & 18.37 & 8.95 & 12.37 & 4.96 & \hy{2.49} & \hg{1.94} & 16.45 & 5.84 & 3.14 & \hy{2.09} \\
1.00 & 87.93 & 40.48 & 36.14 & 8.95 & 18.73 & 7.49 & \hy{2.49} & \hb{2.04} & 25.19 & 10.87 & 3.14 & \hy{2.09} \\
\midrule
& \SetCell[c=12]{c} average number of linearization iterations $\kk[\ell]$ on the last three levels \\
\midrule
0.01 & 131.33 & 26.67 & 13.67 & 9.33 & 62.00 & 12.67 & 6.33 & 5.33 & 62.33 & 12.67 & 6.33 & 5.33 \\
0.05 & 44.00 & 9.00 & 5.00 & 3.00 & 27.67 & 6.00 & 3.00 & 3.00 & 28.00 & 6.00 & 3.00 & 3.00 \\
0.10 & 28.33 & 6.00 & 3.00 & 2.00 & 18.00 & 4.00 & 2.00 & 2.00 & 18.00 & 4.00 & 2.00 & 2.00 \\
0.50 & 16.67 & 3.33 & 2.00 & 1.00 & 6.67 & 1.33 & 1.00 & 1.00 & 7.00 & 1.33 & 1.00 & 1.00 \\
1.00 & 13.33 & 3.00 & 1.33 & 1.00 & 5.67 & 1.00 & 1.00 & 1.00 & 6.00 & 1.00 & 1.00 & 1.00 \\
\bottomrule
\end{booktabs}
}
  \caption{Error-weighted cumulative computational cost from~\eqref{eq:weightedCost} and average number of linearization iterations $\kk[\ell]$ on the last three mesh levels for Benchmark~\ref{subsec:mv23-short}.
    The results are shown for various choices of $\lambda$ and $\delta$, where the algorithm is terminated once $\norm{\nabla(\starred{u}-u_\ell^\kk)}_{L^2(\Omega)}\le 10^{-2}$ is reached.
    The smallest row-wise and column-wise values of the error-weighted cost are highlighted in blue and yellow, respectively, and in green if both coincide.
    The parameters are set to $u_0^0=0, \theta=0.5, \protect\Cmark=1$, and $p=1$.
  }
  \label{table:parameterStudy}
\end{table}

Finally, Figure~\ref{fig:Lshape_rates} illustrates the results for $\lambda=0.1$ and $\delta=1.5$, which is close to the best-performing choices for all three considered scalar products in Table~\ref{table:parameterStudy}.
For $a_\mu(\,\cdot\,,\,\cdot\,)$ as well as $a_{(\ell,k)}(\,\cdot\,,\,\cdot\,)$, Algorithm~\ref{algorithm} achieves the optimal convergence rate $-1/2$ for the error in Figure~\ref{fig:Lshape_rates:conv}.
Moreover, Figure~\ref{fig:Lshape_rates:steps} shows that only two linearization iterations are performed on each mesh except for the initial one.
In contrast, for $a_{H^1}(\,\cdot\,,\,\cdot\,)$, the same choice $\lambda=0.1$ and $\delta=1.5$ leads to a suboptimal convergence rate of the error in Figure~\ref{fig:Lshape_rates:conv}.
This behavior suggests that, for the choice $\delta=1.5$, the Zarantonello iteration with $a_{H^1}(\,\cdot\,,\,\cdot\,)$ no longer provides a strong contraction $\qctr=\qctr[\delta]$.
Consequently, a smaller parameter $\lctr=\lctr[\qctr]$ may be required; cf.
\ the smallness condition~\eqref{theorem:optimalRates:assumptions} in Theorem~\ref{theorem:optimalRates}.
Undisplayed numerical experiments show optimal convergence rates for $a_{H^1}(\,\cdot\,,\,\cdot\,)$ with $\delta=1.5$ when a smaller value of $\lambda$ is used.
Overall, the results indicate that using a $\mu$-weighted scalar product like $a_\mu(\,\cdot\,,\,\cdot\,)$ or $a_{(\ell,k)}(\,\cdot\,,\,\cdot\,)$ in the Zarantonello iteration~\eqref{eq:zarantonello} improves the performance of the algorithm.
\begin{figure}
  \centering
  \begin{subfigure}[t]{0.48\textwidth}
    \centering
    \vspace{0pt}
    \begin{tikzpicture}[>=stealth]

    %% READ DATA

    \pgfplotstableread[col sep=comma]{plots/Lshape_rates/p1_theta50_lambda0.1_delta1.5_tau0.01_nDofsMax10000000_tol0_scalarProductH1_estimatorTypereconstruction.csv}\Hone

    \pgfplotstableread[col sep=comma]{plots/Lshape_rates/p1_theta50_lambda0.1_delta1.5_tau0.01_nDofsMax10000000_tol0_scalarProductweighted_estimatorTypereconstruction.csv}\weighted

    \pgfplotstableread[col sep=comma]{plots/Lshape_rates/p1_theta50_lambda0.1_delta1.5_tau0.01_nDofsMax10000000_tol0_scalarProductkacanov_estimatorTypereconstruction.csv}\kacanov

    %% PLOT
    \begin{loglogaxis}[%
            width            = \convergenceWidth,%
            xlabel           = {$\sum_{\substack{(\ell^\prime, k^\prime)\in\QQ \\ \abs{\ell^\prime, k^\prime}\leq \abs{\ell, \kk}}}\dim\XX_{\ell^\prime}$},%
            xlabel style     = {text depth=2.4ex},%
            ylabel           = {error},%
            ymajorgrids      = true,%
            font             = \footnotesize,%
            grid style       = {%
                densely dotted,%
                semithick%
            },%
            legend style     = {%
                legend pos  = south west,%
                font = \scriptsize%
            },%
        ]

        \coordinate (legend) at (axis description cs:0.26,0.01);

        %% PLOT DATA
        
        \addplot+ [p1L2, iterative, forget plot]
        table [x={work}, y={H1Error}] {\Hone}; \label{errorH1}

        \addplot+ [p1L3, iterative, forget plot]
        table [x={work}, y={H1Error}] {\weighted}; \label{errorWeighted}

        \addplot+ [p2L2, iterative, forget plot]
        table [x={work}, y={H1Error}] {\kacanov}; \label{errorKacanov}
        
        %% SLOPE TRIANGLE
        \drawswappedslopetriangle{0.26}{8.2e7}{5e-3}
        \drawslopetriangle{0.5}{2e6}{4.2e-4}

    \end{loglogaxis}

    \matrix [
    matrix of nodes,
    anchor=south,
    font=\scriptsize,
    line join = round,
    line cap=round
    ] at (legend) {
        \ref{errorH1} & $a_{H^1}(\,\cdot\,,\,\cdot\,)$ \\
        \ref{errorWeighted} & $a_{\mu}(\,\cdot\,,\,\cdot\,)$ \\
        \ref{errorKacanov} & $a_{(\ell, k)}(\,\cdot\,,\,\cdot\,)$ \\
    };
\end{tikzpicture}
    \caption{convergence history}
    \label{fig:Lshape_rates:conv}
  \end{subfigure}
  \hfill
  \begin{subfigure}[t]{0.48\textwidth}
    \centering
    \vspace{0pt}
    \begin{tikzpicture}[>=stealth]

    %% READ DATA

    \pgfplotstableread[col sep=comma]{plots/Lshape_rates/p1_theta50_lambda0.1_delta1.5_tau0.01_nDofsMax10000000_tol0_scalarProductH1_estimatorTypereconstruction.csv}\Hone

    \pgfplotstableread[col sep=comma]{plots/Lshape_rates/p1_theta50_lambda0.1_delta1.5_tau0.01_nDofsMax10000000_tol0_scalarProductweighted_estimatorTypereconstruction.csv}\weighted

    \pgfplotstableread[col sep=comma]{plots/Lshape_rates/p1_theta50_lambda0.1_delta1.5_tau0.01_nDofsMax10000000_tol0_scalarProductkacanov_estimatorTypereconstruction.csv}\kacanov

    %% PLOT
    \begin{semilogxaxis}[%
            width            = \convergenceWidth,%
            xlabel           = {$\sum_{\substack{(\ell^\prime, k^\prime)\in\QQ \\ \abs{\ell^\prime, k^\prime}\leq \abs{\ell, \kk}}}\dim\XX_{\ell^\prime}$},%
            xlabel style     = {text depth=2.4ex},%
            ylabel           = {number of iterations},%
            ymajorgrids      = true,%
            font             = \footnotesize,%
            grid style       = {%
                densely dotted,%
                semithick%
            },%
            legend style     = {%
                legend pos  = north east,%
                font = \scriptsize%
            },%
        ]

        \coordinate (legend) at (axis description cs:0.48,0.58);

        %% PLOT DATA

        \addplot+ [p1L2, iterative, forget plot]
        table [x={work}, y={ZarIterations}] {\Hone}; \label{H1}

        \addplot+ [p1L3, iterative, forget plot]
        table [x={work}, y={ZarIterations}] {\weighted}; \label{weighted}

        \addplot+ [p2L2, iterative, forget plot]
        table [x={work}, y={ZarIterations}] {\kacanov}; \label{kacanov}

    \end{semilogxaxis}

    \matrix [
    matrix of nodes,
    anchor=south,
    font=\scriptsize,
    line join = round,
    line cap=round
    ] at (legend) {
        \ref{errorH1} & $a_{H^1}(\,\cdot\,,\,\cdot\,)$ \\
        \ref{errorWeighted} & $a_{\mu}(\,\cdot\,,\,\cdot\,)$ \\
        \ref{errorKacanov} & $a_{(\ell, k)}(\,\cdot\,,\,\cdot\,)$ \\
    };
\end{tikzpicture}
    \caption{number $\kk[\ell]$ of linearization iterations}
    \label{fig:Lshape_rates:steps}
  \end{subfigure}
  \caption{Convergence history (left) of the error
    $\protect\norm{\nabla(\starred{u}-u_\ell^\kk)}_{L^2(\Omega)}$ and the number of linearization iterations (right) for the scalar products $a_{H^1}(\,\cdot\,,\,\cdot\,)$ (circle), $a_\mu(\,\cdot\,,\,\cdot\,)$ (triangle), and $a_{(\ell,k)}(\,\cdot\,,\,\cdot\,)$ (square) for Benchmark~\ref{subsec:mv23-short}.
    The parameters are set to $u_0^0=0, \theta=0.5, \protect\Cmark=1, \lctr=0.1, \delta=1.5$, and $p=1$.
  }
  \label{fig:Lshape_rates}
\end{figure}

% ------------------------------------------------------------------------------
%% Bibliography
% ------------------------------------------------------------------------------
{
\sloppy
\printbibliography
}

% ------------------------------------------------------------------------------
%% Appendices
% ------------------------------------------------------------------------------

\appendix
\section{Sharp monotonicity and Lipschitz constants}
\label{appendix:sharpLipschitz}

Under the growth condition~\eqref{assumption:mu},~\cite[Lemma~25.26]{zeidler} shows that the operator $\AA$ from~\eqref{eq:nonlinearPDE} satisfies~\eqref{assumption:BM} with $\alpha=\widetilde{\alpha}$ and the non-sharp Lipschitz constant $L=3\widetilde{L}$ with respect to $\enorm{\,\cdot\,}=\norm{\nabla (\,\cdot \,)}_{L^2(\Omega)}$.
The following lemma improves the Lipschitz constant to the sharp value $L=\widetilde{L}$.

\begin{lemma}\label{lemma:sharp-lipschitz-radial}
  Let $\mu\colon[0, +\infty)\to [0, +\infty)$ satisfy, for constants $0<\widetilde{\alpha}\leq \widetilde{L}$, the growth condition
  \begin{equation}\label{eq:GCmu}
    \widetilde{\alpha} \, (t - s) \leq \mu(t^2) t - \mu(s^2) s \leq \widetilde{L} \, (t - s)
    \quad \text{for all } 0 \leq s \leq t.
  \end{equation}
  Then, $F\colon\R^d\to\R^d$, $F(\xi)\coloneq\mu(|\xi|^2)\xi$
  satisfies
  \begin{equation}\label{eq:F_BM}
    \widetilde{\alpha}\,\abs{\xi-\chi}^2\leq\bigl[F(\xi)-F(\chi)\bigr]\cdot(\xi-\chi)
    \quad\text{ and }\quad
    \abs{F(\xi)-F(\chi)}\leq \widetilde L\,\abs{\xi-\chi}
    \quad\text{ for all }\xi,\chi\in\mathbb R^d.
  \end{equation}
  Consequently, the induced PDE operator $\AA\colon H_0^1(\Omega)\to H^{-1}(\Omega)$,
  $\dual{\AA u}{v}\coloneq\dual{F(\nabla u)}{\nabla v}_{L^2(\Omega)}$ satisfies, for all $u,v,w\in H_0^1(\Omega)$,
  \begin{equation*}
    \widetilde{\alpha} \,
    \norm{\nabla(v-w)}_{L^2(\Omega)}^2
    \leq
    \dual{\AA v - \AA w}{v - w}
    \quad\text{ and }\quad
    \dual{\AA v - \AA w}{u}
    \le \widetilde{L} \, \norm{\nabla(v-w)}_{L^2(\Omega)} \, \norm{\nabla u}_{L^2(\Omega)}.
  \end{equation*}
\end{lemma}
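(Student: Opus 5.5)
The plan is to first establish the pointwise estimates \eqref{eq:F_BM} for $F(\xi)=\mu(|\xi|^2)\xi$. The operator statement then follows by integration. Write $g(t)\coloneq\mu(t^2)\,t$ for $t\ge0$. By \eqref{eq:GCmu}, $g$ is nondecreasing with $g(0)=0$, and $g'(t)\in[\widetilde\alpha,\widetilde L]$ wherever it exists; since $\mu\in C^1$, it exists everywhere. Also $F(\xi)=g(|\xi|)\,\xi/|\xi|$. Dividing \eqref{eq:GCmu} with $s=0$ by $t$ gives $\widetilde\alpha\le\mu(t^2)\le\widetilde L$. The Jacobian for $\xi\neq0$ is
\begin{equation*}
  DF(\xi)=\mu(|\xi|^2)\,\Bigl(I-\tfrac{\xi\xi^\intercal}{|\xi|^2}\Bigr)+g'(|\xi|)\,\tfrac{\xi\xi^\intercal}{|\xi|^2}.
\end{equation*}
This is a symmetric matrix. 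Its eigenvalues are $\mu(|\xi|^2)$, with multiplicity $d-1$ on $\xi^\perp$, and $g'(|\xi|)$ in the direction of $\xi$. Both lie in $[\widetilde\alpha,\widetilde L]$.

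With this Jacobian at hand, I will integrate along the segment $\gamma(\tau)=\chi+\tau(\xi-\chi)$. This gives
\begin{equation*}
  F(\xi)-F(\chi)=\int_0^1 DF(\gamma(\tau))\,\mathrm d\tau\,(\xi-\chi)\eqqcolon M(\xi-\chi).
\end{equation*}
The only subtlety is a segment passing through the origin, where $DF$ may fail to exist. This happens at most at one parameter $\tau$, and $F$ is Lipschitz continuous near $0$ because $|F(\xi)|\le\widetilde L|\xi|$, so $\tau\mapsto F(\gamma(\tau))$ is absolutely continuous. Hence the integral representation still holds; alternatively, one can perturb $\chi$ slightly and argue by continuity. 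The matrix $M$ is an average of symmetric matrices whose spectra lie in $[\widetilde\alpha,\widetilde L]$, so $M$ is symmetric with $\widetilde\alpha I\le M\le\widetilde L I$. This yields both $(M v)\cdot v\ge\widetilde\alpha|v|^2$ and $|Mv|\le\widetilde L|v|$ with $v=\xi-\chi$, which is \eqref{eq:F_BM}. Symmetry of $M$ is exactly what gives the sharp Lipschitz constant: the operator norm of a symmetric matrix equals its spectral radius, whereas the cruder argument behind \cite[Lemma~25.26]{zeidler} produces $3\widetilde L$.

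For the operator, I will integrate the pointwise bounds over $\Omega$. Monotonicity follows directly:
\begin{equation*}
  \dual{\AA v-\AA w}{v-w}=\int_\Omega[F(\nabla v)-F(\nabla w)]\cdot\nabla(v-w)\,\mathrm dx\ge\widetilde\alpha\,\norm{\nabla(v-w)}_{L^2(\Omega)}^2.
\end{equation*}
For the Lipschitz bound, I will apply the Cauchy--Schwarz inequality first pointwise and then in $L^2(\Omega)$, using $|F(\nabla v)-F(\nabla w)|\le\widetilde L|\nabla(v-w)|$.

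The main obstacle is justifying the derivative formula and its eigenvalue bounds, and in particular the claim $g'\in[\widetilde\alpha,\widetilde L]$. This follows from the difference quotients in \eqref{eq:GCmu} together with the differentiability of $g$, which holds since $\mu\in C^1$. The segment through the origin is handled as described above.
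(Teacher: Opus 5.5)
Your argument is correct, given the standing assumption from the introduction that $\mu$ is continuously differentiable, but it takes a different route from the paper.

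The paper never differentiates. For the Lipschitz bound it writes $\xi=s\sigma$, $\chi=t\rho$ with $|\sigma|=|\rho|=1$ and uses the identity $|g(s)\sigma-g(t)\rho|^2=|g(s)-g(t)|^2+g(s)g(t)\,|\sigma-\rho|^2$. It bounds the two terms by $\widetilde{L}^2|s-t|^2$ and $\widetilde{L}^2 st\,|\sigma-\rho|^2$, whose sum is again $\widetilde{L}^2|s\sigma-t\rho|^2$ by the same identity. The monotonicity estimate is simply quoted from Zeidler. That proof uses only $|g(s)-g(t)|\le\widetilde{L}|s-t|$ and $0\le g(t)\le\widetilde{L}t$. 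It therefore covers the lemma exactly as stated, which assumes no differentiability of $\mu$.

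Your averaged-Jacobian argument delivers both inequalities at once from the spectrum of the symmetric matrix $DF$, so you do not need to cite Zeidler for monotonicity. It also explains conceptually why the sharp constant is $\widetilde{L}$: $F$ is the gradient of a convex potential, so the averaged Jacobian is symmetric and its norm equals its largest eigenvalue. The price is that it genuinely relies on $\mu\in C^1$. For a $\mu$ satisfying only the growth condition, you would need Lipschitz continuity of $g$ plus an a.e.\ chain-rule argument along the segment.

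A minor point concerns the origin. With $\mu\in C^1([0,\infty))$, the map $F$ is $C^1$ on all of $\mathbb{R}^d$, with $DF(\xi)=\mu(|\xi|^2)I+2\mu'(|\xi|^2)\,\xi\xi^\intercal$ and $DF(0)=\mu(0)I$. Segments through the origin therefore need no special care. This is fortunate, because your justification there is not valid by itself: $|F(\xi)|\le\widetilde{L}|\xi|$ does not make $F$ Lipschitz near $0$. Your perturbation alternative also fails in $d=1$, where a segment joining points of opposite sign always passes through $0$.
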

\begin{proof}
  The monotonicity estimate in~\eqref{eq:F_BM} follows from~\cite[Lemma~25.26]{zeidler}.
  To establish the Lipschitz estimate, set $g(t)\coloneq \mu(t^2)t$.
  Then,~\eqref{eq:GCmu} gives
  \begin{equation}\label{eq:GCg}
    \abs{g(s)-g(t)} \le \widetilde{L}\,\abs{s-t}
    \quad\text{ and }\quad
    0\leq g(t)\leq\widetilde{L}\,t
    \quad \text{for all } s,t \geq 0.
  \end{equation}
  For $\xi,\chi\in\R^d$, write $\xi=s\sigma,\chi=t\rho$ with $s,t\ge0$ and $|\sigma|=|\rho|=1$.
  Using the identity
  \begin{equation}\label{eq:GCaux}
    |\sigma-\rho|^2=2-2\sigma\cdot \rho
  \end{equation}
  together with~\eqref{eq:GCg}, it follows that
  \begin{equation*}\begin{aligned}
       & |F(\xi)-F(\chi)|^2
      = |g(s)\sigma-g(t)\rho|^2
      = g(s)^2-2g(s)g(t)\sigma\cdot \rho+g(t)^2                                                                                                     \\
       & \eqreff{eq:GCaux}{=} |g(s)-g(t)|^2 + g(s)g(t)|\sigma-\rho|^2 \eqreff{eq:GCg}{\leq} \widetilde{L}^2 \bigl[|s-t|^2+ st |\sigma-\rho|^2\bigr]
      \eqreff{eq:GCaux}{=}\widetilde{L}^2 |s\sigma-t\rho|^2
      = \widetilde{L}^2 |\xi-\chi|^2 .
    \end{aligned}
  \end{equation*}
  Thus, $F$ is Lipschitz continuous with constant $\widetilde L$.
  The estimates for $\AA$ follow from~\eqref{eq:F_BM} by integration and the Cauchy--Schwarz inequality.
  This concludes the proof.
\end{proof}

\section{Proof of the Dörfler criterion~\texorpdfstring{\eqref{lemma:doerflerSufficient:eq1}}{(31)}}
\label{subsec:DoerflerSufficient}

\begin{proof}[Proof of~\eqref{lemma:doerflerSufficient:eq1}]
  The Dörfler criterion~\eqref{algorithm:doerfler} for $\zeta_\ell$, equivalence~\eqref{eq:estimatorEquivalence}, and the stopping criterion~\eqref{algorithm:stoppingCriterion} yield
  \begin{equation*}\begin{aligned}
      \theta^{1/2} \zeta_\ell(u_{\ell}^{\underline k -1}; \starred{z}_\ell^{\underline k})
       & \eqreff*{algorithm:doerfler}{\:\leq\:}\zeta_\ell(u_{\ell}^{\underline k -1};\MM_\ell, \starred{z}_\ell^{\underline k})     \\
       & \eqreff*{eq:estimatorEquivalence}{\:\leq\:}\eta_\ell(\MM_\ell, u_\ell^\kk) + (1+\delta)\Ceqv\,\enorm{\starred{z}_\ell^\kk}
      \eqreff{algorithm:stoppingCriterion}{\leq}\eta_\ell(\MM_\ell, u_\ell^\kk) + (1+\delta)\Ceqv\lctr\,\zeta_\ell(u_{\ell}^{\underline k -1}; \starred{z}_\ell^{\underline k}).
    \end{aligned}\end{equation*}
  This implies that
  \begin{equation}\label{eq:doerflerSufficient:auxiliary1}
    \big[\theta^{1/2} - (1+\delta)\Ceqv\lctr\big] \, \zeta_\ell(u_{\ell}^{\underline k -1}; \starred{z}_\ell^{\underline k})
    \le\eta_\ell(\MM_\ell, u_\ell^{\underline k}).
  \end{equation}
  The estimator equivalence~\eqref{eq:estimatorEquivalence} and the stopping criterion~\eqref{algorithm:stoppingCriterion} further verify
  \begin{equation}\label{eq:doerflerIsSufficient:auxiliary2}
    \begin{aligned}
      \eta_\ell(u_\ell^{\underline k})
      \eqreff{eq:estimatorEquivalence}\le
      \zeta_\ell(u_{\ell}^{\underline k -1}; \starred{z}_\ell^{\underline k}) + (1+\delta)\Ceqv \, \enorm{\starred{z}_{\ell}^{\underline k}}
      \eqreff{algorithm:stoppingCriterion}\le
      \big[1 + (1+\delta)\Ceqv \lctr\big] \, \zeta_\ell(u_{\ell}^{\underline k -1}; \starred{z}_\ell^{\underline k}).
    \end{aligned}
  \end{equation}
  The combination of~\eqref{eq:doerflerSufficient:auxiliary1}--\eqref{eq:doerflerIsSufficient:auxiliary2} concludes the proof of~\eqref{lemma:doerflerSufficient:eq1}.
\end{proof}

\end{document}